\newtheorem{theorem}{Theorem}[subsection]
\newtheorem{lemma}[theorem]{Lemma}
\newtheorem{corollary}[theorem]{Corollary}
\newtheorem{proposition}[theorem]{Proposition}
\newtheorem{remark}{Remark}[section]
\newtheorem{example}[remark]{Example}
\begin{document}

\title[Non-convergence to unstable equilibriums]{Non-convergence to
  unstable equilibriums for continuous-time and discrete-time stochastic
  processes}

\author{Olivier Raimond} \address{Laboratoire MODAL'X, Université Paris
  Nanterre, 200, avenue de la République - 92001 Nanterre cedex -
  France} \email{oraimond@parisnanterre.fr}
\urladdr{http://raimond.perso.math.cnrs.fr/}

\author{Pierre Tarrès}
\address{NYU-ECNU Institute of Mathematical Sciences at NYU Shanghai, China}
\email{tarres@nyu.edu}
\urladdr{}

\keywords{Stochastic approximation; dynamical systems; traps; unstable
  equilibrium; reinforced random walk}
%

\subjclass[2010]{60G99, 62L20, 34E10}
%

\begin{abstract}
  We prove non-convergence theorems towards an unstable equilibrium (or
  a trap) for stochastic processes. The processes we consider are
  continuous-time or discrete-time processes and can be pertubations of
  the flow generated by a vector field. Our results extend previous
  results given for discrete-time processes by O. Brandière and M. Duflo
  in \cite{Brandiere96,Duflo1996}, by R. Pemantle in \cite{Pemantle90}
  and by P. Tarrès in \cite{Tarres00}. We correct and give a correct
  formulation to some theorems stated in
  \cite{Brandiere96,Duflo1996}. The method used to prove some of our
  theorems follow a method introduced by P. Tarrès in
  \cite{Tarres00}. Finally our non-convergence theorems are applied to
  give correct proofs of the non-convergence towards traps for the
  empirical measure of vertex reinforced random walks in \cite{Benaim13}
  and for non-backtracking vertex reinforced random walks in
  \cite{LR18}.
\end{abstract}

\maketitle

\tableofcontents

\section*{Notation}
The continuous-time stochastic processes we will consider will be
constructed on a filtered probability space
$(\Omega,\mathcal{F}, (\mathcal{F}_t)_{t\ge 0},\mathbb{P})$, with
$(\mathcal{F}_t)_{t\ge 0}$ a complete right-continuous filtration.

For $Z$ a càdlàg process in $\mathbb{R}^d$, we will set
$Z_{t-}:=\lim_{s\uparrow t} Z_s$ the left limit of $Z$ at $t>0$,
$\Delta Z_t=Z_t-Z_{t-}$ the jump of $Z$ at $t>0$, and $Z_{s,t}=Z_t-Z_s$
the increment of $Z$ between $s$ and $t$.

The total variation on an interval $(s,t]$ of a càdlàg real-valued
process $Z$ will be denoted by $V(Z,(s,t])$ and is defined by
\[
  V(Z,(s,t])=\sup_{\mathcal{P}}\sum_{i=1}^n |Z_{t_i}-Z_{t_{i-1}}|
\]
where the supremum is taken other all partitions
$\mathcal{P}=\{s=t_0<t_1<\dots<t_n=t\}$ of $(s,t]$.  Then
$t\mapsto V(Z,(0,t])$ is càdlàg and defines a measure on $(0,\infty)$.
If $V(Z,(0,t])<\infty$ for all $t$, then $Z$ will be said to have finite
variations.

The quadratic variation on an interval $(s,t]$ of a càdlàg
semimartingale $Z$ is denoted $[Z]_{s,t}$ and is defined as the limit
in probability of $\sum_{i=1}^n |Z_{t_i}-Z_{t_{i-1}}|^2$ as the mesh
of the partition $\mathcal{P}=\{s=t_0<t_1<\dots<t_n=t\}$ of $(s,t]$
goes to $0$. The quadratic variation of $Z$ on $(0,t]$ is denoted
$[Z]_t$ and we have that $[Z]_{s,t}=[Z]_t-[Z]_s$, if $s\le t$.  In the
case $Z$ has finite variations, we have that
$[Z]_{s,t}=\sum_{s<u\le t} (\Delta Z_u)^2$.  The quadratic covariation
process of two càdlàg semimartingales $Z$ and $Z'$ is defined by
$[Z,Z']:=\frac{1}{4}\left([Z+Z']-[Z-Z']\right)$.  Note that if $Z$ or
$Z'$ has finite variations, then
$[Z,Z']_{s,t}=\sum_{s<u\le t} (\Delta Z_u)(\Delta Z'_u)$. A
semimartingale \(Z\) is said purely discontinuous if its quadratic
variation \([Z]\) is a finite variation pure-jump process,
i.e. \([Z]_{s,t}=\sum_{s<u\le t} (\Delta Z_u)^2\). If \(Z\) and
\(Z'\) are two purely discontinuous semimartingales then
\([Z,Z']_{s,t}=\sum_{s<u\le t} (\Delta Z_u)(\Delta Z'_u)\).


If $M$ is a locally square integrable local martingale, its predictable
quadratic variation process is well defined and is the predictable
process, denoted $\langle M\rangle$, such that $[M]-\langle M\rangle$ is
a local martingale.  If $M$ and $M'$ are two locally square integrable
local martingales, then we define the predictable quadratic covariation
process
$\langle M,M'\rangle:=\frac{1}{4}\left(\langle M+M'\rangle-\langle
  M-M'\rangle\right)$ and we have that $[M,M']-\langle M,M'\rangle$ is a
local martingale.

Let $\langle\cdot,\cdot\rangle$ be an inner product on $\mathbb{R}^d$,
with associated norm denoted by $\|\cdot\|$. We will denote by
$x^1,\dots,x^d$ the coordinates of $x\in \mathbb{R}^d$ in an orthonormal
basis for this inner product.

If $Z$ is a càdlàg process in $\mathbb{R}^d$ such that for all $i$,
$Z^i$ is a semimartingale, we will set $V(Z,(s,t])=\sum_i V(Z^i,(s,t])$,
$[Z]_t=\sum_i [Z^i]_t$ and $[Z]_{s,t}=\sum_i [Z^i]_{s,t}$. And when
$Z^i$ is a local martingale for each $i$, we will set
$\langle Z\rangle_t=\sum_i \langle Z^i\rangle_t$ and
$\langle Z\rangle_{s,t}=\sum_i \langle Z^i\rangle_{s,t}$.

Let $\mathcal{N}$ be an open set of $\mathbb{R}^d$ and $\nu\in (0,1]$. A
function $f:\mathbb{R}^d\to \mathbb{R}^\delta$ will be said to belong
to $C^{1+\nu}(\mathcal{N})$ if the restriction of $f$ to $\mathcal{N}$
is $C^1$ with a $\nu$-hölderian differential $Df$. We will also say that
$f\in C^2(\mathcal{N})$ if $f$ restricted to $\mathcal{N}$ is $C^2$
(note that a function $f$ may belong to $C^{1+1}(\mathcal{N})$ but not
to $C^2(\mathcal{N})$).

If $\tau:[0,\infty)\to [0,\infty)$ is a right-continuous non-decreasing
function, its right-continuous inverse is the right-continuous
non-decreasing function $\tau^{-1}:[0,\infty)\to [0,\infty)$ defined by
$\tau^{-1}(t)=\inf\{s:\,\tau(s)>t\}$.

We will denote by $\mathcal{M}_d(\mathbb{R})$
(resp. $\mathcal{M}_d(\mathbb{C})$) the set of $d\times d$-matrices with
real-valued (resp. complex-valued) entries. A vector $x$ in
$\mathbb{R}^d$ will be viewed as a column
$x=\begin{pmatrix}x_1\\ \vdots\\ x_d\end{pmatrix}$.

We will use the following convention: $\frac{0}{0}=1$ and
$\frac{a}{0}=\infty$, if $a>0$.

\section{Introduction}
%

The question of how traps are avoided by stochastic approximation
algorithms has been studied by several authors (see
\cite{Pemantle90,Brandiere96,Benaim99,Tarres00,Benaim12} and the survey
\cite{Pemantle07}). In this paper, we address the same question to
continuous-time stochastic processes and apply these results to
discrete-time random processes (i.e. stochastic algorithms). This
permits to extend previous results given in \cite{Pemantle90} and in
\cite{Brandiere96}. A correction of the proof and a correct formulation
of \cite{Brandiere96}[Théorème 1] are also given (see Theorem
\ref{thm:th4} and remarks \ref{rk:duflo1} and \ref{rk:duflo2}).

Giving a correction to \cite{Brandiere96}[Théorème 1] had to be done:
This theorem has been applied by many different authors (here is a
non-exhaustive list:
\cite{zbMATH07107405,zbMATH06841011,zbMATH06727471,zbMATH05850271,zbMATH02100778,zbMATH02072449,zbMATH01441557,zbMATH01359623}
a more complete list is given in
\href{https://zbmath.org/?q=rf%3A898068}{zbMath} or in
\href{https://mathscinet-ams-org.ezproxy.math.cnrs.fr/mathscinet/search/publications.html?refcit=1387397&loc=headline}{Mathscinet}).

In the continuous-time setting the only known results are
\cite{Benaim05}[Theorem 2.26] (for a class of self-interacting
diffusions) and \cite{Nguyen20}[Theorem 5.12] (which is quite general
but that only allows a certain class of traps, i.e. a repulsive
equilibrium $x^*$ of a $C^1$ vector field $F$ such that
$DF(x^*)=\lambda I$, with $\lambda>0$ and $I$ the identity matrix).

In continuous-time the stochastic processes we consider take their
values in $\mathbb{R}^d$ and satisfy an equation such as
\[
  X_t=X_0+\int_0^t F_s \,\mathrm{d}s + M_t + R_t, \qquad \forall t\ge 0,
\]
where $M$ is a càdlàg martingale, $F$ is a progressively measurable
process and $R$ is a càdlàg adapted process with finite variations.  In
discrete-time the random sequences we consider satisfy
\[
  X_{n+1}-X_n=\gamma_{n+1} G_n + c_{n+1}(\varepsilon_{n+1}+r_{n+1}),
  \qquad \forall n\ge 0,
\]
where $\varepsilon_{n+1}$ is a martingale increment, $G$ and $r$ are
adapted sequences, and where $\gamma$ and $c$ are sequences of
non-negative real numbers.

We will especially be interested to the case where
$F_t=\gamma_t f(X_t)$ (with $\gamma_t\ge 0$) and $G_n=f(X_n)$, with
$f$ a $C^1$-function. A trap is an unstable equilibrium $x^*$ of $f$,
i.e. $f(x^*)=0$ and $Df(x^*)$ has an eigenvalue with a positive real
part. The goal is to give sufficient conditions ensuring the
a.s. non-convergence of $X$ towards an unstable equilibrium $x^*$.  A typical situation in discrete-time is the case
$\gamma_n=c_n=\frac{1}{n}$. If one supposes that
$\sum_n \|r_n\|^2<\infty$ and that $\varepsilon_n$ is bounded, then to
prove that $X_n$ cannot converge towards an unstable equilibrium is
not straightforward. Additional assumptions on the noise
$\varepsilon_n$ are required.

The main contributions of this paper are the following ones:
\begin{itemize}
\item A careful study of the continuous-time setting.
\item Theorem \ref{thm:th4d}, that gives (in the discrete-time setting)
  a correct formulation of \cite{Brandiere96}[Théorème 1] and extends
  \cite{Pemantle90}[Theorem 1] (see Remark \ref{rk:pemantle}).
\item Theorem \ref{thm:thhyp} that extends \cite{Benaim99}[Theorem
  9.1]: in \cite{Benaim99}[Theorem 9.1], it is supposed that $\gamma_n=c_n$
  are such that $\sum e^{-c/\gamma_n}<\infty$ for all $c>0$ and such
  that
  $\lim_{n\to\infty}\frac{\gamma_n^\alpha}{\sqrt{\sum_{k\ge
        n}\gamma_k^2}}$.  This is not required in Theorem
  \ref{thm:th4d}. Note that Theorem \ref{thm:thhyp} is essentially
  equivalent to \cite{Tarres00}[Theorem 2].
\item Giving stronger results for excitations of order $k\ge 2$ (see
  \cite{Brandiere96}[Corollary 5]), our results are stated as a remark
  at the end of Section \ref{sec:discrete}. These results can easily be
  obtained by passing through the continuous time.
\item In all our results, the assumptions on $\gamma$ are weaker than
  for the previously shown non-convergence theorems. The only results we
  have requiring a condition on $\gamma$ are \textcolor{black}{Proposition~\ref{thm:th22}}
  and Theorem~\ref{thm:th5}-\textit{(ii)} (in the continuous-time setting),
  Proposition~\ref{thm:th22n}, Theorem~\ref{thm:th4d}-\textit{(ii)}, Theorem
  \ref{thm:th5d} (in the discrete-time setting) and Theorem
  \ref{thm:thhyp}, whereas the previous results in the discrete-time
  setting all required (except in \cite{Tarres00}[Theorem 1], which
  deals only with one-dimensional stochastic algorithms) the following
  condition on $\gamma$: $\sum_n\gamma_n=\infty$ and $\gamma_n=O(c_n)$
  with $c_n$ such that $\sum_n c_n^2<\infty$. This condition is replaced
  in Theorem \ref{thm:th4d}-\textit{(ii)} by $\sum_n c_n^2<\infty$ and
  $\sum_{n>t}\gamma_n^2=0\big(\sqrt{\sum_{n>t} c_n^2}\big)$ as
  $t\to\infty$.
\end{itemize}

\bigskip The proof of Theorem \ref{thm:th2} given in this paper
essentially follows the proof of \cite{Tarres00}[Theorem 1]. In the note
\cite{Tarres00}, P. Tarrès proved a non-convergence theorem for
one-dimensional stochastic algorithms (note that to extend its proof to
multi-dimensional stochastic algorithms or to the continuous-time
setting is not straightforward).

\medskip Let us give the content of this paper in a little more detail:

\smallskip The continuous-time results are stated in Section
\ref{sec:thmct}.  We first suppose that $F_t$ is such that
$\langle X_{t-},F_t\rangle\ge 0$ and state the non-convergence
\textcolor{black}{Theorem~\ref{thm:th2} and
  Proposition~\ref{thm:th22}}. These results are proved in section
\ref{sec:secondTHM}.

We then suppose that $F_t=\gamma_t H_t X_t$, with $\gamma_t\ge 0$ and
$H_t$ a matrix that converges as $t\to\infty$ towards a repulsive
matrix $H$, i.e. a matrix such that its eigenvalues all have a
positive real part. Applying Theorem \ref{thm:th2} to this setting
allows us to prove \textcolor{black}{Corollary~\ref{thm:th3b}}, which
is proved in Section \ref{sec:nonCVrepulsive}.  We finally suppose
that $F_t=\gamma_t f(X_t)$, with $\gamma_t\ge 0$ and $f$ a $C^1$
vector field and state the main theorem in continuous time: Theorem
\ref{thm:th4}.  Sufficient conditions are given in Theorem
\ref{thm:th4} for the a.s. non-convergence of $X$ towards an unstable
equilibrium.  Theorem \ref{thm:th4} roughly says that if $x^*$ is
unstable and if \(X\) is ``sufficiently'' excited towards the unstable
directions of $x^*$ then $X$ cannot converge towards $x^*$.  The proof
of Theorem \ref{thm:th4} is given in Section \ref{sec:nonCVunstable}.

The last theorem in continuous-time is Theorem \ref{thm:th5}. This
theorem is proved in Section \ref{sec:lastnonCVthm}.  Theorem
\ref{thm:th5} is not implied by Theorem \ref{thm:th4} and can be useful
in many applications.  Again Theorem \ref{thm:th5} gives sufficient
conditions for the a.s. non convergence towards an unstable equilibrium.
This theorem permits to deal with situations where for example $f$ is
$C^{1+\nu}$ in the neighborhood of an unstable equilibrium and condition
\textit{(ii)} (i.e.  $\sum_{s\ge t}\|M_s^-\|^{1+\nu}=o(\alpha_t)$) of
Theorem \ref{thm:th4} is not satisfied.  But, giving additional
assumptions on $f$ and on $\gamma$ we first show that, a.s. on the event
$\{\lim_{t\to\infty}X_t=x^*\}$, the process $X$ is attracted
sufficiently fast towards the unstable manifold of $x^*$. This ensures
that $X$ is close after a large time to a stochastic process $Y^+$ for
which condition \textit{(i)} of Theorem \ref{thm:th4} is satisfied. This
permits to prove Theorem \ref{thm:th5}.  \medskip

The discrete-time results are stated in Section \ref{sec:thmdt}.  The
discrete-time version of Theorem~\ref{thm:th2},
\textcolor{black}{Proposition~\ref{thm:th22}},
\textcolor{black}{Corollary~\ref{thm:th3b}, Theorem~\ref{thm:th4} and
  Theorem~\ref{thm:th5}} are respectively Theorem~\ref{thm:th2n},
\textcolor{black}{Proposition~\ref{thm:th22n}},
\textcolor{black}{Corollary~\ref{thm:th3bd}, Theorem~\ref{thm:th4d} and
  Theorem~\ref{thm:th5d}.} \medskip

In Section \ref{subsec:nonCVhyperbolic}, sufficient conditions ensuring
the non-convergence towards a normally hyperbolic set (for example, an
``unstable'' orbit) are given in Theorem \ref{thm:thhyp}.  This theorem
is proved as a corollary of \textcolor{black}{Proposition~\ref{thm:th22n}}.

In Section \ref{sec:proof_ct} (respectively in Section
\ref{sec:discrete}) the proofs of the continuous-time non-convergence
theorems \ref{thm:th2n}, \textcolor{black}{\ref{thm:th22n}} and \ref{thm:th4d}
(respectively discrete-time non-convergence theorems) are given.

In Section \ref{sec:example}, we give two examples where Theorem
\ref{thm:th5d} can be used to give correct proofs of Theorem 3.9 in
\cite{Benaim05} and Theorem 3.27 (with additional assumptions that are
satisfied for NBVRRWs) in \cite{LR18}. Theorem \ref{thm:th5} could also
be used to prove Theorem 6.13 in \cite{Nguyen20}, and Theorem
\ref{thm:th4} could be used to prove Theorem 2.26 in \cite{Benaim05} and
Lemma 4.9 in \cite{Benaim02}.

\section{Presentation of the results}
\label{sec:results}

\subsection{Non-convergence results for continuous-time stochastic
  processes}\label{sec:thmct}
Let $\langle\cdot,\cdot\rangle$ be an inner product on $\mathbb{R}^d$
and let $X$ be a càdlàg stochastic process in $\mathbb{R}^d$ such that
\begin{equation}\label{eq:Z}
  X_t-X_0=\int_0^t F_s \,\mathrm{d}s + M_t+R_t
\end{equation}
where
\begin{itemize}
\item $F$ is a progressively measurable process,
\item For each $i$, $M^i$ is a càdlàg locally square integrable
  martingale,
\item For each $i$, $R^i$ is a càdlàg adapted process with finite
  variations.
\end{itemize}
\begin{remark}
  These last two assumptions do not depend on the inner product
  $\langle\cdot,\cdot\rangle$, i.e. if they are satisfied for one inner
  product, they are satisfied for all inner product.
\end{remark}


Let
\begin{itemize}
\item $\Gamma$ be an event, 
\item $\alpha:[0,\infty)\to (0,\infty)$ be a non-increasing càdlàg
  function such that $\lim_{t\to\infty} \alpha(t)=0$,
\item $\tau:[0,\infty)\to [0,\infty)$ be a non-decreasing càdlàg
  function such that $\tau(t) > t$ for all $t>0$ and such that
  $\lim_{n\to\infty}\tau^n(t)=\infty$ for all $t>0$ (with
  $(\tau^n)_{n\ge 0}$ defined by $\tau^{n+1}=\tau^n\circ\tau$ if
  $n\ge 0$ and $\tau^0(t)=t$ if $t\ge 0$).
\item \(\kappa:[0,1]\to[0,1]\) be a càdlàg increasing function such that
  for all \(p\in[0,1]\), \(\kappa(p)\geq{p}\).
\item \(E\) be a càdlàg adapted process taking its values in
  \([0,\infty]\) such that a.s. on \(\Gamma\)
  \begin{equation}\label{eq:DME}
    \frac{\mathbb{E}[\|\Delta{}M_S\|^21_{S<\infty}|\mathcal{F}_t]}{\kappa(\mathbb{P}[S<\infty|\mathcal{F}_t])}\leq{}E_{t}
  \end{equation}
  for all \(t\geq{0}\) and all stopping times \(S\) larger than \(t\).
\end{itemize}
In practice, the function \(\kappa\) and the process \(E\) will be
defined by \(\kappa(p)=p^{b}\) for some \(b\in(0,1]\) and by
\(E_{t}=k^{2}\alpha^{2}_{t}\) for some \(k\in(0,\infty)\).

Our first result, proved in Section \ref{sec:secondTHM}, is the
following one:
\begin{theorem}\label{thm:th2}
  Suppose that there are a constant \(\rho>0\) and a finite random
  variable $T_0$ such that a.s. on $\Gamma$,
  \begin{align}
    \label{hyp:F1}
    & \langle X_{t-},F_t\rangle \ge 0,\quad \hbox{ for almost all } t\ge T_0 \hbox{ such that } \|X_{t-}\|<\rho,\\
    \label{hyp:<M>1}
    & \alpha^2_{t}-\alpha^2_{\tau(t)}=O\left(\langle{M}\rangle_{t,\tau(t)}\right) \\
    \label{hyp:<M>2}
    &\langle{M}\rangle_{t,\infty}=O\left(\alpha^{2}_{t}\right), \\
    \label{hyp:calpha}
    &V(R,(t,\infty))=o(\alpha_t),\\
    \label{hyp:jumps}
    & E_{t}= O(\alpha^2_t). 
  \end{align} 
  Then it holds that
  $\mathbb{P}[\Gamma\cap\{\lim_{t\to\infty}X_t=0\}]=0$.
\end{theorem}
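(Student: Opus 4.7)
The plan is to argue by contradiction, in the spirit of \cite{Tarres00}: assume $\mathbb{P}(\Gamma\cap A) > 0$, where $A:=\{\lim_{t\to\infty} X_t = 0\}$, and use $G_t := \|X_t\|^2$ as a Lyapunov function to derive a contradiction. The guiding heuristic is that the noise injected by $M$, calibrated by \eqref{hyp:<M>1}--\eqref{hyp:<M>2} to produce $\langle M\rangle_{t,\infty}\asymp\alpha_t^2$, pushes $G_t$ upward with an amplitude that cannot be absorbed by the non-negative drift \eqref{hyp:F1} nor by the negligible finite-variation part \eqref{hyp:calpha}.

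The first step is to apply Itô's formula to $f(x)=\|x\|^2$, giving for $0\le t\le T$,
$$\|X_T\|^2 - \|X_t\|^2 \;=\; 2\int_t^T\langle X_{s-},F_s\rangle\, ds \;+\; N_{t,T} \;+\; 2\int_t^T\langle X_{s-},dR_s\rangle \;+\; [X]_{t,T},$$
where $N_{t,T}:=2\int_t^T\langle X_{s-},dM_s\rangle$ is a locally square-integrable martingale with $\langle N\rangle_{t,T}\le 4\sup_{t\le s\le T}\|X_s\|^2\,\langle M\rangle_{t,T}$. For $\delta\in(0,\rho)$ and deterministic $t_0\ge 0$, set
$$B_{t_0}^\delta \;:=\; \Gamma\cap\{T_0\le t_0\}\cap\bigl\{\sup\nolimits_{s\ge t_0}\|X_s\|\le\delta\bigr\};$$
the family $(B_{t_0}^\delta)$ exhausts $\Gamma\cap A$ as $t_0\to\infty$ and $\delta\downarrow 0$, so it suffices to show $\mathbb{P}(B_{t_0}^\delta)=0$ for $\delta$ small.

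On $B_{t_0}^\delta$ and $t\ge t_0$: the drift term is non-negative by \eqref{hyp:F1}; the $R$-integral is bounded by $\delta\,V(R,(t,\infty))=o(\delta\alpha_t)$ via \eqref{hyp:calpha}; summing the lower bound extracted from \eqref{hyp:<M>1} telescopically along the iterates of $\tau$ and combining it with \eqref{hyp:<M>2} yields the two-sided comparison $\langle M\rangle_{t,\infty}\asymp\alpha_t^2$; finally, the jump control \eqref{hyp:jumps}--\eqref{eq:DME} allows one to bound the conditional second moment of the increments of $G$ and to ensure that $[M]$ is comparable to $\langle M\rangle$ on the relevant events. Taking conditional expectations gives $\mathbb{E}[G_{\tau(t)}\mid\mathcal{F}_t]\ge G_t + c(\alpha_t^2-\alpha_{\tau(t)}^2)-o(\delta\alpha_t)$.

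The main obstacle is the upgrade from these expectation and $L^2$ estimates to an almost-sure contradiction. The plan, following \cite{Tarres00}, is to establish a conditional Paley--Zygmund-type lower bound
$$\mathbb{P}\bigl[\,\|X_{\tau(t)}\|^2 \;\ge\; c\,(\alpha_t^2-\alpha_{\tau(t)}^2) \,\bigm|\, \mathcal{F}_t\bigr] \;\ge\; q$$
a.s.\ on $B_{t_0}^\delta$ for constants $c,q>0$ independent of $t$, combining the first-moment estimate above with the second-moment control furnished by \eqref{eq:DME}--\eqref{hyp:jumps}. The conditional Borel--Cantelli lemma applied along $t_n:=\tau^n(t_0)$ then forces the events $\{\|X_{t_{n+1}}\|^2\ge c\,(\alpha_{t_n}^2-\alpha_{t_{n+1}}^2)\}$ to occur for infinitely many $n$ a.s.\ on $B_{t_0}^\delta$. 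Summing the associated excesses and confronting the total with the almost-sure inequality obtained by letting $T\to\infty$ in Itô's identity (which, using $\|X_T\|^2\to 0$, bounds $[X]_{t,\infty}$ by $|N_{t,\infty}|+o(\delta\alpha_t)$ and hence by a quantity of order $\delta\alpha_t$ rather than $\alpha_t^2$) produces a numerical impossibility once $\delta$ is chosen small enough. Since $t_0$ and $\delta$ are arbitrary, this gives $\mathbb{P}(\Gamma\cap A)=0$.
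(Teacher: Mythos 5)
Your setup (Itô on $\|X\|^2$, the tension between the injected variance $\langle M\rangle_{t,\infty}\asymp\alpha_t^2$ and the non-negative drift plus negligible $R$-part) is the right starting point and mirrors the first half of the paper's argument. But the endgame does not work, for two related reasons. First, the events you feed into conditional Borel--Cantelli, $\{\|X_{t_{n+1}}\|^2\ge c(\alpha_{t_n}^2-\alpha_{t_{n+1}}^2)\}$, have thresholds tending to $0$, so their occurrence for infinitely many $n$ is perfectly compatible with $\lim_{t\to\infty}X_t=0$; and the sum of the ``excesses'' telescopes to $c\,\alpha_{t_0}^2<\infty$, so nothing diverges. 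Second, the claimed numerical impossibility is vacuous: your lower bound on $[X]_{t,\infty}$ is of order $\alpha_t^2$ while your upper bound is of order $\delta\alpha_t$, and since $\alpha_t\to 0$ one has $\alpha_t^2=o(\delta\alpha_t)$ for \emph{every} fixed $\delta>0$, no matter how small. This is precisely why the paper (following Tarr\`es) works with the \emph{moving} barrier $S_t=\inf\{s\ge t:\|X_s\|\ge L\alpha_s\}$ rather than a fixed $\delta$-ball: on $\{S_t>s\}$ one has the deterministic bound $\|X_{s\wedge S_t}\|\le L\alpha_s$, so both sides of the comparison scale like $\alpha_t^2$ and the constants yield a uniform lower bound $\mathbb{P}[S_t<\infty\mid\mathcal{F}_t]\ge p$ (Lemma \ref{lem:lem1}). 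Even granting that step, reaching level $L\alpha_s$ infinitely often still does not preclude convergence to $0$; the indispensable second ingredient, entirely absent from your proposal, is the persistence estimate (Lemma \ref{lem:lem2}): once $\|X\|$ reaches $L\alpha_{S_t}$, with conditional probability at least $1/2$ it never falls below $L\alpha_{S_t}/4$ again (before exiting the $\rho$-ball), which is what makes $\{\limsup_t\|X_t\|>0\}$ occur and feeds the final L\'evy $0$--$1$ argument.

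Two further technical points. Your Paley--Zygmund step would require a conditional bound on $\mathbb{E}[\|X_{\tau(t)}\|^4\mid\mathcal{F}_t]$, which the hypotheses do not provide; the hypothesis \eqref{eq:DME}/\eqref{hyp:jumps} is formulated for stopping times precisely so as to control the overshoot $\|\Delta X_{S_t}\|$ at the barrier, not generic fourth moments at deterministic times. And your conditional-expectation estimates are taken ``on $B_{t_0}^\delta$'', an event that is not $\mathcal{F}_t$-measurable (it constrains the whole future path); the paper resolves this by introducing a stopping time $T$ at which the hypotheses first fail and replacing the process after $T$ by one that satisfies them globally (appending a martingale with the prescribed bracket), so that all estimates become genuinely almost sure. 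That localization is routine, but the missing moving barrier and persistence lemma are not: without them the proof cannot be completed along the lines you propose.
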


\begin{remark}
  The assumptions \eqref{hyp:F1}, \eqref{hyp:<M>1}, \eqref{hyp:<M>2},
  \eqref{hyp:calpha} and \eqref{hyp:jumps} do not depend on the inner
  product $\langle\cdot,\cdot\rangle$, i.e. if they are satisfied for
  one inner product, they are satisfied for all inner product.
\end{remark}

Assuming a stronger assumption on $F$ allows to have a weaker assumption
on $R$:
\begin{proposition}\label{thm:th22}
  Let $\gamma:[0,\infty)\to [0,\infty)$ be a measurable function such
  that $\lim_{t\to\infty}\gamma_t=0$ and such that
  $\int_t^\infty \gamma_s^2 ds=O(\alpha_t^2)$.  Suppose that
  $R_{t}=R'_{t}+\int_{0}^{t}r''_{s}\,\mathrm{d}s$, with $R'$ a càdlàg
  apdpted proces with finite variations and $r''$ a càdlàg adapted
  process, and that there are constants \(\rho>0\), $\nu>\frac12$ and
  a finite random variable $T_0$ such that a.s. on $\Gamma$,
  \eqref{hyp:<M>1}, \eqref{hyp:<M>2} and \eqref{hyp:jumps} are
  satisfied and that
  \begin{align}
    \label{hyp:F12}
    & \langle X_{t-},F_t\rangle \ge \gamma_t \|X_{t-}\|^2,\quad \hbox{ for almost all } t\ge T_0 \hbox{ such that } \|X_{t-}\|<\rho,  \\
    \label{hyp:calpha2}
    & V(R',(t,\infty))=o(\alpha_t),\\
    \label{hyp:calpha3}
    & r''_{t}=O(\gamma_{t}^{1+\nu}).
  \end{align} 
  Then it holds that
  $\mathbb{P}[\Gamma\cap\{\lim_{t\to\infty}X_t=0\}]=0$.
\end{proposition}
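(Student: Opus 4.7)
The plan is to deduce Proposition~\ref{thm:th22} from Theorem~\ref{thm:th2} by absorbing the extra integrated drift $\int_0^\cdot r''_s\,\mathrm{d}s$ into the drift $F$. Rewrite~\eqref{eq:Z} as
\[
X_t = X_0 + \int_0^t \tilde F_s\,\mathrm{d}s + M_t + R'_t, \qquad \tilde F_s := F_s + r''_s.
\]
The hypotheses~\eqref{hyp:<M>1}, \eqref{hyp:<M>2} and~\eqref{hyp:jumps} carry over verbatim, and~\eqref{hyp:calpha2} gives the finite-variation hypothesis~\eqref{hyp:calpha} for $R'$. What remains is to check the sign hypothesis~\eqref{hyp:F1} for $\tilde F$.

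Let $K>0$ be the implicit constant in~\eqref{hyp:calpha3}. Cauchy--Schwarz followed by the AM--GM inequality gives
\[
|\langle X_{s-},r''_s\rangle|\ \le\ K\gamma_s^{1+\nu}\|X_{s-}\|\ \le\ \tfrac12\gamma_s\|X_{s-}\|^2+\tfrac{K^2}{2}\gamma_s^{1+2\nu},
\]
and combined with~\eqref{hyp:F12} this yields
\[
\langle X_{s-},\tilde F_s\rangle\ \ge\ \tfrac12\gamma_s\|X_{s-}\|^2 - \tfrac{K^2}{2}\gamma_s^{1+2\nu}.
\]
Thus the sign condition~\eqref{hyp:F1} holds pointwise as soon as $\|X_{s-}\|\ge K\gamma_s^\nu$. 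Crucially, $\nu>\tfrac12$ together with $\int_t^\infty\gamma_s^2\,\mathrm{d}s=O(\alpha_t^2)$ implies $\int_t^\infty\gamma_s^{1+2\nu}\,\mathrm{d}s=o(\alpha_t^2)=o(\alpha_t)$, since the ratio $\gamma_s^{1+2\nu}/\gamma_s^2=\gamma_s^{2\nu-1}$ tends to $0$.

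The main obstacle, and where I expect the real work, is the pointwise failure of the sign hypothesis in the regime $\|X_{s-}\|<K\gamma_s^\nu$. My proposal is to add to $\tilde F_s$ a correction $g_s = c_s\,X_{s-}/\|X_{s-}\|^2$ (set to $0$ when $X_{s-}=0$), with $c_s\ge 0$ chosen to exactly cancel the negative part of the lower bound above, and then transfer $-\int_0^\cdot g_s\,\mathrm{d}s$ into the finite-variation piece. The sign condition is then satisfied everywhere, but one must verify that the total variation of the new FV part is still $o(\alpha_t)$: pathwise $\|g_s\|$ can blow up like $\gamma_s^{1+2\nu}/\|X_{s-}\|$ as $X_{s-}\to 0$. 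Controlling this integrated contribution---probably via a stopping-time argument or a pathwise lower bound on $\|X_{s-}\|$ derived from the noise fluctuations~\eqref{hyp:<M>1}--\eqref{hyp:<M>2} on the event $\{\lim_t X_t = 0\}$---is the delicate step, and it is precisely there that the balance $\int_t^\infty\gamma_s^2\,\mathrm{d}s=O(\alpha_t^2)$ together with $\nu>\tfrac12$ enters essentially.
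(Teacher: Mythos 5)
Your reduction correctly locates the difficulty (the regime $\|X_{s-}\|\lesssim K\gamma_s^{\nu}$ where the sign condition fails), and your estimate $\langle X_{s-},\tilde F_s\rangle\ge\tfrac12\gamma_s\|X_{s-}\|^2-\tfrac{K^2}{2}\gamma_s^{1+2\nu}$ is essentially the one the paper uses. But the step you yourself flag as ``the delicate step'' is a genuine gap, and the mechanism you propose cannot close it. The correction $g_s=c_s X_{s-}/\|X_{s-}\|^2$ transfers into the finite-variation part a term of variation $\int_t^\infty c_s\|X_{s-}\|^{-1}\,\mathrm{d}s$, and on the event $\{\lim_{t\to\infty}X_t=0\}$ there is no pathwise lower bound on $\|X_{s-}\|$ to be had: the process may come arbitrarily close to (or hit) $0$, and any a priori lower bound would be circular, since non-degeneracy of $\|X\|$ is precisely what the proposition is trying to establish. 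So the quantity $\int_t^\infty\gamma_s^{1+2\nu}\|X_{s-}\|^{-1}\,\mathrm{d}s$ need not be $o(\alpha_t)$, and a black-box application of Theorem~\ref{thm:th2} after this change of drift does not go through. (Your observation that $\int_t^\infty\gamma_s^{1+2\nu}\,\mathrm{d}s=o(\alpha_t^2)$ is correct but does not help here, because your scheme forces the deficit through the unbounded factor $\|X_{s-}\|^{-1}$.)

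The way out is to reopen the proof of Theorem~\ref{thm:th2} rather than apply it as a black box; this is what the paper does, and it is where $\nu>\tfrac12$ and $\int_t^\infty\gamma_s^2\,\mathrm{d}s=O(\alpha_t^2)$ are actually consumed. In Lemma~\ref{lem:lem1} the drift enters only through the integrated quantity $\int_t^{s\wedge S_t}\langle X_{u-},F_u+r''_u\rangle\,\mathrm{d}u$, which (splitting on $\|X_{u-}\|\gtrless k\gamma_u^{\nu}$) is bounded below by $-k^2\int_t^\infty\gamma_u^{1+2\nu}\,\mathrm{d}u=o(\alpha_t^2)$; pointwise nonnegativity is never needed, only smallness of the integrated deficit at the \emph{second-moment} scale $\alpha_t^2$, where the argument has slack. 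In Lemma~\ref{lem:lem2} the relevant interval is $[S_t,T_t\wedge U_t)$, on which $\|X_{u-}\|\ge L\alpha_{S_t}/4$ by construction of the stopping times, so the extra drift is bounded below by $\frac{L\alpha_{S_t}}{4}\int\gamma_u\,\mathrm{d}u-k^2\alpha_{S_t}\bigl(\int\gamma_u^{2\nu}\,\mathrm{d}u\bigr)^{1/2}$ via Cauchy--Schwarz, and $\nu>\tfrac12$ makes the negative term negligible relative to the positive one. Carrying out these two modifications, instead of seeking a pathwise lower bound on $\|X_{s-}\|$, is what completes the proof.
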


\begin{remark}\label{rk:th22}
  When $X$ is a non-negative one-dimensional process, it suffices to
  suppose in Proposition~\ref{thm:th22} that $R$ is such that
  $R_{t}\ge R'_{t}+\int_{0}^{t}r''_{s}\,\mathrm{d}s$, with $R'$ a
  càdlàg adapted process with finite variations and $r''$ a càdlàg
  adapted process such that a.s. on $\Gamma$, \eqref{hyp:calpha2} and
  \eqref{hyp:calpha3} are satisfied.
\end{remark}

\textcolor{black}{Theorem~\ref{thm:th2} and Proposition~\ref{thm:th22}}
will be proved in Section \ref{sec:secondTHM}.  In
Section~\ref{sec:nonCVrepulsive}, the following corollary of
Theorem~\ref{thm:th2} will be proved:

\begin{corollary}\label{thm:th3b}
  For each $t\ge 0$, let $\gamma_t$ be non-negative and $H_t$ be a
  $\mathcal{M}_d(\mathbb{R})$-valued random matrix.  Let
  $H\in \mathcal{M}_d(\mathbb{R})$ be a repulsive matrix, i.e. a matrix
  such that its eigenvalues all have a positive real part.  Suppose that
  there is a finite random variable $T_0$ such that a.s. on $\Gamma$,
  \begin{itemize}
  \item $F_t=\gamma_t H_tX_{t-}$ for almost all
    $t\ge T_0$; 
  \item $\lim_{t\to\infty}H_t=H$;
  \item conditions \eqref{hyp:<M>1}, \eqref{hyp:<M>2},
    \eqref{hyp:calpha} and \eqref{hyp:jumps} are satisfied.
  \end{itemize}
  Then it holds that
  $\mathbb{P}[\Gamma\cap\{\lim_{t\to\infty}X_t=0\}]=0$.
\end{corollary}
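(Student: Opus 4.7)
The plan is to reduce Corollary~\ref{thm:th3b} to Theorem~\ref{thm:th2} by choosing a well-chosen inner product on $\mathbb{R}^d$. The difficulty is that $H$ being repulsive (all eigenvalues with positive real part) does \emph{not} imply $\langle x, Hx\rangle \ge 0$ for the original inner product; so hypothesis~\eqref{hyp:F1} need not hold as stated. However, the remark after Theorem~\ref{thm:th2} tells us that the hypotheses \eqref{hyp:<M>1}--\eqref{hyp:jumps} and the conclusion $\{\lim_t X_t = 0\}$ do not depend on the choice of inner product, so we are free to replace $\langle\cdot,\cdot\rangle$ by any equivalent inner product.

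The main step is therefore the classical Lyapunov fact: since $-H$ is a stable matrix (all its eigenvalues have negative real part), there exists a symmetric positive-definite $Q\in\mathcal{M}_d(\mathbb{R})$ and a constant $c>0$ such that
\[
  x^T(H^T Q + Q H)x \ge 2c\, x^T Q x \quad\text{for all } x\in\mathbb{R}^d.
\]
One way to see this is to solve the Lyapunov equation $H^T Q + QH = 2cI$ for $c>0$ small enough (equivalently, $Q=\int_0^\infty e^{-sH^T}e^{-sH}\,\mathrm{d}s$ works after rescaling, using that $e^{-tH}$ decays exponentially). Define the new inner product $\langle x,y\rangle_* := x^T Q y$, with norm $\|\cdot\|_*$ equivalent to $\|\cdot\|$.

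With this inner product, on $\Gamma$ we have $H_t\to H$, so eventually the smallest eigenvalue of the symmetric part of $H_t$ (with respect to $\langle\cdot,\cdot\rangle_*$) is at least $c$; hence there exists a finite random time $T_1\ge T_0$ such that, a.s. on $\Gamma$,
\[
  \langle X_{t-}, F_t\rangle_* \;=\; \gamma_t\, \langle X_{t-}, H_t X_{t-}\rangle_* \;\ge\; c\gamma_t \|X_{t-}\|_*^2 \;\ge\; 0,
\]
for all $t\ge T_1$. This verifies \eqref{hyp:F1} (with any $\rho>0$, in the new inner product). The conditions \eqref{hyp:<M>1}, \eqref{hyp:<M>2}, \eqref{hyp:calpha}, \eqref{hyp:jumps} transfer verbatim by the remark. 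Applying Theorem~\ref{thm:th2} with $\langle\cdot,\cdot\rangle_*$ yields $\mathbb{P}[\Gamma\cap\{X_t\to 0\text{ in }\|\cdot\|_*\}]=0$, which, by equivalence of norms in $\mathbb{R}^d$, is the desired conclusion.

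The only non-routine step is the construction of $Q$; everything else is bookkeeping. In particular, no assumption on $\gamma$ is needed (non-negativity suffices) because once $\langle x, H_t x\rangle_* \ge c\|x\|_*^2$ eventually, multiplication by $\gamma_t\ge 0$ preserves the sign. This is why Corollary~\ref{thm:th3b} falls out of Theorem~\ref{thm:th2} directly, without invoking the stronger Proposition~\ref{thm:th22}.
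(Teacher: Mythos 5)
Your proposal is correct and follows essentially the same route as the paper: both reduce to Theorem~\ref{thm:th2} by constructing an adapted inner product in which $\langle Hx,x\rangle\ge c\|x\|^2$, then use $H_t\to H$ on $\Gamma\cap\{\lim_t X_t=0\}$ to get \eqref{hyp:F1} eventually, noting that the remaining hypotheses and the conclusion are inner-product independent. The only difference is how that inner product is produced — you solve the Lyapunov equation $H^TQ+QH=I$ via $Q=\int_0^\infty e^{-sH^T}e^{-sH}\,\mathrm{d}s$, whereas the paper's Lemma~\ref{lem:repulsive} uses a Jordan basis with $\varepsilon$-scaled off-diagonal entries — and both constructions are standard and valid.
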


For the two remaining theorems, respectively proved in sections
\ref{sec:nonCVunstable} and \ref{sec:lastnonCVthm}, we let
$x^*\in \mathbb{R}^d$ and $f:\mathbb{R}^d\to\mathbb{R}^d$ be a function
such that $f$ is $C^{1}$ in a convex neighborhood $\mathcal{N}^*$ of
$x^*$. 
We will suppose that $x^*$ is an equilibrium for $f$, i.e. $f(x^*)=0$.
The equilibrium $x^*$ is said to be repulsive (resp. unstable) if the
eigenvalues of $Df(x^*)$ all have a positive real part (resp. if there
is an eigenvalue of $Df(x^*)$ having a positive real part).

The characteristic polynomial $\Pi^*$ of $H^*:=Df(x^*)$ can be written
as the product of two polynomials $\Pi^+$ and $\Pi^-$, such that all
roots of $\Pi^+$ (respectively of $\Pi^-$) all have a positive real part
(respectively all have a non-positive real part). Let $\delta^+$ and
$\delta^-$ be respectively the dimension of the kernel of $\Pi^+(H^*)$
and the dimension of the kernel of $\Pi^-(H^*)$.  Then there are an
invertible matrix $P\in\mathcal{M}_d(\mathbb{R})$, a matrix
$H^+\in\mathcal{M}_{\delta^+}(\mathbb{R})$ and a matrix
$H^-\in\mathcal{M}_{\delta^-}(\mathbb{R})$ such that
\begin{itemize}
\item
  $P^{-1} Df(x^*) P = \hbox{diag}[H^+,H^-]=\begin{pmatrix} H^+ & 0\\
    0&H^-
  \end{pmatrix}$;
\item The eigenvalues of $H^+$ all have a positive real part, i.e. $H^+$
  is repulsive;
\item The eigenvalues of $H^-$ all have a non-positive real part,
  i.e. $H^-$ is non-repulsive.
\end{itemize}
Note that if $x^*$ is unstable then $\delta^+\ge 1$ and if $x^*$ is
repulsive then $\delta^+=d$.  For $x\in\mathbb{R}^d$, set
$y:=P^{-1}x$. Let
$(y^+,y^-)\in\mathbb{R}^{\delta^+}\times\mathbb{R}^{\delta^-}$ be such
that $y=\begin{pmatrix} y^+\\y^-\end{pmatrix}$. Then
$P^{-1} Df(x^*) Py=\begin{pmatrix} H^+y^+\\H^-y^-\end{pmatrix}$.  Set
$M^\pm=(P^{-1}M)^\pm$ and $R^\pm=(P^{-1}R)^\pm$.

\begin{theorem}\label{thm:th4}
  Let $\gamma:[0,\infty)\to [0,\infty)$ be a measurable function and
  suppose that a.s. on $\Gamma$,
  \begin{itemize}
  \item $F_t=\gamma_t f(X_t)$, for almost all
    $t\ge 0$; 
  \item conditions \eqref{hyp:<M>1} and \eqref{hyp:<M>2} are satisfied
    by $M^+$, \eqref{hyp:calpha} and \eqref{hyp:jumps} are satisfied,
    and
    $\langle M^-\rangle_{t,\tau(t)}=O(\langle M^+\rangle_{t,\tau(t)})$.
  \end{itemize}
  Suppose also that one of the three following conditions is satisfied
  \begin{enumerate}[(i)]
  \item $x^*$ is repulsive;
  \item $x^*$ is unstable, $f\in C^{1+\nu}(\mathcal{N}^*)$ for some
    $\nu\in (0,1]$, $M$ is a purely discontinuous martingale and
    $\sum_{s\geq t}\|\Delta M^-_s\|^{1+\nu}=o(\alpha_t)$ on
    $\Gamma\cap\{\lim_{t\to\infty} X_t=x^*\}$;
  \item $x^*$ is unstable, $f\in C^2(\mathcal{N}^*)$ and
    $\sum_{s\geq t}\|\Delta M^-_s\|^{2}=o(\alpha_t)$ on
    $\Gamma\cap\{\lim_{t\to\infty} X_t=x^*\}$.
  \end{enumerate}
  Then it holds that
  $\mathbb{P}[\Gamma\cap\{\lim_{t\to\infty}X_t=x^*\}]=0$.
\end{theorem}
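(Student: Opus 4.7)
The plan is to linearise around $x^*$ and reduce all three cases to Corollary~\ref{thm:th3b}, applied to the projection $Y^+$ of $X-x^*$ on the repulsive invariant subspace of $H^*=Df(x^*)$. Set $Y_t:=P^{-1}(X_t-x^*)$; then $Y$ is càdlàg with decomposition
\[Y_t-Y_0=\int_0^t\gamma_s\bigl(\hbox{diag}[H^+,H^-]Y_s+h(Y_s)\bigr)\mathrm{d}s+(P^{-1}M)_t+(P^{-1}R)_t,\]
where $h(y):=P^{-1}f(x^*+Py)-\hbox{diag}[H^+,H^-]y$ satisfies $h(0)=0=Dh(0)$ and, on $\mathcal{N}^*$, $\|h(y)\|=O(\|y\|^{1+\nu})$ in cases (i)--(ii) and $\|h(y)\|=O(\|y\|^2)$ in case~(iii). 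The hypotheses on $M,R$ transfer to $(P^{-1}M)^{\pm}=M^{\pm}$ and $(P^{-1}R)^{\pm}=R^{\pm}$. I work on the event $\Gamma':=\Gamma\cap\{\lim_{t\to\infty}X_t=x^*\}=\Gamma\cap\{\lim_{t\to\infty}Y_t=0\}$, aiming at $\mathbb{P}[\Gamma']=0$.

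For case~(i), where $\delta^+=d$ and $Y=Y^+$, I define the adapted matrix
\[K_t:=\begin{cases} h(Y_{t-})Y_{t-}^{\top}/\|Y_{t-}\|^2,&Y_{t-}\ne 0,\\ 0,&Y_{t-}=0,\end{cases}\]
which satisfies $K_tY_{t-}=h(Y_{t-})$ and $\|K_t\|\le\|h(Y_{t-})\|/\|Y_{t-}\|\to 0$ on $\Gamma'$. The drift of $Y$ is then $\gamma_t(H^++K_t)Y_{t-}$ with $H^++K_t\to H^+$ repulsive on $\Gamma'$, and Corollary~\ref{thm:th3b} applied to $Y$ gives $\mathbb{P}[\Gamma'\cap\{Y_t\to 0\}]=0=\mathbb{P}[\Gamma']$.

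For cases~(ii) and (iii) I apply Corollary~\ref{thm:th3b} to the $\mathbb{R}^{\delta^+}$-valued process $Y^+$. I split the nonlinear drift into a self-part and a cross-part: setting $\tilde h(y^+):=h^+(y^+,0)$, which satisfies $\|\tilde h(y^+)\|=O(\|y^+\|^{1+\nu})$ (resp.\ $O(\|y^+\|^2)$), I define $\tilde K_t$ from $\tilde h(Y^+_{t-})$ as above; then $\|\tilde K_t\|=O(\|Y^+_{t-}\|^\nu)\to 0$ on $\Gamma'$ and
\[Y^+_t=Y^+_0+\int_0^t\gamma_s(H^++\tilde K_s)Y^+_{s-}\,\mathrm{d}s+M^+_t+\bar R_t,\qquad \bar R_t:=R^+_t+\int_0^t\gamma_s\bigl[h^+(Y_s)-\tilde h(Y^+_s)\bigr]\mathrm{d}s.\]
Since $H^++\tilde K_t\to H^+$ repulsive on $\Gamma'$, Corollary~\ref{thm:th3b} will conclude once I verify $V(\bar R,(t,\infty))=o(\alpha_t)$ on $\Gamma'$. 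Applying Taylor in the $\delta^-$ coordinates, $\|h^+(y)-\tilde h(y^+)\|\lesssim\|y\|^\nu\|y^-\|$ in case~(ii) (resp.\ $\|y\|\|y^-\|$ in case~(iii)), so this reduces to showing
\[\int_t^\infty\gamma_s\|Y_s\|^\nu\|Y^-_s\|\,\mathrm{d}s=o(\alpha_t)\quad(\text{resp.\ with }\|Y_s\|).\]

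The hard part is the control of $\|Y^-_s\|$ on $\Gamma'$. Since $H^-$ has all eigenvalues with non-positive real part, a variation-of-parameters argument on the SDE for $Y^-$ bounds $\|Y^-_s\|$ by the tail oscillation of $M^-$ plus smaller finite-variation remainders coming from $R^-$ and from the nonlinearity $h^-$; this is where the purely discontinuous structure in case~(ii) is used, so that the hypothesis $\sum_{u\ge s}\|\Delta M^-_u\|^{1+\nu}=o(\alpha_s)$ (resp.\ $\sum_{u\ge s}\|\Delta M^-_u\|^{2}=o(\alpha_s)$ in case~(iii)) can be injected into the bound after raising to the appropriate power. Combining this Gronwall-type estimate with \eqref{hyp:calpha}, \eqref{hyp:jumps} and the rate $\langle M^+\rangle_{t,\infty}=O(\alpha_t^2)$ from \eqref{hyp:<M>2} is the technical heart of the proof and yields the required $o(\alpha_t)$-bound on $V(\bar R,(t,\infty))$. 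Once it is in place, Corollary~\ref{thm:th3b} applied to $Y^+$ gives $\mathbb{P}[\Gamma'\cap\{Y^+_t\to 0\}]=0$, hence $\mathbb{P}[\Gamma']=0$.
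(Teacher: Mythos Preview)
Your treatment of case~(i) is essentially the paper's: linearise, absorb the $o(1)$ remainder into a matrix perturbation of $H^+$, and invoke Corollary~\ref{thm:th3b}. The outer-product trick you use for $K_t$ is a cosmetic variant of the paper's integral representation $r(x)=\int_0^1[Df(tx)-Df(0)]\,dt$.

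For cases~(ii) and~(iii), however, there is a genuine gap. You project \emph{linearly} onto the unstable subspace and work with $Y^+$; this forces the cross-term $h^+(Y_s)-\tilde h(Y^+_s)$ into $\bar R$, and you then need
\[
\int_t^\infty\gamma_s\,\|Y_s\|^\nu\|Y^-_s\|\,\mathrm{d}s=o(\alpha_t)\quad\text{on }\Gamma'.
\]
This cannot be extracted from the hypotheses of Theorem~\ref{thm:th4}. There is \emph{no} assumption on $\gamma$ there (in particular $\int\gamma_s\,ds$ may be infinite), and $H^-$ is merely non-repulsive, so the semigroup $\exp\bigl(\int\gamma\cdot H^-\bigr)$ need not contract and your ``variation-of-parameters / Gronwall'' bound on $\|Y^-_s\|$ does not follow. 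Even granting $\|Y^-_s\|\lesssim\alpha_s$ heuristically, with $\gamma\equiv 1$ and $\alpha_s\sim s^{-1/2}$ the integral above behaves like $\int_t^\infty s^{-(1+\nu)/2}\,ds$, which diverges for every $\nu\le 1$. Finally, the hypothesis $\sum_{s\ge t}\|\Delta M^-_s\|^{1+\nu}=o(\alpha_t)$ is a \emph{jump} condition; it says nothing about $\|Y^-_s\|$ or about the drift integral above, so there is nothing to ``inject''.

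The paper's device is precisely to make this integral vanish: it rectifies via the local \emph{center-stable manifold}. There is a $C^{1+\nu}$ map $g:\mathbb{R}^{\delta^-}\to\mathbb{R}^{\delta^+}$ with $g(0)=0$, $Dg(0)=0$, such that $h^+(y)-Dg(y^-)h^-(y)=0$ whenever $y^+=g(y^-)$. Setting $U^+_t:=Y^+_t-g(Y^-_t)$, the drift of $U^+$ is then \emph{exactly} $\gamma_t H^+_tU^+_{t-}$ with $H^+_t\to H^+$, so no residual drift term survives in the remainder. The only new contribution to the finite-variation part is the It\^o jump defect $\Delta g(Y^-_s)-Dg(Y^-_{s-})\Delta Y^-_s=O(\|\Delta Y^-_s\|^{1+\nu})$, and \emph{that} is where the hypothesis $\sum_{s\ge t}\|\Delta M^-_s\|^{1+\nu}=o(\alpha_t)$ is actually spent. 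With this nonlinear change of coordinates Corollary~\ref{thm:th3b} applies directly; your linear projection does not achieve the same cancellation.
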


\begin{remark}
  Note that if a.s. on $\Gamma$, $M^+$ satisfies \eqref{hyp:<M>2} and
  $\langle M^-\rangle_{t,\tau(t)}=O(\langle M^+\rangle_{t,\tau(t)})$,
  then a.s. on $\Gamma$, $M^-$ satisfies \eqref{hyp:<M>2}. Indeed,
  a.s. on $\Gamma$,
  \begin{align*}
    \langle M^-\rangle_{t,\infty}
    &=\sum_{n=1}^\infty\langle{}M^-\rangle_{\tau^{n-1}(t),\tau^n(t)}=O\left(\sum_{n=1}^\infty\langle{}M^+\rangle_{\tau^{n-1}(t),\tau^n(t)}\right)\\
    &=O(\langle{}M^+\rangle_{t,\infty})=O(\alpha_t^2).
  \end{align*}
  Note also that if a.s. on $\Gamma$, $M^+$ satisfies \eqref{hyp:<M>1}
  and $\langle M^-\rangle_{t,\tau(t)}=0(\alpha^2_t-\alpha^2_{\tau(t)})$,
  then a.s. on $\Gamma$,
  $\langle M^-\rangle_{t,\tau(t)}=O(\langle M^+\rangle_{t,\tau(t)})$.
\end{remark}

\begin{remark}
  Theorem \ref{thm:th4} implies\footnote{Not completely true since the
    condition
    $\mathbb{E}\left[1_\Gamma \left(\int_t^\infty \| R_s\|
        ds\right)^2\right]=o(\alpha_t^2)$, given in \cite{Nguyen20},
    does not imply that a.s. on $\Gamma$,
    $\int_t^\infty \| R_s\| ds=o(\alpha_t)$.}  \cite{Nguyen20}[Theorem
  5.12], which states the non-convergence towards a repulsive
  equilibrium $x^*$ of $f$ such that $Df(x^*)=\lambda I$, with
  $\lambda>0$. Theorem \ref{thm:th4} can also be used to prove
  \cite{Benaim05}[Theorem 2.26] (in which it is proved that a.s. the
  empirical occupation measure of a self-interacting diffusion doesn't
  converge towards an unstable equilibrium of a vector field defined on
  the space of probability measures on a compact manifold).
\end{remark}

There are examples (see Section \ref{sec:example}) where the condition
$\sum_{s\ge t}\|\Delta{}M_s^-\|^{1+\nu}=o(\alpha_t)$ in \textit{(ii)}
of Theorem \ref{thm:th4} is not satisfied, but where \(\alpha_{t}\)
decreases towards $0$ sufficiently fast, so that \(X\) is strongly
attracted by the local unstable manifold in a neighborhood of
\(x^{*}\). The following theorem allows to deal with such situations.

When $x^*$ is an unstable equilibrium for $f$ such that $\delta^-\ge 1$,
we will denote by $\mu$ the largest real number such that every
eigenvalue of $H^-$ has a real part less or equal to $\mu$.  Then
$\mu\le 0$ and if $x^*$ is hyperbolic then one has that $\mu<0$.
\begin{theorem}\label{thm:th5}
  Let $\gamma:[0,\infty)\to [0,\infty)$ be a measurable function and
  suppose that $F_t=\gamma_t f(X_t)$, for almost all
  $t\ge 0$. For $t\ge 0$, set $m(t)=\int_0^t \gamma_s ds$ and suppose
  that
  \begin{enumerate}[{\itshape (i)}]
  \item $x^*$ is an hyperbolic unstable equilibrium of $f$ and that
    condition \eqref{hyp:h+r} of Section \ref{sec:Ft-assumption} is
    satisfied for some constant $\nu>0$. \footnote{A sufficient
      condition for \eqref{hyp:h+r} to be satisfied is that \(f\) is
      \(C^{1+\nu}\) in a neighborhood of \(x^{*}\) and that
      \(Df(x)v=0\) for all \(x\) in the local unstable manifold and
      \(v\in\Pi^{-}(H^{*})\), the vector space generated by the stable
      directions.}
  \item the function $\gamma$ is such that
    \begin{itemize}
    \item $\int_0^\infty \gamma_s ds = \infty$;
    \item
      $\lambda:=\limsup_{t\to\infty}\frac{\log(\alpha(t))}{m(t)} <0$;
    \item
      $\liminf_{t\to\infty} \frac{\log(\alpha(t))}{m(t)}>\beta (1+\nu)$,
    \end{itemize}
    where $\beta=\sup\{\lambda,\mu\}$.
  \item a.s. on $\Gamma$, conditions \eqref{hyp:<M>2} and
    \eqref{hyp:calpha} are satisfied by $M$ and $R$, condition
    \eqref{hyp:<M>1} is satisfied by $M^+$ and condition
    \eqref{hyp:jumps} is satisfied by $Y^+_t:=(P^{-1}X_t)^+$ (with
    \(M\) replaced by \((P^{-1}M)^{+}\) in~\eqref{eq:DME}).
  \end{enumerate}

  Then it holds that
  $\mathbb{P}[\Gamma\cap\{\lim_{t\to\infty}X_t=0\}]=0$.
\end{theorem}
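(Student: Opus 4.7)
The plan is to reduce Theorem~\ref{thm:th5} to the repulsive case already established by Corollary~\ref{thm:th3b}, applied to the unstable projection of $X$. Setting $Y_t := P^{-1}(X_t-x^*)$ with block decomposition $Y_t=(Y^+_t,Y^-_t)$, a Taylor expansion of $f$ around $x^*$ yields
\begin{equation*}
  dY^{\pm}_t=\gamma_t H^{\pm} Y^{\pm}_t\,dt+\gamma_t\,\phi^{\pm}(Y_t)\,dt+dM^{\pm}_t+dR^{\pm}_t,
\end{equation*}
where $\phi^{\pm}$ collects the nonlinear remainder of $P^{-1}f(x^*+P\,\cdot\,)$ at $0$. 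The strategy is two-fold: first prove that, a.s.\ on $\Gamma\cap\{\lim_{t\to\infty} X_t=x^*\}$, the stable coordinate $Y^-_t$ is attracted to $0$ so fast that its contribution to the dynamics of $Y^+$ is negligible; then view $Y^+$ as a solution of a perturbed linear equation driven by the repulsive matrix $H^+$ and apply Corollary~\ref{thm:th3b} to conclude $\mathbb{P}[\Gamma\cap\{X_t\to x^*\}]=0$.

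The first step is where assumption \eqref{hyp:h+r} is decisive. Under the sufficient condition recalled in the footnote ($Df(x)v=0$ for $x$ in the local unstable manifold and $v$ in the stable subspace), $\phi^{-}(Y_t)$ contains no pure $\|Y^+_t\|^{1+\nu}$ term and one obtains a bound of the form $\|\phi^{-}(Y_t)\|=O\bigl(\|Y^-_t\|\,\|Y_t\|^{\nu}+\|Y^+_t\|\cdot\|Y^-_t\|^{\nu}\bigr)$ near $0$, so that every $\phi^-$-contribution carries a factor $\|Y^-\|^{\nu}$. Writing $Y^-$ via the variation-of-constants formula based on the contraction $e^{(m(t)-m(s))H^-}$ of operator norm $O\bigl(e^{\mu(m(t)-m(s))}\bigr)$ (here we use $\mu<0$ by hyperbolicity), and controlling the martingale and finite-variation perturbations through \eqref{hyp:<M>2}, \eqref{hyp:calpha} and \eqref{hyp:jumps} against the scale $\alpha_t=O\bigl(e^{\lambda m(t)}\bigr)$, a Gronwall-type iteration on $\{X_t\to x^*\}$ should yield
\begin{equation*}
  \|Y^-_t\|=O\bigl(e^{\beta\, m(t)}\bigr),\qquad \beta=\sup\{\lambda,\mu\}.
\end{equation*}
This sharp attraction rate towards the unstable manifold is the technical core of the argument.

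For the second step, substitute this bound in the equation for $Y^+$. Since $\|\phi^+(Y_s)\|=O(\|Y_s\|^{1+\nu})$, the contribution of $Y^-$ is
\begin{equation*}
  \int_t^{\infty}\gamma_s\|Y^-_s\|^{1+\nu}\,ds=O\bigl(e^{\beta(1+\nu)m(t)}\bigr)=o(\alpha_t),
\end{equation*}
the last equality being exactly the third bullet of~(ii). The residual $\|Y^+_s\|^{1+\nu}$ term is handled by a bootstrap: on $\{X_t\to x^*\}$ one has $\|Y^+_s\|\to 0$, so that for $t$ large this term is dominated by a small multiple of the martingale scale $\alpha_s$ furnished by \eqref{hyp:<M>1}-\eqref{hyp:<M>2}. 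Defining $\tilde R^+_t:=R^+_t+\int_0^t\gamma_s\phi^+(Y_s)\,ds$ one then verifies $V(\tilde R^+,(t,\infty))=o(\alpha_t)$, so that $Y^+$ obeys $dY^+_t=\gamma_t H^+ Y^+_t\,dt+dM^+_t+d\tilde R^+_t$ and all the hypotheses of Corollary~\ref{thm:th3b} with $H_t\equiv H^+$ repulsive; non-convergence of $Y^+_t$ to $0$ follows, hence the theorem.

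The main obstacle is closing the bootstrap between the two steps: the rate estimate on $Y^-$ must be derived on $\{X_t\to x^*\}$ where a priori only $\|Y^+_t\|=o(1)$ is known, whereas the absorption of $\phi^+$ into $\tilde R^+$ asks for control of $\|Y^+_t\|$ at the scale $\alpha_t$. I would handle this by working block-wise on $[\tau^n(t),\tau^{n+1}(t)]$, where \eqref{hyp:<M>1} already couples $\alpha$ to $\langle M^+\rangle$; stopping at the first exit of $Y^+$ from a ball of radius $C\alpha$ and exploiting the $C^{1+\nu}$ regularity together with the strict gap $\beta(1+\nu)<\lambda$ (which leaves a definite margin between the noise level $\alpha_t$ and the worst nonlinear error $e^{\beta(1+\nu)m(t)}$), one can iterate the estimate until the bootstrap closes. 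Once this coupling is in place the reduction to Corollary~\ref{thm:th3b} is immediate.
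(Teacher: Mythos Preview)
Your two–step strategy (exponential attraction to the unstable manifold, then reduction of the $Y^+$–equation to the repulsive case) is exactly the paper's. However, three points in your execution are off, and together they prevent the argument from closing.

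\medskip
\textbf{Missing rectification.} The local unstable manifold is not $\{y^-=0\}$ but the graph $K=\{y^-=g(y^+)\}$ with $g(0)=0$, $Dg(0)=0$. What decays at rate $\beta$ is the distance $d(X_t,K)=\|u^-_t\|$ with $u^-:=y^--g(y^+)$, not $\|Y^-_t\|$ itself. The paper works in the rectified coordinates $(u^+,u^-)=(y^+,y^--g(y^+))$ throughout. In your linear coordinates the claim that ``$\phi^-$ contains no pure $\|Y^+\|^{1+\nu}$ term'' is generally false: compute $h^-(y^+,0)$ when $g\not\equiv0$. So your variation-of-constants estimate would not yield $\|Y^-_t\|=O(e^{\beta m(t)})$.

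\medskip
\textbf{The role of \eqref{hyp:h+r}.} This assumption reads $\|k^+(u^+,u^-)-k^+(u^+,0)\|\le C_\nu\|u^-\|^{1+\nu}$: it controls how the \emph{unstable} drift depends on the distance to $K$, and is used in the second step, not the first. The first step (the $\beta$–rate for $\|u^-\|$) needs only $f\in C^1$ and hyperbolicity; the paper obtains it by showing that after the time change $t\mapsto m(t)$ the process is a $\lambda$–asymptotic pseudotrajectory of the flow of $f$ and then adapting Bena\"im's shadowing lemma (Lemma~\ref{lem:beta-attract}).

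\medskip
\textbf{The bootstrap is unnecessary.} This is the crux. The paper does \emph{not} linearize the $Y^+$–equation. It writes
\[
Y^+_t=Y^+_0+\int_0^t\gamma_s\,\mathfrak{h}(Y^+_{s-})\,ds+M^+_t+R^{Y^+}_t,
\qquad \mathfrak{h}(u^+):=k^+(u^+,0),
\]
with $R^{Y^+}_t=R^+_t+\int_0^t\gamma_s\bigl(h^+(Y_{s-})-\mathfrak{h}(Y^+_{s-})\bigr)\,ds$. By \eqref{hyp:h+r} the integrand is $O(\|u^-_{s-}\|^{1+\nu})$, hence $V(R^{Y^+},(t,\infty))=O\bigl(e^{(\beta+\varepsilon)(1+\nu)m(t)}\bigr)=o(\alpha_t)$ by the third bullet of~\textit{(ii)}. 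Since $\mathfrak{h}$ is $C^1$ with $D\mathfrak{h}(0)=H^+$ repulsive, Theorem~\ref{thm:th4}\textit{(i)} applies directly: it only needs $Y^+_t\to0$, no rate on $Y^+$. There is no residual $\|Y^+\|^{1+\nu}$ term, and the ``main obstacle'' you describe disappears.
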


\begin{remark}
  When $\lambda=\lim_{t\to\infty} \frac{\log(\alpha(t))}{m(t)}$, the
  condition
  $\liminf_{t\to\infty} \frac{\log(\alpha(t))}{m(t)}>\beta (1+\nu)$
  becomes $\lambda>\beta (1+\nu)$ which is equivalent to
  $\lambda> \mu (1+\nu)$ (since $\beta=\sup\{\lambda,\mu\}$ and
  $\nu>0$).  For example, when $\alpha_t=t^{-1/2}$ and
  $\gamma_t=(t+1)^{-1}$, we have $m(t)=\log(t+1)$ and
  $\lambda=-1/2$. And this condition becomes $\mu (1+\nu)<-1/2$.
\end{remark}

Conditions \textit{(i)} and \textit{(ii)} ensure that a.s. on
$\Gamma\cap\{\lim_{t\to\infty}X_t=x^*\}$, $X$ is attracted sufficiently
fast by the unstable manifold (see Lemma \ref{lem:beta-attract}).  Then
$X$ will be close after a large time to the stochastic process $Y^+$ for
which condition \textit{(i)} of Theorem \ref{thm:th4} will be satisfied.
This will allow to prove the almost sure non-convergence of $X$ towards
$x^*$ on $\Gamma$.

\subsection{Non-convergence results for discrete-time stochastic
  processes}
\label{sec:thmdt}

The question of avoiding traps for random processes is usually studied
for stochastic algorithms, i.e. for $(X_n)_{n\in \mathbb{N}}$ a random
sequence in $\mathbb{R}^d$, adapted to a filtration
$(\mathcal{F}_n)_{n\in\mathbb{N}}$, such that for all $n\ge 0$,
\begin{equation}\label{eq:Xn}
  X_{n+1}-X_n=\gamma_{n+1}G_n + c_{n+1}(\varepsilon_{n+1}+r_{n+1}),
\end{equation}
where $(\gamma_n)_{n\ge 0}$ and $(c_n)_{n\ge 0}$ are non-negative
deterministic sequences, $(\varepsilon_n)_{n\ge 0}$, $(r_n)_{n\ge 0}$
and $(G_n)_{n\ge 0}$ are adapted sequences such that for all $n\ge 0$,
$\mathbb{E}[\varepsilon_{n+1}|\mathcal{F}_n]=0$ and
$\mathbb{E}[\|\varepsilon_{n+1}\|^2|\mathcal{F}_n]<\infty$.  We will
assume that $c_n\ne 0$ infinitely often and that $\sum_n c_n^2<\infty$.

The results stated in this section are all corollaries of the theorems
stated in Section~\ref{sec:thmct}, these theorems being applied to the
time-continuous stochastic process \(X\) defined below. For $t\ge 0$
and $n=\lfloor t\rfloor$, set $\mathcal{F}_t=\mathcal{F}_{n}$. Then
$(\mathcal{F}_t)_{t\ge 0}$ is a complete right-continuous filtration.
Let us define the càdlàg processes \(F\), $M$, $R$ and $X$ such that
for all $n\ge 0$ and $t\in [n,n+1)$,
\begin{equation}
  F_t=\gamma_{n+1}G_n,\qquad M_t=\sum_{k=1}^n c_{k}\varepsilon_{k}, \qquad R_t=\sum_{k=1}^n c_{k} r_{k}
\end{equation}
and
\begin{align}
    & X_t=X_n+\int_n^t F_s ds.
\end{align}
Then \eqref{eq:Z} is satisfied by $X$, $F$, $M$ and $R$.

The details of the proofs of the following theorems are given in Section
\ref{sec:discrete}.

\medskip{} Theorem~\ref{thm:th2} implies the following theorem that
extends Theorem 1 in \cite{Tarres00}.
\begin{theorem}\label{thm:th2n}
  Let $\Gamma$ be an event and suppose that there are a finite random
  variable $T_0$, $a>2$ and $\rho>0$ such that a.s. on $\Gamma$,
  \begin{align}
    \label{hyp:Fn}
    & \langle X_{n}, G_n\rangle \ge 0,\quad \hbox{ for all $n\ge T_0$
      such that } \|X_{n}\|<\rho,\\
    \label{hyp:epsn} & 0<\liminf \mathbb{E}[\|\varepsilon_{n}\|^2|\mathcal{F}_{n-1}] \quad\hbox{ and }\quad \limsup \mathbb{E}[\|\varepsilon_{n}\|^a|\mathcal{F}_{n-1}]<\infty, \\
    \label{hyp:rn}& \sum_n \|r_n\|^2 < \infty.
  \end{align}
  Then it holds that
  $\mathbb{P}[\Gamma\cap\{\lim_{n\to\infty}X_n=0\}]=0$.
\end{theorem}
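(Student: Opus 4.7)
The plan is to apply Theorem~\ref{thm:th2} to the continuous-time embedding $X$ of the sequence $(X_n)_n$ described in Section~\ref{sec:thmdt}. I take $\alpha_t^2:=\sum_{k>t}c_k^2$ (càdlàg non-increasing, tending to $0$ by $\sum_n c_n^2<\infty$), $\tau(t):=t+1$ (so that $\tau^n(t)\to\infty$), $\kappa(p):=p^{(a-2)/a}$ (which satisfies $\kappa(p)\ge p$ on $[0,1]$ since $(a-2)/a\in(0,1)$), and $E_t:=K\alpha_t^2$ for a suitable random constant $K$.

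A preliminary reduction takes care of the drift condition \eqref{hyp:F1}. By Jensen and \eqref{hyp:epsn}, $\mathbb{E}[\|\varepsilon_n\|^2|\mathcal{F}_{n-1}]$ is eventually bounded on $\Gamma$, hence the martingale $\sum_k c_k\varepsilon_k$ has finite predictable quadratic variation and converges a.s., yielding $c_n\varepsilon_n\to 0$. Cauchy--Schwarz and \eqref{hyp:rn} give $\sum_n c_n\|r_n\|<\infty$, hence $c_n r_n\to 0$. Therefore on $\Gamma\cap\{\lim_n X_n=0\}$ the recurrence forces $\gamma_{n+1}G_n\to 0$; combined with $X_t=X_n+(t-n)\gamma_{n+1}G_n$ on $[n,n+1)$, this gives $\lim_{t\to\infty}X_t=0$. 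In particular $\Gamma\cap\{\lim_n X_n=0\}=\Gamma\cap\{\lim_n X_n=0\}\cap\{\lim_t X_t=0\}$, and it suffices to apply Theorem~\ref{thm:th2} with $\Gamma':=\Gamma\cap\{\lim_n X_n=0\}$ in place of $\Gamma$.

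On $\Gamma'$, the convergence $\gamma_{n+1}G_n\to 0$ yields a random $T_1\ge T_0$ with $\gamma_{n+1}\|G_n\|<\rho/4$ for $n\ge T_1$. For $t\in(n,n+1)$ with $n\ge T_1$ and $\|X_{t-}\|<\rho/4$, the triangle inequality gives $\|X_n\|<\rho/2<\rho$, so \eqref{hyp:Fn} yields $\langle X_n, G_n\rangle\ge 0$ and
\begin{equation*}
  \langle X_{t-},F_t\rangle = \gamma_{n+1}\langle X_n,G_n\rangle + (t-n)\gamma_{n+1}^2\|G_n\|^2 \ge 0,
\end{equation*}
establishing \eqref{hyp:F1} with $\rho/4$ in place of $\rho$. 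The bounds $0<c_0\le\mathbb{E}[\|\varepsilon_n\|^2|\mathcal{F}_{n-1}]\le C$ for large $n$ (from \eqref{hyp:epsn}) immediately give $\langle M\rangle_{t,\tau(t)}=c_{n+1}^2\mathbb{E}[\|\varepsilon_{n+1}\|^2|\mathcal{F}_n]\ge c_0 c_{n+1}^2 = c_0(\alpha_t^2-\alpha_{\tau(t)}^2)$ for $t\in[n,n+1)$ and $\langle M\rangle_{t,\infty}\le C\alpha_t^2$, proving \eqref{hyp:<M>1} and \eqref{hyp:<M>2}. Condition \eqref{hyp:calpha} follows from Cauchy--Schwarz: $V(R,(t,\infty))=\sum_{n>t}c_n\|r_n\|\le\alpha_t(\sum_{n>t}\|r_n\|^2)^{1/2}=o(\alpha_t)$.

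The main technical step is the jump estimate \eqref{hyp:jumps}. Since $\Delta M_n=c_n\varepsilon_n$ at integers and $\Delta M_t=0$ otherwise, Hölder's inequality with exponents $a/2$ and $a/(a-2)$ gives, for any stopping time $S\ge t$,
\begin{equation*}
  \mathbb{E}[\|\Delta M_S\|^2\mathbf{1}_{S<\infty}|\mathcal{F}_t] \le \mathbb{E}[\|\Delta M_S\|^a\mathbf{1}_{S<\infty}|\mathcal{F}_t]^{2/a}\,\mathbb{P}[S<\infty|\mathcal{F}_t]^{(a-2)/a}.
\end{equation*}
Expanding the first factor as a sum over integer jump times and conditioning at stage $n-1$, the fact that $\{S\ge n\}\in\mathcal{F}_{n-1}$ together with \eqref{hyp:epsn} give $\mathbb{E}[\|\Delta M_S\|^a\mathbf{1}_{S<\infty}|\mathcal{F}_t] \le C\sum_{n>t}c_n^a \le C\alpha_t^{a-2}\sum_{n>t}c_n^2 = C\alpha_t^a$, using $c_n\le\alpha_t$ for $n>t$. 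Thus the first factor is $O(\alpha_t^2)$, so \eqref{hyp:jumps} holds with $E_t=O(\alpha_t^2)$. Theorem~\ref{thm:th2} then yields $\mathbb{P}[\Gamma'\cap\{\lim_t X_t=0\}]=0$, which coincides with $\mathbb{P}[\Gamma\cap\{\lim_n X_n=0\}]$. The main obstacle is the interplay between the discrete hypothesis \eqref{hyp:Fn} on $X_n$ and the continuous-time formulation on $X_{t-}$, which is resolved by the preliminary reduction to $\Gamma'$; the condition $a>2$ enters only through the Hölder step and the conversion between $\sum_{n>t}c_n^a$ and $\alpha_t^a$.
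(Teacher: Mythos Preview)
Your proof is correct and follows essentially the same approach as the paper: embed in continuous time with $\alpha_t^2=\sum_{k>t}c_k^2$, $\tau(t)=t+1$, $\kappa(p)=p^{1-2/a}$, reduce to $\Gamma\cap\{\lim_n X_n=0\}$ so that $\gamma_{n+1}G_n\to 0$ (hence \eqref{hyp:F1}), and verify \eqref{hyp:<M>1}--\eqref{hyp:jumps} directly. The only minor differences are that the paper first decomposes $\Gamma=\bigcup_{n_0,k}\Gamma_{n_0,k}$ with $\Gamma_{n_0,k}=\Gamma\cap\{\mathbb{E}[\|\varepsilon_n\|^a\mid\mathcal{F}_{n-1}]\le k^a\text{ for all }n\ge n_0\}$, which makes the constant in $E_t$ deterministic (so that $E$ is genuinely adapted, a point you gloss over with your random $K$), and obtains the jump bound via Lemma~\ref{lem:E_t} (H\"older applied term-by-term over jump times) rather than your single application of H\"older to $\mathbb{E}[\|\Delta M_S\|^2\mathbf 1_{S<\infty}\mid\mathcal{F}_t]$.
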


\textcolor{black}{Proposition~\ref{thm:th22}} implies the following proposition:
\begin{proposition}\label{thm:th22n}
  Let $\Gamma$ be an event.  Suppose that $\gamma_n=O(c_n)$, that
  \(r=r'+r''\) where \(r'\) is adapted and \(r''\) is predictable and
  that there are a finite random variable $T_0$, $a>2$, $\beta>0$,
  $\rho>0$ and $\nu>\frac12$ such that a.s. on $\Gamma$, condition
  \eqref{hyp:epsn} is satisfied and
  \begin{align}
    & \langle G_n,X_n\rangle \ge \beta \|X_n\|^2 \quad \hbox{ for all  }
      n\geq{T_{0}} \hbox{ such that }\quad \|X_n\|< \rho, \label{hyp:G22n}\\
    & \sum_n \|r'_n\|^2<\infty \quad \hbox{and} \quad r''_n=O\left(\frac{\gamma^{1+\nu}_n}{c_{n}}\right).\label{hyp:r22n}
  \end{align}
  Then it holds that
  $\mathbb{P}[\Gamma\cap\{\lim_{n\to\infty} X_n=0\}$.
\end{proposition}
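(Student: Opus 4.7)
The plan is to derive Proposition~\ref{thm:th22n} as a corollary of its continuous-time counterpart, Proposition~\ref{thm:th22}, applied to the continuous-time process $X_t$ built from $(X_n)_{n\ge 0}$ as in the paragraph preceding Theorem~\ref{thm:th2n}. Since $X_t$ coincides with $X_n$ at integer times, $\{\lim_n X_n=0\}=\{\lim_t X_t=0\}$, so it suffices to work in continuous time.

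The central trick is to exploit the predictability of $r''$ in order to absorb its contribution into the drift, leaving $R$ to carry only the $r'$-part. For $t\in[n,n+1)$ I set
\[
  F_t := \gamma_{n+1}G_n + c_{n+1}r''_{n+1},\qquad M_t := \sum_{k\le\lfloor t\rfloor}c_k\varepsilon_k,\qquad R_t := \sum_{k\le\lfloor t\rfloor}c_k r'_k,
\]
so that \eqref{eq:Z} holds for this decomposition. I then take $\alpha_t^2 := \sum_{k>\lfloor t\rfloor}c_k^2$, $\tau(t):=t+1$, and the continuous-time drift rate $\tilde{\gamma}_t := (\beta/2)\gamma_{\lfloor t\rfloor+1}$; the growth assumption $\int_t^\infty \tilde{\gamma}_s^2\,ds = O(\alpha_t^2)$ of Proposition~\ref{thm:th22} follows immediately from $\gamma_n=O(c_n)$.

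The martingale conditions \eqref{hyp:<M>1} and \eqref{hyp:<M>2} follow from $\langle M\rangle_{t,\tau(t)}=c_{\lfloor t\rfloor+1}^2\mathbb{E}[\|\varepsilon_{\lfloor t\rfloor+1}\|^2\mid \mathcal{F}_{\lfloor t\rfloor}]$ together with the two-sided bound on conditional second moments provided by \eqref{hyp:epsn} (the upper bound follows from Jensen applied to the $a$-th-moment control, using $a>2$). For the jump condition \eqref{hyp:jumps} I pick $\kappa(p):=p^{1-2/a}$ and use Hölder's inequality with exponents $a/2$ and $a/(a-2)$ applied to $\mathbb{E}[\|\Delta M_S\|^2\mathbf{1}_{S<\infty}\mid\mathcal{F}_t]=\mathbb{E}[c_S^2\|\varepsilon_S\|^2\mathbf{1}_{S<\infty}\mid\mathcal{F}_t]$ to obtain $E_t = C\sup_{k>\lfloor t\rfloor}c_k^2 = O(\alpha_t^2)$. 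The variation condition \eqref{hyp:calpha2} reduces to Cauchy--Schwarz: $V(R,(t,\infty))\le \sum_{n>t}c_n\|r'_n\|\le \alpha_t\bigl(\sum_{n>t}\|r'_n\|^2\bigr)^{1/2}=o(\alpha_t)$.

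The hard part will be verifying \eqref{hyp:F12}: the quadratic drift bound $\langle X_{t-},F_t\rangle\ge \tilde{\gamma}_t\|X_{t-}\|^2$ on $\{\|X_{t-}\|<\rho\}$. For $t\in(n,n+1)$, $X_{t-}=X_n+(t-n)F_t$, so
\[
  \langle X_{t-},F_t\rangle \ge \gamma_{n+1}\langle X_n,G_n\rangle + c_{n+1}\langle X_n,r''_{n+1}\rangle \ge \gamma_{n+1}\bigl(\beta\|X_n\|^2 - C\gamma_{n+1}^{\nu}\|X_n\|\bigr)
\]
by \eqref{hyp:G22n} and $c_{n+1}\|r''_{n+1}\| = O(\gamma_{n+1}^{1+\nu})$ from \eqref{hyp:r22n}. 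The transfer to $\|X_{t-}\|^2$ uses that on $\{\|X_n\|<\rho\}$ the identity $\gamma_{n+1}G_n = (X_{n+1}-X_n)-c_{n+1}(\varepsilon_{n+1}+r_{n+1})$, combined with the localization on $\{X_n\to 0\}$ and the a.s.\ convergence $c_{n+1}\|\varepsilon_{n+1}\|+c_{n+1}\|r_{n+1}\|\to 0$, forces $\|F_t\|\to 0$; hence $\|X_{t-}\|=(1+o(1))\|X_n\|$ eventually. The residual $C\gamma_{n+1}^{\nu}\|X_n\|$ is absorbed into $\tilde{\gamma}_t\|X_{t-}\|^2\approx (\beta/2)\gamma_{n+1}\|X_n\|^2$ as soon as $\|X_n\|\gtrsim \gamma_{n+1}^{\nu}$; the complementary regime is ruled out using the uniform lower bound on $\mathbb{E}[\|\varepsilon_n\|^2\mid\mathcal{F}_{n-1}]$ in \eqref{hyp:epsn}, which together with $\nu>1/2$ (so $\gamma_n^\nu$ decays slower than $\sqrt{\sum_{k>n}c_k^2}$) prevents $X_n$ from lingering in $o(\gamma_n^\nu)$-neighborhoods of $0$. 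Proposition~\ref{thm:th22} then concludes $\mathbb{P}[\Gamma\cap\{\lim_t X_t=0\}]=0$, which is the stated claim.
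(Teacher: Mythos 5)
Your reductions to continuous time, and your verifications of \eqref{hyp:<M>1}, \eqref{hyp:<M>2}, \eqref{hyp:jumps} and \eqref{hyp:calpha2}, match the paper's. The genuine gap is in your treatment of $r''$. You absorb $c_{n+1}r''_{n+1}$ into the drift $F_t$ and then try to verify the pointwise inequality \eqref{hyp:F12}, $\langle X_{t-},F_t\rangle\ge\tilde\gamma_t\|X_{t-}\|^2$. But the term $c_{n+1}\langle X_{t-},r''_{n+1}\rangle$ is only controlled by $C\gamma_{n+1}^{1+\nu}\|X_{t-}\|$, which dominates $\tilde\gamma_t\|X_{t-}\|^2$ whenever $\|X_{t-}\|\lesssim\gamma_{n+1}^{\nu}$; in that regime \eqref{hyp:F12} simply fails for your modified $F$. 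Your proposed fix --- that the lower bound on $\mathbb{E}[\|\varepsilon_n\|^2\,|\,\mathcal{F}_{n-1}]$ in \eqref{hyp:epsn} ``prevents $X_n$ from lingering in $o(\gamma_n^\nu)$-neighborhoods of $0$'' --- is not an argument: no such statement is proved, and it is essentially of the same nature as the non-convergence assertion you are trying to establish (it is the content of the excursion estimates in Lemma~\ref{lem:lem1}). Moreover \eqref{hyp:F12} is a hypothesis that must hold surely on $\Gamma$ for (almost) all $t\ge T_0$ with $\|X_{t-}\|<\rho$; you cannot discharge it by a probabilistic claim that the bad times are rare. As written, Proposition~\ref{thm:th22} cannot be invoked.

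The paper avoids this entirely by \emph{not} moving $r''$ into $F$: it keeps $F_t=\gamma_{n+1}G_n$ and writes $R_t=R'_t+\int_0^t r''_s\,ds$ with $r''_t:=c_{n+1}r''_{n+1}$ on $[n,n+1)$, so that the hypothesis to check for this term is \eqref{hyp:calpha3}, i.e. $r''_t=O(\gamma_t^{1+\nu})$, which is exactly \eqref{hyp:r22n}. The small-$\|X\|$ regime is then handled \emph{inside} the proof of Proposition~\ref{thm:th22} (splitting on $\|X_{u-}\|\ge k\gamma_u^\nu$ versus $\|X_{u-}\|<k\gamma_u^\nu$ and using $\nu>\tfrac12$ to show the loss is $o(\alpha_t^2)$), precisely because it cannot be excluded pointwise. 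A secondary, repairable issue: your claim $\|X_{t-}\|=(1+o(1))\|X_n\|$ also fails when $\|X_n\|$ is small compared with $\gamma_{n+1}\|G_n\|$; the paper instead uses $\|X_{t-}\|^2\le 2(\|X_n\|^2+\gamma_{n+1}^2\|G_n\|^2)$ together with the positive cross term $(t-n)\gamma_{n+1}^2\|G_n\|^2$ appearing in $\langle X_{t-},F_t\rangle$, and the normalization $\beta\gamma_{n+1}\le 1$.
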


\begin{remark}\label{rk:th22n}
  When $X$ is a non-negative one-dimensional process it suffices to
  suppose that $r$ is such that $r\ge r'+r''$, with $r'$ and $r''$
  such that a.s. on $\Gamma$, $\sum_n\|r'_n\|^2<\infty$ and
  $c_nr''_n=O(\gamma^{1+\nu}_n)$.
\end{remark}

\textcolor{black}{Corollary~\ref{thm:th3b} (or Theorem~\ref{thm:th2n})}
implies the following corollary that would correspond to
\cite{Brandiere96}[Proposition 4] (with different assumptions, but with
a correct proof).

\begin{corollary}\label{thm:th3bd}
  Let $\Gamma$ be an event. For each $n\ge 0$, let $H_n$ be an
  $\mathcal{F}_n$-measurable $\mathcal{M}_d(\mathbb{R})$-valued random
  matrix.  Let $H\in \mathcal{M}_d(\mathbb{R})$ be a repulsive matrix.
  Suppose that there are $a>2$ and a finite random variable $T_0$ such
  that a.s. on $\Gamma$,
  \begin{itemize}
  \item $G_n=H_nX_n$ for all $n\ge T_0$;
  \item $\lim_{n\to\infty}H_n=H$;
  \item conditions \eqref{hyp:epsn} and \eqref{hyp:rn} are satisfied.
  \end{itemize}
  Then it holds that
  $\mathbb{P}[\Gamma\cap\{\lim_{n\to\infty}X_n=0\}]=0$.
\end{corollary}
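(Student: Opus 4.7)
The plan is to deduce Corollary~\ref{thm:th3bd} from Theorem~\ref{thm:th2n} after replacing the ambient inner product on $\mathbb{R}^d$ by a Lyapunov inner product adapted to the repulsive matrix $H$. The whole proof is really just a transfer of hypotheses through an equivalent norm, plus the classical fact that a repulsive matrix admits a strict Lyapunov form.

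First, since $H$ is repulsive, there exist a symmetric positive definite matrix $P$ and a constant $c>0$ such that
\[
\langle x, Hx\rangle_P := x^T P H x \;\ge\; c\, x^T P x \;=\; c\|x\|_P^2 \qquad \forall\, x \in \mathbb{R}^d,
\]
where $\langle\cdot,\cdot\rangle_P$ and $\|\cdot\|_P$ are the inner product and norm induced by $P$. One concrete recipe: pick $0<c<\min\{\mathrm{Re}(\lambda):\lambda\text{ eigenvalue of }H\}$ and set $P=\int_0^\infty e^{-t(H-cI)^T}e^{-t(H-cI)}\,\mathrm{d}t$, which converges since $H-cI$ is still repulsive; differentiating gives $H^T P + P H \ge 2cP$, hence the claimed bound. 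This is the one genuinely nontrivial ingredient, and it is classical.

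Now we work a.s. on $\Gamma$. The convergence $H_n\to H$ implies that there is a finite random variable $T_1\geq T_0$ with $\|H_n-H\|_{\mathrm{op},P}\le c/2$ for all $n\ge T_1$, where $\|\cdot\|_{\mathrm{op},P}$ denotes the operator norm subordinate to $\|\cdot\|_P$. Combined with $G_n=H_n X_n$ (valid for $n\ge T_0$) and the Lyapunov bound, this gives
\[
\langle X_n, G_n\rangle_P \;=\; \langle X_n, HX_n\rangle_P + \langle X_n, (H_n-H)X_n\rangle_P \;\ge\; \tfrac{c}{2}\|X_n\|_P^2 \;\ge\; 0
\]
for all $n\ge T_1$, with no smallness restriction on $\|X_n\|_P$. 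In particular, hypothesis~\eqref{hyp:Fn} of Theorem~\ref{thm:th2n} is verified with respect to $\langle\cdot,\cdot\rangle_P$, for any choice of $\rho>0$.

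The remaining hypotheses \eqref{hyp:epsn} and \eqref{hyp:rn} are norm-based conditions (involving $\|\varepsilon_n\|^2$, $\|\varepsilon_n\|^a$, and $\|r_n\|^2$), and since any two norms on $\mathbb{R}^d$ are equivalent, they carry over from the original inner product to $\langle\cdot,\cdot\rangle_P$ with possibly modified constants—irrelevant, as only orders of magnitude are required. Applying Theorem~\ref{thm:th2n} relative to $\langle\cdot,\cdot\rangle_P$ then yields $\mathbb{P}[\Gamma\cap\{\lim_{n\to\infty}X_n=0\}]=0$, which is the desired conclusion. Aside from invoking the Lyapunov inner product, no serious obstacle arises.
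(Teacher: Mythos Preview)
Your proof is correct and follows essentially the route the paper itself signals as the natural one: the paper's proof of Corollary~\ref{thm:th3bd} opens by noting that it ``can easily be proved using Theorem~\ref{thm:th2n} as is proved Corollary~\ref{thm:th3b} using Theorem~\ref{thm:th2},'' and that is exactly what you do (the paper's Lemma~\ref{lem:repulsive} constructs the Lyapunov inner product via Jordan reduction rather than your integral formula, but this is cosmetic). The paper then chooses instead to exhibit the alternative derivation directly from the continuous-time Corollary~\ref{thm:th3b}, embedding the discrete process and building $H_t$ for non-integer $t$ via $H_t=H_n(I+(t-n)\gamma_{n+1}H_n)^{-1}$; this buys consistency with the paper's continuous-time-first architecture, while your direct argument is shorter and self-contained.
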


Let $x^*\in \mathbb{R}^d$ and $f:\mathbb{R}^d\to\mathbb{R}^d$ be a
function such that $f$ is $C^{1}$ in a convex neighborhood
$\mathcal{N}^*$ of $x^*$.  Suppose that for all $n\ge 0$, $G_n=f(X_n)$
and that $x^*$ is an unstable equilibrium for $f$.

Let us recall the notations from Section \ref{sec:nonCVunstable}: There
are an integer $\delta^+\in \{1,\dots,d\}$, an invertible matrix
$P\in\mathcal{M}_d(\mathbb{R})$, a repulsive matrix
$H^+\in\mathcal{M}_{\delta^+}(\mathbb{R})$ and a non-repulsive matrix
$H^-\in\mathcal{M}_{\delta^-}(\mathbb{R})$ (with $\delta^-:=d-\delta^+$)
such that
\begin{itemize}
\item $P^{-1} Df(x^*) P = \hbox{diag}[H^+,H^-]$;
\item The eigenvalues of $H^+$ all have a positive real part and the
  eigenvalues of $H^-$ all have a non-positive real part.
\end{itemize}
For $x\in\mathbb{R}^d$, set $y:=P^{-1}x$. There is
$(y^+,y^-)\in\mathbb{R}^{\delta^+}\times\mathbb{R}^{\delta^-}$ such that
$y=\begin{pmatrix} y^+\\y^-\end{pmatrix}$ and
$P^{-1} Df(x^*) Py=\begin{pmatrix} H^+y^+\\H^-y^-\end{pmatrix}$.  For
$n\ge 0$, set $\varepsilon_n^\pm=(P^{-1}\varepsilon_n)^\pm$ and
$r_n^\pm=(P^{-1}r_n)^\pm$.

\medskip The discrete time version of Theorem \ref{thm:th4} is the
following one. 

\begin{theorem}\label{thm:th4d}
  Let $\Gamma$ be an event and $x^*$ be an unstable equilibrium.
  Suppose that for some $a>2$, a.s. on $\Gamma$,
  \begin{align}
    \label{hyp:epsnd} & 0<\liminf \mathbb{E}[\|\varepsilon^+_{n}\|^2|\mathcal{F}_{n-1}] \quad\hbox{ and }\quad \limsup \mathbb{E}[\|\varepsilon_{n}\|^a|\mathcal{F}_{n-1}]<\infty. \\
    \label{hyp:rnd}& \sum_n \|r_n\|^2 < \infty.
  \end{align} 
  Set $\alpha(t)=\sqrt{\sum_{n> t} c_n^2}$ and suppose that one of the
  two following conditions is satisfied
  \begin{enumerate}[(i)]
  \item $x^*$ is repulsive;
  \item $\sum_{n>t}\gamma_{n}^2=O(\alpha(t))$ (which is satisfied if
    $\gamma_n=0(c_n)$) and there is $\nu\in (0,1]$ such that
    $f\in C^{1+\nu}(\mathcal{N}^*)$ and that
    $\sum_{n>t} c_n^{1+\nu} \|\varepsilon^-_n\|^{1+\nu}=o(\alpha(t))$ on
    $\Gamma\cap\{\lim X_n=x^*\}$.
  \end{enumerate}
  Then it holds that
  $\mathbb{P}[\Gamma\cap\{\lim_{n\to\infty}X_n=x^*\}]=0$.
\end{theorem}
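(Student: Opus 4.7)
The plan is to deduce Theorem~\ref{thm:th4d} from the continuous-time results of Section~\ref{sec:thmct} by applying them to the càdlàg embedding $(X_t,F_t,M_t,R_t)$ constructed immediately before Theorem~\ref{thm:th2n}. After translating so that $x^{*}=0$, I would take $\alpha(t):=\sqrt{\sum_{n>t}c_n^2}$, $\tau(t):=t+1$, $\kappa(p):=p^{1-2/a}$, and the piecewise-constant extension $\gamma_t:=\gamma_{\lfloor t\rfloor+1}$ to convert the step drift of the embedding into a form suitable for Theorem~\ref{thm:th4}.

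I would first check the hypotheses shared by both cases. Conditions \eqref{hyp:<M>1} and \eqref{hyp:<M>2} for $M^+$ reduce to $c_n^2\asymp c_n^2\mathbb{E}[\|\varepsilon_n^+\|^2|\mathcal{F}_{n-1}]$, which follows from the $\liminf$/$\limsup$ bounds in \eqref{hyp:epsnd} (the upper bound on the second moment coming from Hölder applied to the $a$-moment estimate). For \eqref{hyp:calpha}, Cauchy--Schwarz gives $V(R,(t,\infty))=\sum_{n>t}c_n\|r_n\|\le\alpha(t)(\sum_{n>t}\|r_n\|^2)^{1/2}=o(\alpha(t))$ by \eqref{hyp:rnd}. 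For the jump condition \eqref{hyp:jumps}, given a stopping time $S>t$, conditioning $\|\varepsilon_n\|^2\mathbf{1}_{S=n}$ on $\mathcal{F}_{n-1}$ and applying conditional Hölder with exponents $a/2$ and $a/(a-2)$, then Jensen for the concave map $p\mapsto p^{1-2/a}$, and finally discrete Hölder in $n$, yields
\[
\mathbb{E}[\|\Delta M_S\|^2\mathbf{1}_{S<\infty}|\mathcal{F}_t]\le C\Bigl(\sum_{n>t}c_n^a\Bigr)^{2/a}\mathbb{P}[S<\infty|\mathcal{F}_t]^{1-2/a}.
\]
The elementary bounds $\sum_{n>t}c_n^a\le(\sup_{n>t}c_n)^{a-2}\alpha^2(t)$ and $\sup_{n>t}c_n\le\alpha(t)$ then give $E_t=O(\alpha^2(t))$.

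For case \textit{(i)} I would sidestep Theorem~\ref{thm:th4}(i) and instead apply Corollary~\ref{thm:th3bd}: setting $H_n:=\int_0^1 Df(sX_n)\,ds$ one has $f(X_n)=H_nX_n$, and on $\Gamma\cap\{\lim X_n=0\}$, $H_n\to Df(0)=:H$, which is repulsive by hypothesis. Since this is a pure linearization argument the discrete-to-continuous drift discrepancy described below never appears, which explains why no hypothesis on $\sum\gamma_n^2$ is imposed in case \textit{(i)}.

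For case \textit{(ii)} I would apply Theorem~\ref{thm:th4}(ii) to the unstable component $Y^+$. The main obstacle is that the embedded drift is $F_t=\gamma_{\lfloor t\rfloor+1}f(X_{\lfloor t\rfloor})$, not $\gamma_tf(X_t)$, so I would write $F_t=\gamma_tf(X_t)+\gamma_t\bigl(f(X_{\lfloor t\rfloor})-f(X_t)\bigr)$ and push the discrepancy into an augmented finite-variation remainder $\tilde{R}$. On $\Gamma\cap\{\lim X_n=0\}$ one has $f(X_{\lfloor t\rfloor})\to 0$ and $|X_t-X_{\lfloor t\rfloor}|\le\gamma_{\lfloor t\rfloor+1}|f(X_{\lfloor t\rfloor})|$, so the correction is $o(\gamma_t^2)$ per unit time; integrating and using $\sum_{n>t}\gamma_n^2=O(\alpha(t))$ gives $V(\tilde{R}-R,(t,\infty))=o(\alpha(t))$. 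The additional jump hypothesis $\sum_{s\ge t}\|\Delta M_s^-\|^{1+\nu}=o(\alpha_t)$ translates directly into the assumption $\sum_{n>t}c_n^{1+\nu}\|\varepsilon_n^-\|^{1+\nu}=o(\alpha(t))$, while $\langle M^-\rangle_{t,\tau(t)}=O(\langle M^+\rangle_{t,\tau(t)})$ follows from \eqref{hyp:epsnd} applied to both coordinates. Theorem~\ref{thm:th4}(ii) then yields the conclusion. The trickiest piece is controlling the drift-approximation error together with the $\kappa$-bound \eqref{hyp:jumps}; both rely crucially on the $a$-moment hypothesis and on the smallness of the step sequences on the event of hypothetical convergence.
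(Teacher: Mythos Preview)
Your proposal is correct and tracks the paper's argument closely: embed into continuous time with $\tau(t)=t+1$, verify~\eqref{hyp:<M>1}--\eqref{hyp:jumps} as you describe (the paper packages your H\"older--Jensen computation for~\eqref{hyp:jumps} into Lemma~\ref{lem:E_t}), and in case~\textit{(ii)} absorb the drift discrepancy $\gamma_t\bigl(f(X_{\lfloor t\rfloor})-f(X_t)\bigr)$ into an augmented remainder $\tilde R$ before invoking Theorem~\ref{thm:th4}(ii). The one real difference is case~\textit{(i)}: the paper goes through the same $\tilde R$-correction and then applies Theorem~\ref{thm:th4}(i), whereas you bypass the embedding and appeal directly to the discrete Corollary~\ref{thm:th3bd} via the linearization $f(X_n)=H_nX_n$ with $H_n:=\int_0^1 Df(sX_n)\,ds\to Df(x^*)$. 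Your route is cleaner here, since it makes transparent why no hypothesis on $\sum_{n>t}\gamma_n^2$ is needed in the repulsive case; the paper's $\tilde R$-bound, as written, uses $o(\sum_{n>t}\gamma_n^2)=o(\alpha(t))$, a condition that is only stated for case~\textit{(ii)}. (Minor wording point: in case~\textit{(ii)} Theorem~\ref{thm:th4}(ii) is applied to $X$ itself, not to $Y^+$---the passage to the unstable component is internal to that theorem's proof.)
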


\begin{remark}
  \label{rk:duflo1}
  Theorem \ref{thm:th2n} gives a correct formulation of
  \cite{Brandiere96}[Theorem 1]. But the setting is a bit different:
  \begin{itemize}
  \item In \cite{Brandiere96}[Theorem 1],
    \begin{itemize}
    \item it is supposed that $f\in C^{1+1}(\mathcal{N}^*)$
      (i.e. $\nu=1$) and there is a stronger assumption on the step of
      the algorithm: $\gamma_n=O(c_n)$ and $\sum \gamma_n=\infty$.
    \item Assumption~\eqref{hyp:epsn} is replaced by the
      non-equivalent assumption that
      \[
        0<\liminf \mathbb{E}[\|\varepsilon^+_{n}\||\mathcal{F}_{n-1}]
        \quad\hbox{ and }\quad \limsup
        \mathbb{E}[\|\varepsilon_{n}\|^2|\mathcal{F}_{n-1}]<\infty.
      \]
    \end{itemize}
  \item In Theorem~\ref{thm:th2n}, when \(\nu=1\), we have to assume
    that $\sum_{n>t} c_n^{2} \|\varepsilon^-_n\|^{2}=o(\alpha_t)$
    a.s. on $\Gamma$.  But if one looks carefully at the proof of
    \cite{Brandiere96}[Theorem 1], a similar condition has to be
    satisfied in \cite{Brandiere96}[Section I.4] (in order to obtain,
    using the notations of \cite{Brandiere96}, that
    $\sum\|\rho_n\|^2<\infty$).  Our result here does not permit to
    obtain \cite{Brandiere96}[Théorème 1].  \footnote{ When $\nu=1$,
      the hypothesis that
      $\limsup_{n\to\infty}
      \mathbb{E}[\|\varepsilon_n\|^2|\mathcal{F}_{n-1}]<\infty$
      a.s. on $\Gamma$ that is assumed in \cite{Brandiere96} is not
      far to imply that
      $\sum_{n>t} c_n^{2} \|\varepsilon_n\|^{2}=o(\alpha(t))$ a.s. on
      $\Gamma$: if one has that
      $\limsup_{n\to\infty} \mathbb{E}[\|\varepsilon_n\|^2]<\infty$
      then
      $\mathbb{E}[\sum_{n>t} c_n^{2}
      \|\varepsilon_n\|^{2}]=O(\alpha^2(t))=o(\alpha(t)).$ The
      condition
      $\sum_{n>t} c_n^{2} \|\varepsilon_n\|^{2}=o(\alpha(t))$ will be
      satisfied if
      $\limsup_{n\to\infty} \mathbb{E}[\|\varepsilon_n\|^4]<\infty$,
      since
      $\sum_{n>t} c_n^{2} \|\varepsilon_n\|^{2}\le \alpha(t)
      \sqrt{\sum_{n>t} c_n^{2} \|\varepsilon_n\|^{4}}=o(\alpha(t))$.
    }
  \end{itemize}
\end{remark}

\begin{remark}
  \label{rk:duflo2}
  There are several inaccuracies in the proof of
  \cite{Brandiere96}[Théorème 1] (also stated in \cite{Duflo1996}) :
  (with the notations of \cite{Brandiere96})
  \begin{enumerate}
  \item In the proof of \cite{Brandiere96}[Proposition 4], the
    application of \cite{Brandiere96}[Théorème A] done in
    \cite{Brandiere96}[page 407] requires that $R_n^1$ is
    adapted. This is not the case.
  \item In \cite{Brandiere96}[page 409]: it appears that to prove that
    $\sum_n \|\rho_{n+1}\|^2<\infty$ a.s. on $\Gamma(z^*)$, one needs to
    have that $\sum c_n^2\|\epsilon_{n+1}\|^4<\infty$ a.s. which is not
    necessarily satisfied under the assumption given in
    \cite{Brandiere96}[Théorème 1].
  \item In \cite{Brandiere96}[page 424]: there is a misuse in the
    application of an inequality of Burkholder.
  \end{enumerate}
\end{remark}

\begin{remark}
  Suppose that \(c_{n}=n^{-\gamma}\) with \(\gamma\in(1/2,1]\). Then
  \(\alpha(t)\sim c_{\gamma}t^{-\gamma+\frac{1}{2}}\) with
  \(c_{\gamma}=(2\gamma-1)^{-1/2}\) and
  \begin{itemize}
  \item if \(\varepsilon^{-}_{n}=O(1)\), then
    $\sum_{n>t} c_n^{1+\nu}
    \|\varepsilon^{-}_n\|^{1+\nu}=O(t^{-\gamma(1+\nu)+1})$ and
    $\sum_{n>t} c_n^{1+\nu} \|\varepsilon^{-}_n\|^{1+\nu}=o(\alpha(t))$ as
    soon as $\gamma\nu>1/2$;
  \item if \(1=O(\|\varepsilon_{n}^{-}\|)\), then
    $t^{-\gamma(1+\nu)+1}=O(\sum_{n>t} c_n^{1+\nu}
    \|\varepsilon^{-}_n\|^{1+\nu})$ and the condition
    $\sum_{n>t} c_n^{1+\nu}
    \|\varepsilon_{n}^{-}\|^{1+\nu}=o(\alpha_t)$ requires that
    $\gamma\nu>1/2$.
  \end{itemize}
  Note that the condition $\gamma\nu>1/2$ requires that \(\nu>1/2\).
\end{remark}

\begin{remark}
  \label{rk:pemantle}
  It is an easy exercise to check that Theorem \ref{thm:th4d} implies
  \cite{Pemantle90}[Theorem 1] (taking $\nu=1$ in \textit{(ii)},
  $\gamma_{n+1}=a_n$, $c_{n+1}=n^{-\gamma}$ and
  $\varepsilon_{n+1}=\xi_n/c_{n+1}$).
\end{remark}

We now give a discrete version of Theorem \ref{thm:th5}. Recall that
when $x^*$ is an unstable equilibrium for $f$ such that
$\delta^-\ge 1$, we denote by $\mu$ the largest real number such that
every eigenvalue of $H^-$ has a real part less or equal to $\mu$.
Recall alsi that \(\mu<0\) when $x^*$ is hyperbolic.

\begin{theorem}\label{thm:th5d}
  Let $\Gamma$ be an event and $x^*\in\mathbb{R}^d$.  Suppose that
  $G_n=f(X_n)$. Set $\alpha(t)=\sqrt{\sum_{n> t} c_n^2}$ and suppose
  that
  \begin{enumerate}[(i)]
  \item $x^*$ is an hyperbolic unstable equilibrium and \eqref{hyp:h+r}
    is satisfied for some $\nu>0$.
  \item $(\gamma_n)$ and $(c_n)$ satisfy:
    \begin{itemize}
    \item $\sum_{n>t}\gamma_{n}^2=O(\alpha(t))$;
    \item $\sum_n \gamma_n = \infty$;
    \item
      $\lambda:=\limsup_{t\to\infty}\frac{\log(\alpha(t))}{\sum_{k\le t}
        \gamma_k} <0$;
    \item
      $\liminf_{t\to\infty} \frac{\log(\alpha(t))}{\sum_{k\le t}
        \gamma_k}>\beta (1+\nu)$.
    \end{itemize}
    where \(\beta=\sup\{\lambda,\mu\}\).
  \item for some $a>2$, \eqref{hyp:epsnd} and \eqref{hyp:rnd} are
    satisfied a.s. on $\Gamma$.
  \end{enumerate}
  Then $\mathbb{P}[\Gamma\cap\{\lim_{n\to\infty}X_n=x^*\}]=0$.
\end{theorem}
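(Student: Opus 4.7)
The plan is to apply the continuous-time Theorem \ref{thm:th5} to the lift $(X,F,M,R)$ defined at the start of Section \ref{sec:thmdt}. That lift satisfies $F_t=\gamma_{n+1}f(X_n)$ for $t\in[n,n+1)$, whereas Theorem \ref{thm:th5} requires $F_t=\gamma_tf(X_t)$ for some measurable $\gamma_t$. I would therefore set $\gamma_t:=\gamma_{n+1}$ for $t\in[n,n+1)$ and rewrite
\begin{equation*}
  F_t=\gamma_tf(X_t)+\gamma_t\bigl(f(X_{\lfloor t\rfloor})-f(X_t)\bigr),
\end{equation*}
absorbing the second term into $\tilde R_t:=R_t+\int_0^t\gamma_s\bigl(f(X_{\lfloor s\rfloor})-f(X_s)\bigr)\,ds$, which is still adapted with finite variation. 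The equation $X_t=X_0+\int_0^t\gamma_sf(X_s)\,ds+M_t+\tilde R_t$ is then in the form required by Theorem \ref{thm:th5}.

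Condition (i) of Theorem \ref{thm:th5} is hypothesis (i) verbatim. For condition (ii), observe that $m(t):=\int_0^t\gamma_s\,ds$ satisfies $m(t)-\sum_{k\le\lfloor t\rfloor}\gamma_k=O(\gamma_{\lfloor t\rfloor+1})$, and that $\gamma_n\to 0$ follows from $\sum_{n>t}\gamma_n^2=O(\alpha(t))\to 0$; hence $m(t)/\sum_{k\le t}\gamma_k\to 1$ as $t\to\infty$ whenever $\sum\gamma_n=\infty$, and the three conditions on $\gamma$ in Theorem \ref{thm:th5}-(ii) translate directly to the discrete conditions in hypothesis (ii).

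For condition (iii), four items must be verified. Applying Hölder to \eqref{hyp:epsnd} bounds $\mathbb{E}[\|\varepsilon_n\|^2|\mathcal{F}_{n-1}]$ uniformly on $\Gamma$ for large $n$, whence $\langle M\rangle_{t,\infty}=\sum_{n>t}c_n^2\mathbb{E}[\|\varepsilon_n\|^2|\mathcal{F}_{n-1}]\le C\alpha(t)^2$, which is \eqref{hyp:<M>2}. Taking $\tau(t)=t+1$ one computes, with $n=\lfloor t\rfloor$, that $\alpha(t)^2-\alpha(t+1)^2=c_{n+1}^2$ and $\langle M^+\rangle_{t,t+1}=c_{n+1}^2\mathbb{E}[\|\varepsilon_{n+1}^+\|^2|\mathcal{F}_n]$; the liminf in \eqref{hyp:epsnd} then yields \eqref{hyp:<M>1}. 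For \eqref{hyp:calpha} on $\tilde R$, Cauchy--Schwarz gives $V(R,(t,\infty))\le\alpha(t)\sqrt{\sum_{n>t}\|r_n\|^2}=o(\alpha(t))$ by \eqref{hyp:rnd}, while the local Lipschitz continuity of $f$ near $x^*$ combined with $\|X_s-X_{\lfloor s\rfloor}\|\le\gamma_{n+1}\|f(X_n)\|$ on $[n,n+1)$ bounds the variation of the correction by a constant times $\sum_{n>t}\gamma_{n+1}^2\|f(X_n)\|$; on $\{\lim X_n=x^*\}$, where $\|f(X_n)\|\to 0$, this is $o(\sum_{n>t}\gamma_n^2)=o(\alpha(t))$ by the hypothesis $\sum_{n>t}\gamma_n^2=O(\alpha(t))$. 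Finally, for \eqref{hyp:jumps} on $Y^+$, the jumps of $(P^{-1}M)^+$ occur at integers $n$ with $\|\Delta(P^{-1}M)^+_n\|\le\|P^{-1}\|c_n\|\varepsilon_n\|$, and for any stopping time $S\ge t$, using $\{S\ge n\}\in\mathcal{F}_{n-1}$ and the bounded conditional second moment,
\begin{equation*}
  \mathbb{E}\bigl[\|\Delta(P^{-1}M)^+_S\|^21_{S<\infty}\bigm|\mathcal{F}_t\bigr]\le C\sum_{n>t}c_n^2\,\mathbb{P}[S\ge n|\mathcal{F}_t]\le C\alpha(t)^2\,\mathbb{P}[S<\infty|\mathcal{F}_t],
\end{equation*}
which is \eqref{eq:DME} with $\kappa(p)=p$ and $E_t=C\alpha(t)^2$.

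The main obstacle is the absorption of the discretization error into $\tilde R$: without the hypothesis $\sum_{n>t}\gamma_n^2=O(\alpha(t))$ and the fact that $\|f(X_n)\|\to 0$ on $\{\lim X_n=x^*\}$, the correction could have variation comparable to or larger than $\alpha(t)$ and would break \eqref{hyp:calpha}. This is the quantitative reason for the extra step-size condition $\sum_{n>t}\gamma_n^2=O(\alpha(t))$ in the discrete-time statement, which has no counterpart in Theorem \ref{thm:th5}.
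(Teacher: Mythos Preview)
Your overall strategy---lift to continuous time, absorb the discretisation error $\gamma_t(f(X_{\lfloor t\rfloor})-f(X_t))$ into $\tilde R$, then invoke Theorem~\ref{thm:th5}---is exactly what the paper does, and your verifications of \eqref{hyp:<M>1} for $M^+$, \eqref{hyp:<M>2}, and \eqref{hyp:calpha} for $\tilde R$ are correct and match the paper's (the latter via the proof of Theorem~\ref{thm:th4d}).

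There is, however, a genuine error in your treatment of \eqref{hyp:jumps}. The displayed chain
\[
  \mathbb{E}\bigl[\|\Delta(P^{-1}M)^+_S\|^21_{S<\infty}\bigm|\mathcal{F}_t\bigr]\le C\sum_{n>t}c_n^2\,\mathbb{P}[S\ge n\mid\mathcal{F}_t]\le C\alpha(t)^2\,\mathbb{P}[S<\infty\mid\mathcal{F}_t]
\]
fails at the second inequality: $\mathbb{P}[S\ge n\mid\mathcal{F}_t]\ge\mathbb{P}[S=\infty\mid\mathcal{F}_t]$, so if $S=\infty$ with positive probability the middle sum is at least $\alpha(t)^2\,\mathbb{P}[S=\infty\mid\mathcal{F}_t]$, which need not be controlled by $\alpha(t)^2\,\mathbb{P}[S<\infty\mid\mathcal{F}_t]$. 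The replacement of $1_{\{S=n\}}$ by $1_{\{S\ge n\}}$ (needed to exploit $\{S\ge n\}\in\mathcal{F}_{n-1}$) throws away exactly the information that would let you sum against $\mathbb{P}[S<\infty\mid\mathcal{F}_t]$.

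This is precisely why the hypothesis demands an $a$th moment with $a>2$ rather than just a second moment. The paper's route is Lemma~\ref{lem:E_t}: apply H\"older directly to each term,
\[
  \mathbb{E}\bigl[\|\Delta M_n\|^21_{\{S=n\}}\bigm|\mathcal{F}_t\bigr]\le \mathbb{E}\bigl[\|\Delta M_n\|^a\bigm|\mathcal{F}_t\bigr]^{2/a}\,\mathbb{P}[S=n\mid\mathcal{F}_t]^{1-2/a},
\]
and then sum using $\mathbb{P}[S=n\mid\mathcal{F}_t]^{1-2/a}\le\mathbb{P}[S<\infty\mid\mathcal{F}_t]^{1-2/a}$. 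This yields \eqref{eq:DME} with $\kappa(p)=p^{1-2/a}$ (which still satisfies $\kappa(p)\ge p$) and $E_t=k^2\alpha(t)^2$, after the reduction to $\Gamma_{n_0,k}$ described at the start of Section~\ref{sec:discrete}. With this correction your argument goes through.
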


\begin{remark}\label{rem:excitationk}
  Our framework easily allows excitations of order $k$: in the previous
  theorems, if one has that $\lim_{n\to\infty} \frac{c_{n+1}}{c_n}=1$,
  then conditions \eqref{hyp:epsn} and \eqref{hyp:epsnd} can be replaced
  by \eqref{hyp:epsnk} and \eqref{hyp:epsndk} with
  \begin{align}
    \label{hyp:epsnk}
    & 0<\liminf_{n\to\infty} \sum_{i=1}^k\mathbb{E}[\|\varepsilon_{n+i}\|^2|\mathcal{F}_{n+i-1}] \quad\hbox{ and }\quad \limsup_{n\to\infty} \mathbb{E}[\|\varepsilon_{n}\|^a|\mathcal{F}_{n-1}]<\infty\\
    \label{hyp:epsndk} & 0<\liminf_{n\to\infty} \sum_{i=1}^k\mathbb{E}[\|\varepsilon^+_{n+i}\|^2|\mathcal{F}_{n+i-1}] \quad\hbox{ and }\quad \limsup_{n\to\infty} \mathbb{E}[\|\varepsilon_{n}\|^a|\mathcal{F}_{n-1}]<\infty.
  \end{align}
  In the proof of these theorems, it suffices to take $\tau(t)=t+k$ and
  \(\mathcal{F}_{t}=\mathcal{F}_{n}\) if \(t\in[n,n+1)\).

  Conditions \eqref{hyp:epsn} and \eqref{hyp:epsnd} can also be replaced
  by \eqref{hyp:epsnkb} and \eqref{hyp:epsndkb} with
  \begin{align}
    \label{hyp:epsnkb}
    & 0<\liminf_{n\to\infty} \mathbb{E}[\sum_{i=1}^k\|\varepsilon_{n+i}\|^2|\mathcal{F}_{n}] \quad\hbox{ and }\quad \limsup_{n\to\infty} \mathbb{E}[\|\varepsilon_{n}\|^a|\mathcal{F}_{n-1}]<\infty\\
    \label{hyp:epsndkb} 
    & 0<\liminf_{n\to\infty} \mathbb{E}[\sum_{i=1}^k\|\varepsilon^+_{n+i}\|^2|\mathcal{F}_{n}] \quad\hbox{ and }\quad \limsup_{n\to\infty} \mathbb{E}[\|\varepsilon_{n}\|^a|\mathcal{F}_{n-1}]<\infty.
  \end{align}
  In the proof of these theorems, it suffices to take $\tau(t)=t+k$ and
  $\mathcal{F}_t=\mathcal{F}_{nk}$ if $t\in [nk,(n+1)k)$.
\end{remark}

\subsection{Non-convergence towards a normally hyperbolic set}
\label{subsec:nonCVhyperbolic}

Let $f:\mathbb{R}^d\to\mathbb{R}^d$ be a $C^{1+\nu}$ vector field, with
$\nu\in (1/2,1]$.  Let $S$ be a compact set invariant for the flow
$\Phi$ generated by $f$. Suppose that $S\subset M$, where $M$ is a
locally invariant submanifold of dimension $d-\delta$, with
$\delta\in\{1,...,d\}$.  Suppose that $\mathbb{R}^d=T_pM\oplus E^u_p$
for all $p\in S$ with: \textit{
  \begin{enumerate}[(i)]
  \item $p\mapsto E_p^u$ is a continuous mapping from $S$ into
    $G(\delta,d)$ the Grassmann manifold of $\delta$-dimensional
    subspaces of $\mathbb{R}^d$;
  \item $D\Phi_t(p)E_p^u=E_{\Phi_t(p)}^u$;
  \item there are constants $\lambda>0$ and $C>0$ such that, for all
    $p\in S$, $w\in E_p^u$ and $t\ge 0$,
    $\|D\Phi_t(p)w\|\ge Ce^{\lambda t}\|w\|$.
  \end{enumerate}}

Let $(X_n)_{n\in \mathbb{N}}$ be a sequence of random variables in
$\mathbb{R}^d$ adapted to a filtration
$(\mathcal{F}_n)_{n\in\mathbb{N}}$ satisfying \eqref{eq:Xn}.  Let
$\Gamma$ be an event.  We suppose that $\sum_n c_n^2<\infty$ and that
$\gamma_n=c_n$, and we set $\alpha_t=\sqrt{\sum_{n>t}c_n^2}$.  We also
suppose that there are a compact neighborhood $\mathcal{N}_0(S)$ of $S$,
$c>0$, $a>2$ and a finite random variable $T_0$ such that, a.s. on
$\Gamma$,
\begin{itemize}
\item $G_n=f(X_n)$ for all $n\ge T_0$,
\item $\sup_n \|\varepsilon_n\|<\infty$ and $\sum_n \|r_n\|^2<\infty$;
\item $\limsup \mathsf{{E}}[\|\varepsilon_n\|^a|\mathcal{F}_{n-1}]<\infty$,
\item
  $\mathbb{E}[\langle \varepsilon_{n+1},v\rangle^2|\mathcal{F}_n]\ge c
  1_{X_n\in\mathcal{N}_0(S)}$, for all $n\ge T_0$ and all unit vector
  $v\in\mathbb{R}^d$.
\end{itemize}

\begin{theorem} \label{thm:thhyp} Suppose the assumptions described
  above are satisfied. Then it holds that
  $\mathbb{P}[\Gamma\cap\{\lim_{n\to\infty}d(X_n,\Gamma)=0\}]$.
\end{theorem}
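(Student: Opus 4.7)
The plan is to deduce Theorem~\ref{thm:thhyp} from Proposition~\ref{thm:th22n} applied to the normal component of $X_n$ relative to $S$, exploiting the exponential expansion along the unstable bundle $E^u$ to produce the drift \eqref{hyp:G22n} and the fourth bulleted hypothesis to produce the non-degeneracy \eqref{hyp:epsn}. After the standard localization on an event where $X_n\in\mathcal{N}_0(S)$ and $G_n=f(X_n)$ for all $n\ge T_0$, the main work is to define a suitable projected process and to verify the hypotheses of the proposition on it.

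Using the theory of normally hyperbolic manifolds one can, after shrinking $\mathcal{N}_0(S)$, extend the bundle $p\mapsto E^u_p$ to $\mathcal{N}_0(S)$ and construct a $C^1$ projection $\pi:\mathcal{N}_0(S)\to M$ along an unstable-like foliation, together with a continuous family of adapted inner products $\|\cdot\|_p$ on $E^u_p$ and a constant $\beta>0$ such that $\langle v,Df(p)v\rangle_p\ge\beta\|v\|_p^2$ for all $p\in S$ and $v\in E^u_p$. Set $Y(x)=x-\pi(x)\in E^u_{\pi(x)}$ and $Y_n=Y(X_n)$, so that on $\{d(X_n,S)\to 0\}$ one has $\|Y_n\|\to 0$. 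Using $f\in C^{1+\nu}$ and plugging \eqref{eq:Xn} into a Taylor expansion of $Y$ yields
\[
Y_{n+1}-Y_n=\gamma_{n+1}\tilde G_n+c_{n+1}(\tilde\varepsilon_{n+1}+\tilde r_{n+1}),
\]
where $\tilde G_n=DY(X_n)f(X_n)=Df(\pi(X_n))Y_n+O(\|Y_n\|^{1+\nu})$, $\tilde\varepsilon_{n+1}=DY(X_n)\varepsilon_{n+1}$, and $\tilde r_{n+1}$ collects $DY(X_n)r_{n+1}$ together with the $O(\|X_{n+1}-X_n\|^2)/c_{n+1}$ second-order remainder.

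The hypotheses of Proposition~\ref{thm:th22n} for $Y_n$ are then checked as follows. The drift \eqref{hyp:G22n} follows from $\langle Y_n,\tilde G_n\rangle_{\pi(X_n)}\ge\beta\|Y_n\|_{\pi(X_n)}^2-C\|Y_n\|^{2+\nu}\ge\tfrac{\beta}{2}\|Y_n\|^2$ once $\|Y_n\|$ is small enough. The noise condition \eqref{hyp:epsn} for $\tilde\varepsilon_{n+1}$ follows from the uniform lower bound $\mathbb{E}[\langle\varepsilon_{n+1},v\rangle^2|\mathcal{F}_n]\ge c$ combined with the uniform surjectivity of $DY(x)$ onto $E^u_{\pi(x)}$, together with the $a$-th moment bound on $\varepsilon_{n+1}$. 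The remainder splits as $\tilde r=\tilde r'+\tilde r''$, with $\tilde r'_n=DY(X_{n-1})r_n$ satisfying $\sum\|\tilde r'_n\|^2<\infty$ by $\sum\|r_n\|^2<\infty$, and $c_n\tilde r''_n=O(\|X_n-X_{n-1}\|^2)=O(\gamma_n^2+c_n^2(\|\varepsilon_n\|^2+\|r_n\|^2))$, which since $\gamma_n=c_n$ and $\varepsilon_n$ is bounded on $\Gamma$ is $O(\gamma_n^{1+\nu})$ for any $\nu\in(0,1)$, hence in particular for some $\nu>\tfrac12$. Proposition~\ref{thm:th22n} then yields $\mathbb{P}[\Gamma\cap\{Y_n\to 0\}]=0$, contradicting $d(X_n,S)\to 0$.

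The main obstacle is the $n$-dependent base point $\pi(X_n)$: both the adapted inner product $\|\cdot\|_{\pi(X_n)}$ and the drift matrix $Df(\pi(X_n))$ vary along the trajectory, whereas Proposition~\ref{thm:th22n} is stated with respect to a fixed inner product. One needs to show that the difference between consecutive adapted inner products along the trajectory contributes only terms absorbable into $\tilde r''$, that the constants $\beta$ and $c$ can be chosen uniformly over $S$ by its compactness, and that a fixed ambient inner product can replace $\|\cdot\|_{\pi(X_n)}$ up to a uniformly bounded bi-Lipschitz factor so that the proposition applies verbatim to $Y_n$.
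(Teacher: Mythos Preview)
Your strategy differs from the paper's and contains a genuine gap at the point you yourself flag as ``the main obstacle''. The paper does \emph{not} work with the vector-valued normal component $Y_n\in E^u_{\pi(X_n)}$; instead it imports from \cite{Benaim99}[Proposition~9.5] a single nonnegative scalar Lipschitz function $\eta$ on a neighborhood of $S$ with the properties $D\eta(x)f(x)\ge\eta(x)$ and $\eta(x')-\eta(x)-D\eta(x)(x'-x)\ge -k\|x'-x\|^{1+\nu}$, and then applies Proposition~\ref{thm:th22n} in its one-dimensional nonnegative form (Remark~\ref{rk:th22n}) to the sequence $\eta(X_n)$. All of the geometry of the moving bundle $E^u_p$ and of the adapted inner products is already absorbed into the construction of $\eta$; nothing in the paper's proof has to track a base point.

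Your proposed fix for the moving inner product does not work as stated. The drift hypothesis \eqref{hyp:G22n} of Proposition~\ref{thm:th22n} is an inequality $\langle G_n,X_n\rangle\ge\beta\|X_n\|^2$ for a \emph{fixed} inner product. Knowing $\langle Y_n,\tilde G_n\rangle_{\pi(X_n)}\ge\beta\|Y_n\|^2_{\pi(X_n)}$ and that all the norms $\|\cdot\|_p$ are uniformly bi-Lipschitz to a fixed one gives no control on the sign of $\langle Y_n,\tilde G_n\rangle$ in the fixed inner product: bi-Lipschitz equivalence of norms does not transfer quadratic-form lower bounds. Your alternative idea of absorbing the ``connection terms'' (the variation of $\langle\cdot,\cdot\rangle_{\pi(X_n)}$ with $n$) into $\tilde r''$ is closer in spirit, but you have not carried it out, and in effect it amounts to differentiating the scalar $\|Y(x)\|_{\pi(x)}^2$---which is precisely the role played by Bena\"im's function $\eta$. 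A secondary issue: your Taylor remainder is $O(\|X_{n+1}-X_n\|^{1+\nu})$, not $O(\|X_{n+1}-X_n\|^2)$, since you only have $C^{1+\nu}$ regularity; and in the vector-valued setting you need a two-sided bound on the remainder (Remark~\ref{rk:th22n} is unavailable), so the $c_n^{1+\nu}\|r_n\|^{1+\nu}$ contribution must also be placed in $\tilde r'$ rather than $\tilde r''$.
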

\begin{proof}
  Let $\eta$ be the non-negative Lipschitz function defined by Benaïm in
  \cite{Benaim99}[Proposition 9.5].  Without loss of generality, we will
  suppose that the constant $\beta$ in \cite{Benaim99}[Proposition 9.5]
  is equal to $1$ (if $\beta\ne 1$, it suffices to replace $\gamma_n$ by
  $\gamma_n/\beta$).  Note that we have slightly different notations to
  the ones in \cite{Benaim99}[Proposition 9.5]: $F$, $\alpha$, $\Gamma$
  and $S$ have respectively to be replaced by $f$, $\nu$, $S$ and $M$.

  We will apply \textcolor{black}{Proposition~\ref{thm:th22n} with
    Remark~\ref{rk:th22n}} to the sequence $\eta(X)$ and to the
  event
  $\Gamma_C:=\Gamma\cap\{\lim_{n\to\infty}d(X_n,S)=0\}\cap \{\sup_n
  \|\varepsilon_n+r_n\| \le C\}$ where $C$ is a positive constant.
  Note that there is a random variable $T\ge T_0$ such that a.s. on
  $\Gamma_C$, for all $n\ge T$ we have $X_n\in \mathcal{N}_0(S)$ and
  $\|r_{n+1}\|\le C$.  A.s. on $\Gamma_C$, we have that, for $n\ge T$,
  \begin{eqnarray*}
    \eta(X_{n+1})-\eta(X_n)&=& D\eta(X_{n})(X_{n+1}-X_n) + \{\eta(X_{n+1})-\eta(X_{n}) - D\eta(X_{n})(X_{n+1}-X_n)\}. 
  \end{eqnarray*}
  Let $G^\eta$, $\varepsilon^\eta$ and $r^\eta$ be the adapted sequences
  defined such that when $X_n\in \mathcal{N}_0(S)$,
  \begin{align*}
    & G^\eta_n=D\eta(X_{n})f(X_n),\quad \varepsilon^\eta_{n+1}=D\eta(X_{n})\varepsilon_{n+1} \\
    \text{ and } & r^\eta_{n+1}=D\eta(X_{n})r_{n+1}+\frac{1}{c_{n+1}}\{\eta(X_{n+1})-\eta(X_{n}) - D\eta(X_{n})(X_{n+1}-X_n)\}
  \end{align*}
  and such that when $X_n\notin \mathcal{N}_0(S)$,
  $G^\eta_n=\frac{\eta(X_{n+1})-\eta(X_n)}{\gamma_{n+1}}$,
  $\varepsilon^\eta_{n+1}=r^\eta_{n+1}=0$. Then $X$ satisfies for all
  $n\ge 0$,
  \[
    \eta(X_{n+1})-\eta(X_n)=\gamma_{n+1}G^\eta_n+c_{n+1}(\varepsilon^\eta_{n+1}+r^\eta_{n+1}).
  \]

  Assertion \textit{(iii)} of \cite{Benaim99}[Proposition 9.5] ensures
  that there is a compact neighborhood
  $\mathcal{N}(S)\subset \mathcal{N}_0(S)$ of $S$ and a finite constant
  $k$ such that, a.s. on $\Gamma_C$, for all $n\ge T$,
  \[
    \eta(X_{n+1})-\eta(X_{n}) - D\eta(X_{n})(X_{n+1}-X_n)\ge -k
    \gamma_{n+1}^{1+\nu}.
  \]
  Let $r'$ and $r''$ be the adapted sequences defined such that when
  $X_n\in \mathcal{N}(S)$, $r'_{n+1}=D\eta(X_{n})r_{n+1}$ and
  $r''_{n+1}=-\frac{k}{c_{n+1}}\gamma_{n+1}^{1+\nu}$ and such that
  when $X_n\notin \mathcal{N}(S)$, $r'_{n+1}=r''_{n+1}=0$.  Then,
  a.s. on $\Gamma_C$, for all $n\ge T$ we have
  $r^\eta_{n+1}\ge r'_{n+1}+r''_{n+1}$.

  Assertion \textit{(vi)} of \cite{Benaim99}[Proposition 9.5] ensures
  that, when $X_n\in\mathcal{N}(S)$,
  $G^\eta_n=D\eta(X_{n})f(X_n)\ge \eta(X_n)$ and so, a.s. on $\Gamma_C$,
  \eqref{hyp:G22n} is satisfied for all $n\ge T$.  One also easily
  checks that a.s. on $\Gamma_C$, \eqref{hyp:r22n} is satisfied,
  i.e. $\sum_n |r'_n|^2<\infty$ and $c_nr''_n=O(\gamma_n^{1+\nu})$.

  In order to apply \textcolor{black}{Proposition~\ref{thm:th22n}}, it remains to check that
  condition \ref{hyp:epsn} is satisfied by $\varepsilon^\eta$ a.s. on
  $\Gamma_C$.  It is straightforward to check that, a.s. on $\Gamma_C$,
  $\limsup \mathbb{E}[|\varepsilon^\eta_{n+1}|^a|\mathcal{F}_{n}]<\infty$.
  The fact that a.s. on $\Gamma_C$,
  $\liminf \mathbb{E}[|\varepsilon_{n+1}^\eta|^2|\mathcal{F}_{n}]>0$,
  follows from the fact that a.s. on $\Gamma_C$, for all $n\ge T$,
  \[
    \mathbb{E}[|\varepsilon_{n+1}^\eta|^2|\mathcal{F}_{n}] =
    \mathbb{E}[|\langle
    D\eta(X_n),\varepsilon_{n+1}^\eta\rangle|^2|\mathcal{F}_n]\ge
    c\|D\eta(X_n)\|^2 1_{X_n\in\mathcal{N}(S)}.
  \]
  Therefore
  $\mathbb{P}[\Gamma\cap\{\lim_{n\to\infty}d(X_n,\Gamma)=0\}]=\lim_{C\to\infty}\mathbb{P}[\Gamma_C\cap
  \{\lim_{n\to\infty}\eta(X_n)=0\}]=0$.
\end{proof}

Theorem \ref{thm:thhyp} is essentially equivalent to
\cite{Tarres00}[Theorem 2] and extends \cite{Benaim99}[Theorem 9.1]:
there is an additional term $r_n$ and
\begin{enumerate}
\item in \cite{Benaim99}[Theorem 9.1], it is assumed that
  $\lim_{n\to\infty}\frac{\gamma_{n+1}^\nu}{\sqrt{\sum_{i=n+1}^m\gamma_i^2}}=0$
  and in Theorem \ref{thm:thhyp} it is just assumed that
  $\sum_n c_n^2<\infty$.
\item the assumption on the noise $\varepsilon$ is weaker in Theorem
  \ref{thm:thhyp} than in \cite{Benaim99}[Theorem 9.1]. Indeed,
  condition \textit{(i)} in \cite{Benaim99}[Theorem 9.1] (i.e. for all
  unit vector $v$,
  $E[\langle \varepsilon_{n+1},v\rangle_+|\mathcal{F}_n]\ge b 1_{X_n\in S}$,
  with $b>0$) implies that for all unit vector $v$,
  $E[\langle \varepsilon_{n+1},v\rangle^2|\mathcal{F}_n]^{1/2}\ge E[|\langle
  \varepsilon_{n+1},v\rangle||\mathcal{F}_n]=2E[\langle
  \varepsilon_{n+1},v\rangle_+|\mathcal{F}_n]\ge 2b 1_{X_n\in S}$.
\end{enumerate}
In \cite{Benaim99}[Theorem 9.1], it is only assumed that $\nu\in (0,1]$,
but, as it is noticed in \cite{Tarres00}, the proof of this theorem
requires $\nu>1/2$.


\section{Proofs of the non-convergence theorems for continuous-time
  processes}
\label{sec:proof_ct}

\subsection{Proofs of Theorem~\ref{thm:th2} and
  \textcolor{black}{Proposition~\ref{thm:th22}}}
\label{sec:secondTHM}
In this section, we let $X$ be a càdlàg process in $\mathbb{R}^d$
satisfying \eqref{eq:Z} and we let \(\Gamma\),
\(\alpha\), \(\tau\), \(\kappa\) and \(E\) be the event, functions and
process introduced just before the statement of
Theorem~\ref{thm:th2}. We suppose that there are a finite random variable $T_0$ and \(\rho>0\)
such that a.s. on $\Gamma$,
\eqref{hyp:F1}~\eqref{hyp:<M>1}~\eqref{hyp:<M>2}~\eqref{hyp:calpha}
and \eqref{hyp:jumps} are satisfied.

Before proving Theorem \ref{thm:th2}, i.e. under the assumptions given
above it holds that
$\mathbb{P}[\Gamma\cap\{\lim_{n\to\infty}X_n=0\}]=0$, we first give a
sufficient condition ensuring that \eqref{hyp:jumps} holds.
\begin{lemma} \label{lem:E_t}
  If there is $a>2$ and $t_0>0$ such that a.s. on \(\Gamma\), for all
  $s\ge t\ge t_0$, one has that
  \begin{equation}
    \mathbb{E}[\|\Delta M_s\|^a|\mathcal{F}_t]^{2/a} \le -k^2\Delta\alpha^2_s,
  \end{equation}
  then \eqref{hyp:jumps} holds a.s. on \(\Gamma\),
  with \(\kappa(p)=p^{b}\) with $b=1-\frac{2}{a}$ and with \(E\) the
  càdlàg process defined by \(E_{t}=k^{2}\alpha^{2}_{t}\) if
  \(t\geq{}t_{0}\) and \(E_{t}=\infty\) if \(t<t_{0}\). 
\end{lemma}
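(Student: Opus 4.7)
The plan is to verify \eqref{eq:DME} directly for the proposed choice $\kappa(p)=p^{b}$ with $b=1-2/a$ and $E_t=k^2\alpha_t^2$ (for $t\ge t_0$; else $\infty$). The case $t<t_0$ is immediate since $E_t=\infty$, so the work concentrates on $t\ge t_0$ and a stopping time $S\ge t$, where I need to show
\[
\mathbb{E}[\|\Delta M_S\|^2 1_{S<\infty}|\mathcal{F}_t]\;\le\; k^2\alpha_t^{2}\,\mathbb{P}[S<\infty|\mathcal{F}_t]^{1-2/a}.
\]

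First I would split the left-hand side via conditional Hölder with conjugate exponents $a/2$ and $a/(a-2)$:
\[
\mathbb{E}[\|\Delta M_S\|^2 1_{S<\infty}|\mathcal{F}_t]\;\le\; \mathbb{E}[\|\Delta M_S\|^a 1_{S<\infty}|\mathcal{F}_t]^{2/a}\,\mathbb{P}[S<\infty|\mathcal{F}_t]^{1-2/a}.
\]
This is what forces the exponent $b=1-2/a$ in $\kappa$ and isolates the $a$-th moment of the jump, which is exactly what the hypothesis controls.

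Next, I would apply the hypothesis (interpreted with $s=S$ after a standard stopping-time approximation, see below) to obtain
\[
\mathbb{E}[\|\Delta M_S\|^a 1_{S<\infty}|\mathcal{F}_t]^{2/a}\;\le\; -k^{2}\Delta\alpha^{2}_{S}\,1_{S<\infty},
\]
and then use that $\alpha$ is non-increasing with $S\ge t$:
\[
-\Delta\alpha_S^{2}=\alpha_{S-}^{2}-\alpha_S^{2}\le\alpha_{S-}^{2}\le\alpha_t^{2}.
\]
Raising to the $a/2$ power yields $\mathbb{E}[\|\Delta M_S\|^{a}1_{S<\infty}|\mathcal{F}_t]\le k^{a}\alpha_t^{a}$, and substituting back into the Hölder bound gives exactly \eqref{eq:DME} with the announced $E_t$ and $\kappa$.

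The main obstacle is the passage from the hypothesis, which is stated pointwise in $s$, to its use at the random time $S$. For deterministic $s\not\in J:=\{u:\Delta\alpha_u\ne 0\}$ the hypothesis forces $\Delta M_s=0$ a.s., so the jumps of $M$ (after $t_0$) are essentially carried by the countable set $J$; one can then write $\|\Delta M_S\|^{a}1_{S<\infty}=\sum_{s\in J,\,s\ge t}\|\Delta M_s\|^{a}1_{S=s}$ and condition on $\mathcal{F}_t$ term-by-term, using $1_{S=s}\le 1$ and the pointwise hypothesis together with $-\Delta\alpha_s^{2}\le\alpha_t^{2}$. A maximum over $s\in J\cap[t,\infty)$ of $(-k^{2}\Delta\alpha_s^{2})^{a/2}$ is bounded by $k^{a}\alpha_t^{a}$, which closes the argument without any assumption that the hypothesis transfers a priori to random times.
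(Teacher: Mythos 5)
Your proof is correct in substance and uses the same two ingredients as the paper --- conditional H\"older with exponents $a/2$ and $a/(a-2)$, and the telescoping bound $-\sum_{s>t}\Delta\alpha^2_s\le\alpha^2_t$ --- but in the opposite order. The paper first decomposes $\mathbb{E}[\|\Delta M_S\|^21_{S<\infty}|\mathcal{F}_t]\le\sum_{s>t}\mathbb{E}[\|\Delta M_s\|^21_{S=s}|\mathcal{F}_t]$, applies H\"older term by term (producing $\mathbb{P}[S=s|\mathcal{F}_t]^b\le\mathbb{P}[S<\infty|\mathcal{F}_t]^b$), and then only needs the \emph{linear} telescoping sum $\sum_{s>t}(-\Delta\alpha^2_s)\le\alpha^2_t$. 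You apply H\"older once globally and are then left to show $\mathbb{E}[\|\Delta M_S\|^a1_{S<\infty}|\mathcal{F}_t]\le k^a\alpha_t^a$, i.e.\ to control $\sum_{s\in J,\,s\ge t}(-\Delta\alpha^2_s)^{a/2}$ rather than the linear sum. That bound is true, but your justification (``a maximum over $s$ \dots is bounded by $k^a\alpha_t^a$'') as written only controls the largest term of a countable sum; the correct closing step is the interpolation
\[
\sum_{s\in J,\,s\ge t}(-\Delta\alpha^2_s)^{a/2}\le\Bigl(\sup_{s\ge t}(-\Delta\alpha^2_s)\Bigr)^{\frac{a}{2}-1}\sum_{s\ge t}(-\Delta\alpha^2_s)\le(\alpha^2_t)^{\frac{a}{2}-1}\,\alpha^2_t=\alpha^a_t,
\]
which uses both the pointwise bound and the telescoping sum. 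With that line inserted your argument closes; it is a cosmetic repair, not a conceptual gap, though it does make your ordering slightly more work than the paper's.

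On the reduction to the countable set $J$ of jump times of $\alpha^2$: be aware that ``$\Delta M_s=0$ a.s.\ for every fixed $s\notin J$'' does not by itself imply that the jumps of $M$ are carried by $J$ (a Poisson-type martingale has $\Delta M_s=0$ a.s.\ at every fixed time yet jumps at random times), so your phrase ``essentially carried by $J$'' is doing real work. The paper's own proof makes exactly the same tacit assumption when it writes the sum $\sum_{s>t}\mathbb{E}[\|\Delta M_s\|^21_{S=s}|\mathcal{F}_t]$ without comment; in the intended application $M$ jumps only at integer times, which are contained in $J$. You are at least explicit about the issue, so this is a shared caveat rather than a defect of your proposal relative to the paper's proof.
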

\begin{proof}
  Let $t\ge t_0$ and $S$ be a stopping time larger than $t$.  Then
  (setting $b=1-\frac{2}{a}$)
  \begin{align*}
    \mathbb{E}[\|\Delta M_S\|^21_{S<\infty}|\mathcal{F}_t]
    &\le \; \sum_{s>t} \mathbb{E}[\|\Delta M_s\|^21_{S=s}|\mathcal{F}_t]\\
    &\le \; \sum_{s>t} \mathbb{E}[\|\Delta M_s\|^a|\mathcal{F}_t]^{2/a} \mathbb{P}[S=s|\mathcal{F}_t]^b \\
    &\le \; -\sum_{s>t} k^2(\Delta\alpha^2_s) \mathbb{P}[S<\infty|\mathcal{F}_t]^b \\
    &\le \; k^2\alpha^2_t \mathbb{P}[S<\infty|\mathcal{F}_t]^b,
  \end{align*}
  Where the two last inequalities hold a.s. on \(\Gamma\).  We thus
  have that, a.s. on \(\Gamma\), for all stopping time \(S\) larger
  than \(t\geq{0}\),
  \(\frac{\mathbb{E}[\|\Delta
    M_S\|^21_{S<\infty}|\mathcal{F}_t]}{\kappa(\mathbb{P}[S<\infty|\mathcal{F}_t])}\leq{E_{t}}\)
  and \(E_{t}=O(\alpha^{2}_{t})\) as \(t\to\infty\). 
\end{proof}

\subsubsection{Simplification of the hypotheses.}\label{sec:Simplification}
We start by simplifying the hypotheses. Note that a.s.,
$\Gamma=\cap_{\varepsilon\in (0,1]}\cup_{(t_0,k)\in\mathbb{N}_*^2}
\Gamma_{t_0,k,\varepsilon}$, where for $(t_0,k)\in \mathbb{N}_*^2$ and
$\varepsilon\in (0,1]$, $\Gamma_{t_0,k,\varepsilon}$ is the set of all
$\omega\in \Gamma$ for which $T_0\le t_0$ and such that for all
$t\ge t_0$,
\begin{align}
  \label{hyp:F}
  & \langle X_{t-},F_t\rangle 1_{\{\|X_{t-}\|\le \rho\}} \ge 0,\\
  & \langle M\rangle_{t,\tau(t)}\geq k^{-2}(\alpha^2_{t}-\alpha^2_{\tau(t)}),\\
  & \langle M\rangle_{t,\infty}\leq k^2\alpha^2_t,\\
  \label{hyp:kt0calpha}
  & V(R,(t,\infty))\leq \varepsilon\alpha_t, \\
  \label{hyp:kt_ojumps0}
  & E_{t}\leq\frac{k^2}{4}\alpha^2_t. 
\end{align} 
It is thus sufficient to prove Theorem \ref{thm:th2} with $\Gamma$
replaced with $\Gamma_{t_0,k,\varepsilon}$ for all $(t_0,k,\varepsilon)$
with $t_0$ sufficiently large and $\varepsilon$ sufficiently small.  Note
that on $\Gamma_{t_0,k,\varepsilon}$, for $t\ge t_0$, we have (using
that
$\langle M\rangle_{t,\infty}=\sum_{n\ge 1} \langle
M\rangle_{\tau^{n-1}(t),\tau^n(t)}\ge k^{-2}\sum_{n\ge 1}
\big(\alpha^2_{\tau^{n-1}(t)}-\alpha^2_{\tau^{n}(t)}\big)$)
\begin{equation}
  \label{hyp:kt0<M>}
  k^{-2}\alpha^2_t\le\langle M\rangle_{t,\infty}\leq k^2\alpha^2_t.
\end{equation}
Note also that \eqref{hyp:kt0calpha} and \eqref{hyp:kt_ojumps0} imply that,
for $\varepsilon<k/2$ and $t\ge t_0$,
\begin{equation}
  \label{hyp:kt_ojumps}
  \mathbb{E}[\|\Delta X_S\|^21_{S<\infty}|\mathcal{F}_t] \le k^2\alpha^2_t\times\kappa(\mathbb{P}[S<\infty|\mathcal{F}_t]), \hbox{ for any stopping times $S$ larger than $t$}.
\end{equation}
For the rest of this section, we will suppose that
$\Gamma=\Gamma_{t_0,k,\varepsilon}$, with $(t_0,k)\in \mathbb{N}_*^2$
and $\varepsilon<k/2$. We will also set \(V_{t}=V(R,(0,t])\) (then
\(V(R,(s,t])=V_{t}-V_{s}=V_{s,t}\)).

Let $T$ be the first time $t\ge \tau(t_0)>t_0$ such that one the
following items hold
\begin{enumerate}[{\itshape (i)}]
\item $\langle X_{t-},F_t\rangle 1_{\{\|X_{t-}\|<\rho\}} < 0 $,
\item
  $\langle M\rangle_{t}-\langle M\rangle_{\tau^{-1}(t)}<
  k^{-2}(\alpha^2_{\tau^{-1}(t)}-\alpha^2_t)$,
\item
  $\sup_{s\in [t_0,t]}\frac{\langle M\rangle_t-\langle
    M\rangle_s}{\alpha^2_s} > k^2$,
\item $\sup_{s\in [t_0,t]}\frac{V_t-V_s}{\alpha_s} > \varepsilon$,
\item \(E_{t}>\frac{k^2}{4}\alpha^2_t\). 
\end{enumerate}
Then $T$ is a stopping time and $T=\infty$ a.s. on $\Gamma$.  Possibly
extending the probability space, define new processes $X'$, $F'$, $M'$
and $R'$ such that if $t< T$, $X'_t=X_t$, $F'_t=F_t$, $M'_t=M_t$ and
$R'_t=R_t$, and such that for $t\ge T$, $F'_t=R'_t=0$ and
$M'_t-M_T=\tilde{M}_t-\tilde{M}_T$, where $\tilde{M}$ is a
martingale\footnote{One can take $\tilde{M}_t=(B_{A_t},0,\dots,0)$,
  where $B$ is an independent standard Brownian motion and
  $A_t=\alpha^2_0-\alpha^2_t$. } in $\mathbb{R}^d$ such that
$\langle \tilde{M}\rangle_t=\alpha^2_0-\alpha^2_t$.  Then, it can be
checked that $X'$, $F'$, $M'$ and $R'$ satisfy a.s. for all
$t\ge \tau(t_0)$ conditions \eqref{hyp:F}, \eqref{hyp:kt0<M>},
\eqref{hyp:kt0calpha} and \eqref{hyp:kt_ojumps} and that $X'=X$ on
$\Gamma$.  This implies that
$\mathbb{P}[\Gamma\cap\{\lim_{t\to\infty}
X_t=0\}]=\mathbb{P}[\Gamma\cap\{\lim_{t\to\infty} X'_t=0\}]\le
\mathbb{P}[\lim_{t\to\infty} X'_t=0]$.  Therefore to prove Theorem
\ref{thm:th2} it suffices to prove that
$\mathbb{P}[\lim_{t\to\infty} X'_t=0]=0$.

\subsubsection{A non-convergence proposition.}
We suppose in this subsection that there are $t_0\ge 0$, $k\ge 1$,
$\rho>0$, $\varepsilon>0$ and an increasing function
\(\kappa:[0,1]\to[0,1]\) satisfying \(\kappa(p)\geq{p}\) for all
\(p\in[0,1]\), 
such that almost surely, for all $t>t_0$, conditions \eqref{hyp:F},
\eqref{hyp:kt0<M>}, \eqref{hyp:kt0calpha} and \eqref{hyp:kt_ojumps} are
satisfied, i.e.
\begin{itemize}
\item $\langle X_{t-},F_t\rangle 1_{\{\|X_{t-}\|\le \rho\}} \ge 0$,
\item
  $k^{-2}\alpha^2_t\le\langle M\rangle_{t,\infty} \leq
  k^2\alpha^2_t$,
\item $V_\infty - V_t\le \varepsilon\alpha_t$,
\item
  $\mathbb{E}[\|\Delta X_S\|^21_{S<\infty}|\mathcal{F}_t] \le
  k^2\alpha^2_t\times\kappa(\mathbb{P}[S<\infty|\mathcal{F}_t])$, for any
  stopping times $S$ larger than $t$. 
\end{itemize}
Note that condition \eqref{hyp:kt0<M>} ensures that for each $i$, $M^i$
is a square integrable martingale and that $[M]-\langle M\rangle$ is a
martingale.

In this subsection, we will prove the following proposition, which
will allow us to conclude the proof of Theorem \ref{thm:th2}.
\begin{proposition}\label{prop:nonCV}
  If { $\varepsilon\le \frac{1}{2^7k^3\sqrt{2d}}$ and
    $\alpha_{t_0}\le \frac{\rho}{2^3 k\sqrt{2d}}$}, then
  $\mathbb{P}[\lim_{t\to\infty} X_t=0]=0$.
\end{proposition}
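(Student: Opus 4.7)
The approach is to apply It\^o's formula to $\|X_t\|^2$ on the stopped process $X_{t\wedge S}$, where $S$ is the first exit time from a ball of carefully chosen radius $a$. The key quantitative balance to exploit is that, over the entire future $(t_0,\infty)$, the martingale quadratic variation contributes at least $k^{-2}\alpha_{t_0}^2$ to the expected growth of $\|X_t\|^2$ (by the lower half of~\eqref{hyp:kt0<M>}), while the finite-variation remainder $R$ contributes at most of order $\varepsilon\alpha_{t_0}$ (by~\eqref{hyp:kt0calpha}); taking $\alpha_{t_0}$ and $\varepsilon$ sufficiently small makes the former dominate, contradicting $X_t\to 0$.

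Concretely, set $a = 2k\sqrt{2d}\,\alpha_{t_0}$, so that $a \le \rho/4$ by the hypothesis on $\alpha_{t_0}$, and let $S = \inf\{t\ge t_0:\|X_t\|\ge a\}$. It\^o's formula yields
\[
\|X_{t\wedge S}\|^2 = \|X_{t_0}\|^2 + 2\!\int_{t_0}^{t\wedge S}\!\langle X_{s-},F_s\rangle\,ds + 2\!\int_{t_0}^{t\wedge S}\!\langle X_{s-},dM_s\rangle + 2\!\int_{t_0}^{t\wedge S}\!\langle X_{s-},dR_s\rangle + [X]_{t_0,t\wedge S}.
\]
Take conditional expectation given $\mathcal{F}_{t_0}$. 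The stochastic integral against $M$ vanishes after a suitable localisation (the integrand is bounded by $a$ up to a jump at $S$ controlled via~\eqref{hyp:kt_ojumps}). The drift integral is non-negative by~\eqref{hyp:F} since $\|X_{s-}\|<a\le\rho$ for $s<S$. The $R$-integral is bounded in absolute value by $2a\,V(R,(t_0,\infty)) \le 2a\varepsilon\alpha_{t_0}$. The quadratic variation term converges as $t\to\infty$ to $[X]_{t_0,S}$, whose conditional expectation equals $\mathbb{E}[\langle M\rangle_{t_0,S}\mid\mathcal{F}_{t_0}]$ modulo negligible $R$-jump contributions, and on $\{S=\infty\}$ this is $\langle M\rangle_{t_0,\infty} \ge k^{-2}\alpha_{t_0}^2$.

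On the other hand, $\|X_{t\wedge S}\|^2$ is bounded above by $a^2$ on $\{S=\infty\}$ and by $2(a^2+\|\Delta X_S\|^2)$ on $\{S<\infty\}$, and the jump hypothesis~\eqref{hyp:kt_ojumps} yields $\mathbb{E}[\|\Delta X_S\|^2 \mathbf{1}_{S<\infty}\mid\mathcal{F}_{t_0}] \le k^2\alpha_{t_0}^2\,\kappa(\mathbb{P}[S<\infty\mid\mathcal{F}_{t_0}])$. Assembling both sides gives a schematic inequality
\[
2a^2 + k^2\alpha_{t_0}^2\,\kappa(\mathbb{P}[S<\infty\mid\mathcal{F}_{t_0}]) + O(\varepsilon\alpha_{t_0}^2) \;\ge\; \|X_{t_0}\|^2 + k^{-2}\alpha_{t_0}^2\,\mathbb{P}[S=\infty\mid\mathcal{F}_{t_0}].
\]
The numerical values $\varepsilon\le 1/(2^7k^3\sqrt{2d})$ and $\alpha_{t_0}\le\rho/(2^3k\sqrt{2d})$ are calibrated so that, whenever $\|X_{t_0}\|$ is small compared to $a$ (say $<a/2$), this forces $\mathbb{P}[S<\infty\mid\mathcal{F}_{t_0}] \ge p_0$ for some universal constant $p_0>0$.

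To conclude, observe that on $\{\lim_t X_t = 0\}$ one has $\|X_{t_0}\| \to 0$, so eventually $\|X_{t_0}\|<a(t_0)/2$, and the exit-probability bound applies from then on. Applying it along a sequence of (random) times $t_n\to\infty$ selected adaptively through the convergent martingale $\mathbb{P}(\lim X = 0\mid\mathcal{F}_t)\to\mathbf{1}_{\lim X=0}$, and invoking a conditional Borel--Cantelli argument, should produce infinitely many exits from the shrinking balls $B(0,a(t_n))$, ruling out convergence to $0$ on any event of positive probability. The main obstacles are the careful bookkeeping of all constants in the master inequality to extract the explicit $p_0>0$ from the particular thresholds $2^7k^3\sqrt{2d}$ and $2^3k\sqrt{2d}$, and the delicate final step of turning the uniform lower bound on exit probability into the statement $\mathbb{P}(\lim X_t = 0) = 0$ despite the scale $a(t_n)$ itself shrinking to zero.
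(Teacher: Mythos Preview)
Your proposal captures the spirit of the first step (an It\^o/exit-time computation yielding a lower bound on the exit probability), but it has two genuine gaps, and the paper's proof supplies a specific additional idea for each.

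First, your exit-probability inequality is vacuous with the radius you chose. With $a=2k\sqrt{2d}\,\alpha_{t_0}$ one has $2a^2=16dk^2\alpha_{t_0}^2$, which already dominates the quadratic-variation lower bound $k^{-2}\alpha_{t_0}^2$ on the right, so no constraint on $\mathbb{P}[S<\infty\mid\mathcal{F}_{t_0}]$ follows. The difficulty is that on $\{S=\infty\}$ the process merely stays below the fixed level $a$, contributing up to $a^2$ to the left-hand side forever. The paper avoids this by taking a \emph{time-varying} exit level: it sets $S_t=\inf\{s\ge t:\|X_s\|\ge L\alpha_s\}$, so that on $\{S_t>s\}$ one has $\|X_s\|^2<L^2\alpha_s^2\to 0$, and the non-exit contribution to $\liminf_s\mathbb{E}[\|X_{s\wedge S_t}\|^2\mid\mathcal{F}_t]$ vanishes. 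That is what makes the balance informative and yields $\kappa(\mathbb{P}[S_t<\infty\mid\mathcal{F}_t])\ge (2+8L^2k^2+8k^4)^{-1}$ (Lemma~\ref{lem:lem1}).

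Second, and more seriously, even a uniform lower bound on the probability of exiting shrinking balls does \emph{not} preclude $X_t\to 0$: a process can exit $B(0,a(t_n))$ for every $n$ and still tend to $0$. You flag this as an obstacle but give no mechanism to overcome it, and a conditional Borel--Cantelli argument alone cannot. The paper supplies a second lemma (Lemma~\ref{lem:lem2}): once $X$ has reached the level $L\alpha_{S_t}$, with conditional probability at least $1/2$ it never drops below $L\alpha_{S_t}/4$ before (if ever) reaching the fixed level $\rho$. The proof applies It\^o's formula to $\|X_s\|$ (not $\|X_s\|^2$), uses the outward drift $\langle X_{s-},F_s\rangle\ge 0$, and controls the martingale infimum via Doob's inequality together with $\langle M\rangle_{S_t,\infty}\le k^2\alpha_{S_t}^2$. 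Calling this event $H_t$, one obtains $\mathbb{P}[H_t\mid\mathcal{F}_t]\ge p/2$ uniformly in $t\ge t_0$, and on $H_t\cap\{U_t=\infty\}$ one has $\limsup\|X_s\|>0$ directly. The final step is then a short martingale-convergence argument comparing $1_{\{\limsup\|X_t\|>0\}}$ with $p/2-1_{\{U_s<\infty\text{ for all }s\}}$; no Borel--Cantelli enters.
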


To prove this proposition, we follow (and adapt to our framework)
\cite{Tarres00}.  This proof is a consequence of two lemmas.

For $t\ge t_0$, let $S_t=\inf\{s\ge t:\, \|X_s\|\ge L\alpha_s\}$, where
$L$ is a positive constant we will fix later on.

\begin{lemma}\label{lem:lem1} 
  If $(2L+\varepsilon)\varepsilon\le (2k)^{-2}$ and
  $\rho\ge L\alpha_{t_0}$, then there is $p>0$ such that for all
  $t\ge t_0$, $\mathbb{P}[S_t<\infty|\mathcal{F}_t]\ge p$.
\end{lemma}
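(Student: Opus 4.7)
The plan is to follow the approach of~\cite{Tarres00}: apply It\^o's formula to $\|X\|^2$ stopped at $S_t$, and then compare two estimates for $\mathbb{E}[U_\infty\mid\mathcal{F}_t]$ with $U_s:=\|X_{s\wedge S_t}\|^2$. The lower estimate comes from the definite positive contribution of the quadratic variation of $M$ guaranteed by~\eqref{hyp:kt0<M>}, while the upper estimate exploits the fact that on the event $\{S_t=\infty\}$ the trajectory $X$ must shrink to $0$. The resulting mismatch forces $q:=\mathbb{P}[S_t<\infty\mid\mathcal{F}_t]$ to be bounded below uniformly in $t$.

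Concretely, It\^o's formula applied coordinatewise and summed gives
\begin{equation*}
U_\infty=\|X_t\|^2+2\!\int_t^{S_t}\!\langle X_{u-},F_u\rangle\,du+N_\infty+2\!\int_t^{S_t}\!\langle X_{u-},dR_u\rangle+[X]_{t,S_t},
\end{equation*}
where $N_s:=2\int_t^{s\wedge S_t}\langle X_{u-},dM_u\rangle$ is a square-integrable true martingale, since $\|X_{u-}\|\le L\alpha_u\le L\alpha_t$ on $[t,S_t]$ and therefore $\langle N\rangle_\infty\le 4L^2k^2\alpha_t^4$. The drift integral is non-negative by~\eqref{hyp:F} (applicable because $L\alpha_{t_0}\le\rho$ keeps $\|X_{u-}\|\le\rho$), and the $R$-integral is bounded in absolute value by $2L\alpha_t\cdot V(R,(t,\infty))\le 2L\varepsilon\alpha_t^2$. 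For the bracket, decomposing $[X]_{t,S_t}$ into contributions from $M$, $R$ and their cross-variations and applying Kunita--Watanabe together with AM--GM and $[R]_{t,S_t}\le(V_\infty-V_t)^2\le\varepsilon^2\alpha_t^2$ yields the lower bound $[X]_{t,S_t}\ge\frac12[M]_{t,S_t}-2\varepsilon^2\alpha_t^2$.

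Taking conditional expectations and using that the modifications of Section~\ref{sec:Simplification} make $[M]-\langle M\rangle$ a uniformly integrable martingale, optional stopping gives $\mathbb{E}[[M]_{t,S_t}\mid\mathcal{F}_t]=\mathbb{E}[\langle M\rangle_{t,S_t}\mid\mathcal{F}_t]\ge k^{-2}\alpha_t^2(1-q)$, because $\langle M\rangle_{t,S_t}\ge\langle M\rangle_{t,\infty}\mathbf{1}_{\{S_t=\infty\}}\ge k^{-2}\alpha_t^2\mathbf{1}_{\{S_t=\infty\}}$. For the matching upper bound, on $\{S_t=\infty\}$ one has $U_\infty=0$ since $\|X_s\|\le L\alpha_s\to 0$, while on $\{S_t<\infty\}$ one has $\|X_{S_t}\|\le\|X_{S_t-}\|+\|\Delta X_{S_t}\|\le L\alpha_t+\|\Delta X_{S_t}\|$ (using $\alpha_{S_t-}\le\alpha_t$); squaring and applying~\eqref{hyp:kt_ojumps} to the stopping time $S_t$ gives $\mathbb{E}[U_\infty\mid\mathcal{F}_t]\le 2L^2\alpha_t^2 q+2k^2\alpha_t^2\kappa(q)$.

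Assembling all the pieces and dividing by $\alpha_t^2$ yields the deterministic inequality
\begin{equation*}
\frac{1-q}{2k^2}-2\varepsilon(L+\varepsilon)\le 2L^2 q+2k^2\kappa(q).
\end{equation*}
The hypothesis $(2L+\varepsilon)\varepsilon\le(2k)^{-2}$ then keeps the left-hand side bounded below by a strictly positive constant when $q$ is small, while the right-hand side tends to $0$ as $q\to 0$ (using that $\kappa$ is increasing with $\kappa(0)=0$, which is the case $\kappa(p)=p^b$, $b\in(0,1]$, of practical interest). This forces $q\ge p$ for some $p>0$ independent of $t$, as required. The main technical subtlety is the treatment of the possible jump of $X$ at $S_t$ in a way consistent both with the true-martingale property of $N$ under optional stopping and with the invocation of~\eqref{hyp:kt_ojumps} at $S=S_t$; this is precisely what the uniform a.s. bounds on $\langle M\rangle_{t,\infty}$, $V(R,(t,\infty))$ and $E_t$ inherited from Section~\ref{sec:Simplification} are designed to make rigorous.
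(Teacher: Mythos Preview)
Your proof is correct and follows essentially the same route as the paper: It\^o's formula for $\|X_{s\wedge S_t}\|^2$, a lower bound coming from $\tfrac12\langle M\rangle_{t,\infty}$ on $\{S_t=\infty\}$ minus the $R$-error $(2L+\varepsilon)\varepsilon\alpha_t^2$, and an upper bound coming from $\|X_{S_t}\|\le L\alpha_t+\|\Delta X_{S_t}\|$ combined with~\eqref{hyp:kt_ojumps}. Two cosmetic differences are worth noting. First, the paper packages $2\int\langle X_{u-},dM_u\rangle+\tfrac12([M]-\langle M\rangle)$ into a single martingale $\bar{M}^t$ and then takes $\liminf_{s\to\infty}$ of conditional expectations, whereas you pass directly to $s=\infty$; both are justified by the uniform bound $\langle M\rangle_{t,\infty}\le k^2\alpha_t^2$. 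Second, in the last step the paper uses the hypothesis $\kappa(p)\ge p$ to absorb the linear term $2L^2q$ into $\kappa(q)$ and obtains the explicit bound $\kappa(q)\ge \frac{1}{2+8L^2k^2+8k^4}$, then appeals only to the monotonicity of $\kappa$; you instead argue that the right-hand side tends to $0$ as $q\to0$, which needs $\kappa(0^+)=0$ (as in the case $\kappa(p)=p^b$). Your bracket estimate $[X]_{t,S_t}\ge\tfrac12[M]_{t,S_t}-2\varepsilon^2\alpha_t^2$ loses an inessential factor of~$2$ compared to the paper's $-\varepsilon^2\alpha_t^2$, but the hypothesis $(2L+\varepsilon)\varepsilon\le(2k)^{-2}$ still makes your final inequality strict.
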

\begin{proof}
  Let us fix $t\ge t_0$.  For $s\in (t,S_t)$, we have $\|X_{s-}\|<\rho$ and
  for $s>t$,
  \begin{align}
    \label{eq:normX2}\|X_{s\wedge S_t}\|^2
    =&\; \|X_t\|^2 +2\int_t^{s\wedge S_t} \langle X_{u-},dX_u\rangle + [X]_{s\wedge S_t}-[X]_t.
  \end{align}
  One has $[X]=[M+R]$,
  \begin{align}
    \label{eq:[X]}[X]_{s\wedge S_t}-[X]_t & \;\ge \frac{[M]_{s\wedge S_t}-[M]_t}{2}-([R]_{s\wedge S_t}-[R]_t)
  \end{align}
  and
  \begin{align}
    \nonumber\int_t^{s\wedge S_t} \langle X_{u-},dX_u\rangle &\;  = \int_t^{s\wedge S_t} \langle X_{u-},F_u\rangle du + \int_t^{s\wedge S_t} \langle X_{u-},dM_u\rangle
                                                               + \int_t^{s\wedge S_t} \langle X_{u-},dR_u\rangle\\
    \label{eq:intXX}&\;\ge \int_t^{s\wedge S_t} \langle X_{u-},dM_u\rangle
                      + \int_t^{s\wedge S_t} \langle X_{u-},dR_u\rangle.
  \end{align}  
  For all $t$, let $\bar{M}^t$ be the martingale stopped at $S_t$
  defined by
  \begin{align}
    \label{eq:defMbar}\bar{M}^t_s =&\; 2\int_t^{s\wedge S_t} \langle X_{u-},dM_u\rangle 
                                     +\frac{1}{2}\int_t^{s\wedge S_t} d([M]_u-\langle M\rangle_u).
  \end{align}
  Using \eqref{eq:[X]}, \eqref{eq:intXX} and \eqref{eq:defMbar},
  \eqref{eq:normX2} implies that
  \begin{align*}
    \|X_{s\wedge S_t}\|^2
    \ge &\;  \bar{M}^t_s
          + 2\int_t^{s\wedge S_t} \langle X_{u-},dR_u\rangle
          + \frac{1}{2}\big(\langle M\rangle_{s\wedge S_t}-\langle M\rangle_t\big)-\big([R]_{s\wedge S_t}-[R]_t\big).
  \end{align*}

  We have that
  \begin{align*}
    \left|\int_t^{s\wedge S_t} \langle X_{u-},dR_u\rangle \right|
    \le&\; \int_t^{s\wedge S_t} \|X_{u-}\|dV_u \le  L\int_t^\infty \alpha_u dV_u\\
    \le &\; -L\int_t^\infty (V_u-V_t)d\alpha_u
          \le L\varepsilon\alpha^2_t
  \end{align*}
  and
  \begin{align*}
    &[R]_{s\wedge S_t}-[R]_t \le [R]_\infty-[R]_t \le \sup_{s\ge t} \|\Delta R_s\| \times \big( V_\infty - V_t\big) \le \varepsilon^2 \alpha^2_t.
  \end{align*}
  Therefore,
  \begin{align*}
    \|X_{s\wedge S_t}\|^2
    \ge &\;  \bar{M}^t_s
          -(2L+\varepsilon)\varepsilon\alpha^2_t
          + \frac{1}{2}\big(\langle M\rangle_{s\wedge S_t}-\langle M\rangle_t\big).
  \end{align*}

  Using that $(2L+\varepsilon)\varepsilon\le \frac{1}{4k^2}$, the
  martingale property yields
  \begin{align*}
    \liminf_{s\to\infty}\mathbb{E}[\|X_{s\wedge S_t}\|^2|\mathcal{F}_t]
    &\ge 
      -\frac{\alpha^2_t}{4k^2} + \frac{1}{2}\liminf_{s\to\infty} \mathbb{E}[1_{\{S_t> s\}}(\langle M\rangle_s-\langle M\rangle_t)|\mathcal{F}_t]\\
    &\ge 
      -\frac{\alpha^2_t}{4k^2} + \frac{1}{2}\mathbb{E}[1_{\{S_t=\infty\}}\langle M\rangle_{t,\infty}|\mathcal{F}_t]\\
    &\ge 
      -\frac{\alpha^2_t}{4k^2} + \frac{\alpha^2_t}{2k^2}
      \mathbb{P}[S_t=\infty|\mathcal{F}_t]\\
    &\ge 
      \frac{\alpha^2_t}{4k^2} - \frac{\alpha^2_t}{2k^2} \mathbb{P}[S_t<\infty|\mathcal{F}_t]
  \end{align*}
  We also have
  \begin{align*}
    \mathbb{E}[\|X_{s\wedge S_t}\|^2|\mathcal{F}_t]
    &=\mathbb{E}[\|X_{s\wedge S_t}\|^21_{\{S_t\le s\}}|\mathcal{F}_t]
      + \mathbb{E}[\|X_{s\wedge S_t}\|^21_{\{S_t> s\}}|\mathcal{F}_t]\\
    &\le \mathbb{E}[(L\alpha_t+\|\Delta X_{S_t}\|)^21_{\{S_t\le s\}}|\mathcal{F}_t] + L^2\alpha^2_s \mathbb{P}[S_t>s|\mathcal{F}_t] 
  \end{align*}
  and therefore
  \begin{align*}
    \liminf_{s\to\infty}\mathbb{E}[\|X_{s\wedge S_t}\|^2|\mathcal{F}_t] 
    &\le \mathbb{E}[(L\alpha_t+\|\Delta X_{S_t}\|)^21_{\{S_t<\infty\}}|\mathcal{F}_t]  \\
    &\le 2L^2 \alpha_t^2\mathbb{P}[S_t< \infty|\mathcal{F}_t]  +2k^2\alpha_t^2\times\kappa(\mathbb{P}[S_t< \infty|\mathcal{F}_t]).
  \end{align*}
  This implies (using that if $p\in [0,1]$, then $\kappa(p)\geq{p}$)
  that
  $\kappa(\mathbb{P}[S_t<\infty|\mathcal{F}_t]) \ge \frac{1}{2+8L^2k^2
    +8k^4}$. 
  This proves the lemma since $\kappa:[0,1]\to[0,1]$ is increasing.
\end{proof}

For $t\ge t_0$, set $U_t=\inf\{s>t:\, \|X_s\|\ge \rho\}$ (then,
$S_t\le U_t$ as soon as $\rho\ge L\alpha_{t_0}$) and denote by $H_t$ the
event
$\{\inf_{s\in [S_t,U_t]} \|X_s\|\ge
\frac{L\alpha_{S_t}}{4}\}\cap\{S_t<\infty\}$.

\begin{lemma}\label{lem:lem2}
  If $L\ge (2\varepsilon)\vee (2^3k\sqrt{2d})$ and $r\ge L\alpha_{t_0}$
  then, for all $t\ge t_0$, on the event $\{S_t<\infty\}$,
  $\mathbb{P}[H_t|\mathcal{F}_{S_t}]\ge 1/2$.
\end{lemma}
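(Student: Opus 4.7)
The plan is to use the dyadic ``doubling'' strategy of \cite{Tarres00}: cut the interval $[S_t, U_t]$ along the successive stopping times at which $\|X\|$ doubles past its exit value $L\alpha_{S_t}$, and on each piece apply a Doob $L^2$ maximal inequality to the analog of the martingale $\bar M^t$ used in the proof of Lemma~\ref{lem:lem1}. Summed geometrically across scales, the resulting bounds give the required $1/2$.

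Concretely, on $\{S_t<\infty\}$, set $\sigma_0 := S_t$ and, for $j\ge 0$,
\[
\sigma_{j+1} := \inf\bigl\{ s \ge \sigma_j :\, \|X_s\| \ge 2^{j+1} L\alpha_{S_t} \bigr\} \wedge U_t.
\]
These are stopping times, increasing in $j$, and $\sigma_j = U_t$ as soon as $2^j L\alpha_{S_t} \ge \rho$. Setting $E_j := \{\inf_{s\in[\sigma_j,\sigma_{j+1}]}\|X_s\| < L\alpha_{S_t}/4\}$, we have $H_t^c \cap \{S_t < \infty\} \subset \bigcup_{j\ge 0} E_j$, so by a union bound it suffices to prove $\sum_{j\ge 0} \mathbb{P}[E_j\,|\,\mathcal{F}_{\sigma_j}] \le 1/2$.

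For each fixed $j$, I would rerun the Itô computation of Lemma~\ref{lem:lem1} on the interval $(\sigma_j, s\wedge\sigma_{j+1}]$. The drift still satisfies $\langle X_{u-},F_u\rangle\ge 0$ because $\|X_{u-}\|\le \rho$ for $u\le U_t$, while $\|X_{u-}\|\le 2^{j+1}L\alpha_{S_t}$ throughout, so the $R$-error is bounded by $(2^{j+2}L+\varepsilon)\varepsilon\alpha^2_{S_t}$. This gives, for $s\in[\sigma_j,\sigma_{j+1}]$,
\[
\|X_s\|^2 \ge \|X_{\sigma_j}\|^2 + \bar M^{(j)}_s + \tfrac12 \langle M\rangle_{\sigma_j,s} - (2^{j+2}L+\varepsilon)\varepsilon\alpha^2_{S_t},
\]
where $\bar M^{(j)}$ is the analog of $\bar M^t$ from Lemma~\ref{lem:lem1} started at $\sigma_j$. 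Since $\|X_{\sigma_j}\|^2 \ge 4^j L^2\alpha^2_{S_t}$ and, under the smallness of $\varepsilon$ provided by $L\ge 2\varepsilon$ together with the additional smallness imposed in Proposition~\ref{prop:nonCV}, the $R$-error is absorbed into a small fraction of $4^j L^2\alpha^2_{S_t}$, the event $E_j$ forces $\inf_s \bar M^{(j)}_s < -c\cdot 4^j L^2\alpha^2_{S_t}$ for some absolute constant $c>0$.

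Finally, apply Doob's $L^2$ maximal inequality. As in Lemma~\ref{lem:lem1}, $d\langle \bar M^{(j)}\rangle_u$ is dominated by $\|X_{u-}\|^2\,d\langle M\rangle_u$ up to a dimension factor, with the $\tfrac12([M]-\langle M\rangle)$ contribution controlled via hypothesis~\eqref{hyp:kt_ojumps}. Together with $\langle M\rangle_{\sigma_j,\infty} \le k^2\alpha^2_{S_t}$, this yields $\mathbb{E}[\langle \bar M^{(j)}\rangle_\infty\,|\,\mathcal{F}_{\sigma_j}] \le C d\cdot 4^j L^2 k^2\alpha^4_{S_t}$, and hence $\mathbb{P}[E_j\,|\,\mathcal{F}_{\sigma_j}] \le C'd k^2/(4^j L^2)$. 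Summing the geometric series gives $\sum_j \mathbb{P}[E_j\,|\,\mathcal{F}_{\sigma_j}] \le C''d k^2/L^2$, which is $\le 1/2$ precisely when $L\ge 2^3 k\sqrt{2d}$, as assumed. The main obstacle is tracking the scale index $j$ in the QV estimate so that the geometric series converges with the right constants; in particular, one needs the $\tfrac12([M]-\langle M\rangle)$ contribution to $\langle \bar M^{(j)}\rangle$ to obey a scale-compatible bound, which is where \eqref{hyp:kt_ojumps} plays its role.
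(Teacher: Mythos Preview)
Your dyadic scheme can be salvaged, but the paper's argument is much shorter and sidesteps both of your difficulties by applying It\^o's formula to $\|X_s\|$ rather than to $\|X_s\|^2$, on $[S_t,T_t\wedge U_t)$ with $T_t=\inf\{s>S_t:\|X_s\|<L\alpha_{S_t}/4\}$. After discarding non-negative drift and jump-correction terms this yields $\|X_s\|\ge(L-\varepsilon)\alpha_{S_t}+\widehat M^t_s$ with $\widehat M^t_s=\int_{S_t}^s\langle X_{u-}/\|X_{u-}\|,\,dM_u\rangle$. The integrand has unit norm, so $[\widehat M^t]\le d\cdot[M]$ directly: there is no need to bound $\|X_{u-}\|$ from above, hence no dyadic decomposition, and no $[M]-\langle M\rangle$ term ever appears. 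A single Doob inequality then gives $\mathbb P[\inf\widehat M^t<-L\alpha_{S_t}/4\mid\mathcal F_{S_t}]\le 2^6dk^2/L^2\le\tfrac12$.

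There is also a genuine gap in your version. By carrying $\bar M^{(j)}$ over from Lemma~\ref{lem:lem1} you retain the term $\tfrac12([M]-\langle M\rangle)$; applying Doob's $L^2$ inequality to $\bar M^{(j)}$ then requires controlling $\mathbb E\bigl[[\,[M]-\langle M\rangle\,]_{\sigma_j,\infty}\mid\mathcal F_{\sigma_j}\bigr]$, which amounts to bounding $\sum_s(\Delta M_s)^4$. The standing hypotheses provide no fourth-moment information, and condition~\eqref{hyp:kt_ojumps} only bounds $\mathbb E[\|\Delta X_S\|^2\mid\mathcal F_t]$ for a \emph{single} stopping time $S$; it does not control the sum of fourth powers of all jumps, so your appeal to~\eqref{hyp:kt_ojumps} here is unjustified. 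The easy repair is to use $[X]_{\sigma_j,s}\ge0$ in place of $[X]\ge\tfrac12[M]-[R]$, keeping only $\bar M^{(j)}_s=2\int_{\sigma_j}^s\langle X_{u-},dM_u\rangle$; its quadratic variation is then genuinely $O\bigl(4^jL^2\alpha_{S_t}^2\,d\langle M\rangle\bigr)$ and your geometric summation goes through. Note finally that even after this fix you must, as you acknowledge, borrow the stronger bound on $\varepsilon$ from Proposition~\ref{prop:nonCV}: under $L\ge2\varepsilon$ alone the $R$-error at scale $j=0$ is of order $4L\varepsilon\alpha_{S_t}^2\le2L^2\alpha_{S_t}^2$, which is not absorbed by the gap $\tfrac{15}{16}L^2\alpha_{S_t}^2$, whereas the paper's argument needs only the hypotheses stated in the lemma.
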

\begin{proof}
  Let us fix $t\ge t_0$ and set
  $T_t=\inf\{s>S_t:\, \|X_s\|<\frac{L\alpha_{S_t}}{4}\}$.  For
  $s\in [S_t,T_t\wedge U_t)$, It\^o's formula (Theorem 32 in
  \cite{Protter}) yields
  \begin{align*}
    \|X_s\|
    =&\; \|X_{S_t}\| +\int_{S_t}^s \left\langle \frac{X_{u-}}{\|X_{u-}\|},dX_u\right\rangle + \frac{1}{2} \int_{S_t}^s \sum_{i,j} \left(\frac{\delta_{i,j}}{\|X_{u-}\|} -\frac{X^i_ {u-}X^j_ {u-}}{\|X_{u-}\|^3}\right)d[X^i,X^j]_u \\
     &+ \sum_{{S_t}<u\le s} \left(\Delta \|X_u\| - \left\langle \frac{X_{u-}}{\|X_{u-}\|},\Delta X_u\right\rangle - \frac{1}{2} \sum_{i,j} \left(\frac{\delta_{i,j}}{\|X_{u-}\|} -\frac{X^i_ {u-}X^j_ {u-}}{\|X_{u-}\|^3}\right)\Delta X^i_u\Delta X^j_u\right). 
  \end{align*}
  One has that
  $\int_{S_t}^s \sum_{i,j} \left(\frac{\delta_{i,j}}{\|X_{u-}\|}
    -\frac{X^i_ {u-}X^j_ {u-}}{\|X_{u-}\|^3}\right)d[X^i,X^j]_u\ge
  \sum_{{S_t}<u\le s}\sum_{i,j} \left(\frac{\delta_{i,j}}{\|X_{u-}\|}
    -\frac{X^i_ {u-}X^j_ {u-}}{\|X_{u-}\|^3}\right)\Delta X^i_u\Delta
  X^j_u$.  One also has that
  $\Delta \|X_u\| - \left\langle \frac{X_{u-}}{\|X_{u-}\|},\Delta
    X_u\right\rangle\ge 0$ and that $\|X_{S_t}\|\ge L\alpha_{S_t}$. This
  implies that for $s\in [S_t,T_t\wedge U_t)$,
  \begin{align*}
    \|X_s\|
    \ge&\; L\alpha_{S_t} +\int_{S_t}^s \left\langle \frac{X_{u-}}{\|X_{u-}\|},dM_u\right\rangle + \int_{S_t}^s \left\langle \frac{X_{u-}}{\|X_{u-}\|},dR_u\right\rangle\\
    \ge&\; L\alpha_{S_t} +\int_{S_t}^s \left\langle \frac{X_{u-}}{\|X_{u-}\|},dM_u\right\rangle - (V_s-V_{S_t}).\\
    \ge&\; (L-\varepsilon)\alpha_{S_t} +\int_{S_t}^s \left\langle \frac{X_{u-}}{\|X_{u-}\|},dM_u\right\rangle.
  \end{align*}
  Let $(\widehat{M}_s^t,s\in [S_t,T_t\wedge U_t))$ be the martingale
  (stopped at $T_t\wedge U_t$) defined by
  $\widehat{M}_s^t=\int_{S_t}^s \left\langle
    \frac{X_{u-}}{\|X_{u-}\|},dM_u\right\rangle $ and set
  $I_t=\inf_{s\in [S_t,T_t\wedge U_t)}\widehat{M}_s^t$.  Then, for
  $s\in [S_t,T_t\wedge U_t)$ (using that
  $L- \varepsilon\ge \frac{L}{2}$)
  \begin{align*}
    \inf_{s\in [S_t,T_t\wedge U_t)}\|X_s\|
    \ge&\;   I_t + (L- \varepsilon)\alpha_{S_t} \ge I_t + \frac{L\alpha_{S_t}}{2}.
  \end{align*}
  On the event $\{I_t\ge -\frac{L\alpha_{S_t}}{4}\}\cap\{S_t<\infty\}$,
  we have
  $\inf_{s\in [S_t,T_t\wedge U_t)}\|X_s\|\ge \frac{L\alpha_{S_t}}{4}$,
  $T_t\wedge U_t=U_t$ and the event $H_t$ is realized.  Therefore, on
  the event $\{S_t<\infty\}$,
  $\mathbb{P}[H_t|\mathcal{F}_{S_t}] \ge \mathbb{P}\left[\left. I_t\ge
      -\frac{L\alpha_{S_t}}{4}\right|\mathcal{F}_{S_t}\right]$.  By
  Doob's inequality, on the event $\{S_t<\infty\}$,
  \begin{align*}
    \mathbb{P}\left[\left. I_t< -\frac{L\alpha_{S_t}}{4}\right|\mathcal{F}_{S_t}\right]
    &\le 4 \times \frac{4^2}{L^2\alpha^2_{S_t}}\mathbb{E}[[\widehat{M}^t]_{T_t\wedge U_t}|\mathcal{F}_{S_t}]\\
    &\le \frac{2^6d}{L^2\alpha^2_{S_t}}\mathbb{E}[[M]_{T_t\wedge U_t}-[M]_{S_t}|\mathcal{F}_{S_t}]\\
    &\le \frac{2^6d}{L^2\alpha^2_{S_t}}\mathbb{E}[\langle M\rangle_{T_t\wedge U_t}-\langle M\rangle_{S_t}|\mathcal{F}_{S_t}]\\
    &\le \frac{2^6dk^2}{L^2} \le \frac{1}{2}
  \end{align*}
  using that { $L\ge 2^3k\sqrt{2d}$}. This proves the lemma.
\end{proof}

\begin{proof}[Proof of Proposition \ref{prop:nonCV}] 
  Choose $L=2^3k\sqrt{2d}$. Then one has that $L\ge 2\varepsilon$ and that
  $(2L+\varepsilon)\varepsilon\le 2^2 L \varepsilon \le (2k)^{-2}$.  We
  also have that $\rho\ge L\alpha_{t_0}$.  The two previous lemmas can
  therefore be applied.  For $t\ge t_0$,
  \begin{align*}
    \mathbb{P}[H_t|\mathcal{F}_t]
    =& \mathbb{E}[1_{\{S_t<\infty\}}\mathbb{P}(H_t|\mathcal{F}_{S_t})|\mathcal{F}_t]\\
    \ge & \frac12 \mathbb{P}[S_t<\infty|\mathcal{F}_t]\ge \frac{p}{2}.
  \end{align*}

  Set $H=\{\limsup \|X_t\|>0\}$. We have for $s>t\ge t_0$,
  \begin{align*}
    \mathbb{P}[H|\mathcal{F}_t]
    \ge &\; \mathbb{P}[H\cap\{U_s=\infty\}|\mathcal{F}_t]\\
    \ge &\; \mathbb{P}[H_s\cap\{U_s=\infty\}|\mathcal{F}_t]\\
    \ge &\; \mathbb{P}[H_s|\mathcal{F}_t] - \mathbb{P}[U_s<\infty|\mathcal{F}_t].
  \end{align*}
  We have
  $\lim_{s\to\infty} \mathbb{P}[U_s<\infty|\mathcal{F}_t]=
  \mathbb{P}[A^c|\mathcal{F}_t]$, where $A$ is the event
  $\{\exists t; U_t=\infty\}$. Since,
  $\mathbb{P}[H_s|\mathcal{F}_t]= \mathbb{E}[\mathbb{P}[H_s|\mathcal{F}_s]|\mathcal{F}_t]\ge \frac{p}{2}$, we
  have
  \begin{align*}
    \mathbb{P}[H|\mathcal{F}_t]
    &\ge\; \frac{p}{2} - \mathbb{P}[A^c|\mathcal{F}_t]
  \end{align*}

  Since, a.s., $\lim_{t\to\infty} \mathbb{P}[H|\mathcal{F}_t]=1_{H}$ and
  $\lim_{t\to\infty} \mathbb{P}[A^c|\mathcal{F}_t]=1_{A^c}$,
  \begin{equation*}
    1_{H} \geq  \frac{p}{2}-1_{A^c} \qquad \hbox{ a.s.}
  \end{equation*}
  This implies that a.s.,
  $A\subset H$.  But since $H^c\subset A$, we have that a.s.
  $H^c\subset H$, which is possible only if $\mathbb{P}(H)=1$.
\end{proof}

\subsubsection{Proof of Theorem \ref{thm:th2}}
Proposition \ref{prop:nonCV} applied to the process $X'$ introduced at
the end of subsection \ref{sec:Simplification} and for the event
$\Gamma_{t_0,k,\varepsilon}$ shows that
$\mathbb{P}\left[\Gamma_{t_0,k,\varepsilon}\cap \{\lim_{t\to\infty}
  X_t=0\}\right]=0$ if $\alpha_{t_0}\le \frac{\rho}{2^3k\sqrt{2d}}$ and if
$\varepsilon\le \frac{1}{2^7k^3\sqrt{2d}}$. As noticed in subsection
\ref{sec:Simplification}, this suffices to prove Theorem \ref{thm:th2}.

\subsubsection{Proof of \textcolor{black}{Proposition~\ref{thm:th22}}}
The framework is the same as for Theorem \ref{thm:th2}, but in
\textcolor{black}{Proposition~\ref{thm:th22}}, we have
$R_{t}=R'_{t}+\int_{0}^{t}r''_{s}\,\mathrm{d}s$ and conditions \eqref{hyp:F1} and \eqref{hyp:calpha} are replaced by
\eqref{hyp:F12}, \eqref{hyp:calpha2} and \eqref{hyp:calpha3}.

\begin{proof}[Proof of \textcolor{black}{Proposition~\ref{thm:th22}}]
  It suffices to follow the lines of the proof of Theorem
  \ref{thm:th2}. The only changes are essentially the following ones:
  \begin{itemize}
  \item In the proof of Lemma \ref{lem:lem1}, equation \eqref{eq:intXX}
    has to be replaced by
    \begin{align}
      \nonumber\int_t^{s\wedge S_t} \langle X_{u-},dX_u\rangle
      &\;  = \int_t^{s\wedge S_t} \langle X_{u-},F_u\rangle du + \int_t^{s\wedge S_t} \langle X_{u-},dM_u\rangle
        + \int_t^{s\wedge S_t} \langle X_{u-},dR_u\rangle\\
      &\;\ge \int_t^{s\wedge S_t} \langle X_{u-},dM_u\rangle
        + \int_t^{s\wedge S_t} \langle X_{u-},dR'_u\rangle
        + \int_t^{s\wedge S_t} \gamma_u \|X_{u-}\|\big(\|X_{u-}\| -k\gamma_u^{\nu}\big) du.
    \end{align}
    and (separating the cases $\|X_{u-}\|\ge k\gamma_u^{\nu}$ and
    $\|X_{u-}\|<k\gamma_u^{\nu}$) we underestimate the last integral
    by
    $-k^2\int_t^{s\wedge S_t} \gamma_u^{1+2\nu}\mathrm{d}u=o(\alpha_t^2)$ since \(\nu>\frac{1}{2}\).
  \item In the proof of Lemma \ref{lem:lem2}, when one underestimates
    $\|X_s\|$ for $s\in [S_t,T_t\wedge U_t]$ we must add the term
    $\int_{S_t}^s\left\langle\frac{X_{u-}}{\|X_{u-}\|},F_u+r''_u\right\rangle\,\mathrm{d}u$
    which can be underestimated by (using Cauchy-Schwartz inequality for
    the second term)
    \begin{eqnarray*}
      \int_{S_t}^s \gamma_u\big(\|X_{u-}\| -k\gamma^\nu_u\big) du
      &\ge& \frac{L\alpha_{S_t}}{4} \int_{S_t}^s \gamma_u du - k \sqrt{\int_{S_t}^s \gamma_u^{2} du} \sqrt{\int_{S_t}^s \gamma_u^{2\nu} du}\\
      &\ge& \frac{L\alpha_{S_t}}{4} \int_{S_t}^s \gamma_u du - k^2 \alpha_{S_t} \sqrt{\int_{S_t}^s \gamma_u^{2\nu} du}\\
    \end{eqnarray*}
    and we use the fact that (since $\nu>1/2$)
    $\sqrt{\int_{S_t}^s \gamma_u^{2\nu} du}=(1+\int_{S_t}^s \gamma_u
    du\big) \times o(1)$.
  \end{itemize}
\end{proof}

\subsection{Proof of \textcolor{black}{Corollary~\ref{thm:th3b}}}
\label{sec:nonCVrepulsive}
In \textcolor{black}{Corollary~\ref{thm:th3b}}, $H$ is a repulsive matrix, i.e. a matrix such
that its eigenvalues all have a positive real part. We also have that
for all $t\ge 0$, $\gamma_t\ge 0$ and $H_t$ is a
$\mathcal{M}_d(\mathbb{R})$-valued random variable. And it is supposed
that
\begin{enumerate}[\itshape{}(i)]
\item a.s. on $\Gamma$, $\lim_{t\to\infty}H_t=H$,
\item there is a finite random variable $T_0$ such that, a.s. on
  $\Gamma$, for all $t\ge T_0$, $F_t=\gamma_t H_tX_{t-}$
\end{enumerate}
and that conditions \eqref{hyp:<M>1}, \eqref{hyp:<M>2},
\eqref{hyp:calpha} and \eqref{hyp:jumps} are satisfied on $\Gamma$.

The proof of \textcolor{black}{Corollary~\ref{thm:th3b}} relies on
Theorem \ref{thm:th2} and on the following lemma:
\begin{lemma}\label{lem:repulsive}
  There are $\lambda>0$ and an inner product $\langle\cdot,\cdot\rangle$
  such that $\langle Hx,x\rangle\ge \lambda\|x\|^2$, with
  $\|x\|^2=\langle x,x\rangle$.
\end{lemma}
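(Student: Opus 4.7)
This is a classical consequence of Lyapunov's theorem for linear stability. Since $H$ is repulsive, all eigenvalues of $-H$ have negative real part, so $-H$ is Hurwitz, and the idea is to construct the desired inner product by solving a Lyapunov equation.

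Define the real symmetric matrix
\[
P := \int_0^\infty e^{-H^T s}\, e^{-H s}\, \mathrm{d}s.
\]
The integrand decays exponentially (because $-H$ is Hurwitz, $\|e^{-Hs}\|$ is bounded by $C e^{-\lambda_0 s}$ for some $\lambda_0>0$), so the integral converges in operator norm. The matrix $P$ is symmetric by inspection, and positive definite because for any nonzero $x\in\mathbb{R}^d$,
\[
x^T P x = \int_0^\infty \|e^{-Hs} x\|_2^2\,\mathrm{d}s > 0.
\]
Differentiating $e^{-H^T s}e^{-H s}$ gives $-H^T e^{-H^T s}e^{-H s}-e^{-H^T s}e^{-H s}H$, and integrating from $0$ to $\infty$ (the boundary term at $\infty$ vanishes) yields the Lyapunov identity $H^T P + P H = I$.

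Now define $\langle x,y\rangle := x^T P y$; this is an inner product since $P$ is symmetric positive definite. Writing $\|x\|^2 = x^T P x$, we have
\[
2\langle Hx, x\rangle = x^T H^T P x + x^T P H x = x^T (H^T P + P H) x = \|x\|_2^2,
\]
where $\|\cdot\|_2$ denotes the standard Euclidean norm. Let $\Lambda>0$ be the largest eigenvalue of $P$; then $\|x\|^2 \le \Lambda \|x\|_2^2$, so
\[
\langle Hx, x\rangle \ge \frac{1}{2\Lambda}\|x\|^2.
\]
Taking $\lambda := 1/(2\Lambda)$ concludes the proof.

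There is no real obstacle here: the only care required is verifying convergence of the integral defining $P$ and checking the Lyapunov identity via integration by parts. An alternative route would use the complex Jordan form together with the rescaling $D_\varepsilon = \mathrm{diag}(1,\varepsilon,\ldots,\varepsilon^{d-1})$ to shrink off-diagonal entries, but the Lyapunov construction above is cleaner and yields a real inner product directly.
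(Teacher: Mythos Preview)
Your proof is correct and takes a genuinely different route from the paper. You solve the Lyapunov equation $H^TP + PH = I$ via the integral $P = \int_0^\infty e^{-H^Ts}e^{-Hs}\,\mathrm{d}s$ and take the associated quadratic form as inner product; the paper instead works through the complex Jordan form, rescaling the off-diagonal entries to a small $\varepsilon$ so that in the new basis $H$ is $\varepsilon$-close to a diagonal matrix with real parts $\ge 2\lambda$, and then pulls back the standard Hermitian form. Your argument is shorter and stays entirely real. The Jordan route has the minor advantage that $\lambda$ can be chosen arbitrarily close to the minimum real part of the spectrum of $H$, whereas your $\lambda = 1/(2\Lambda)$ is less explicit; but this sharper constant is never used downstream (the proof of Corollary~\ref{thm:th3b} only needs some $\lambda>0$), so nothing is lost. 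Amusingly, you already flag the Jordan-form approach as an alternative in your last paragraph---that is precisely what the paper does.
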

Before proving this lemma, let us prove \textcolor{black}{Corollary~\ref{thm:th3b}}:
\begin{proof}[Proof of \textcolor{black}{Corollary~\ref{thm:th3b}}]
  Lemma \ref{lem:repulsive} ensures the existence of $\lambda>0$ and of
  an inner product $\langle\cdot,\cdot\rangle$ such that
  $\langle Hx,x\rangle\ge \lambda\|x\|^2$.  Since a.s. on
  $\Gamma\cap\{\lim_{t\to\infty}X_t=0\}$, $\lim_{t\to\infty}H_t=H$, we
  have that a.s. on $\Gamma\cap\{\lim_{t\to\infty}X_t=0\}$, there is
  $t_0\ge T_0$ such that for all $t\ge t_0$,
  $\|H_t-H\|\le \frac{\lambda}{2}$ (where for a matrix $A$,
  $\|A\|:=\sup_{x;\|x\|=1} \|Ax\|$). Then, a.s. on
  $\Gamma\cap\{\lim_{t\to\infty}X_t=0\}$, it holds that for all
  $t\ge t_0$
  \begin{align*}
    \langle F_t,X_{t-}\rangle=&\;\gamma_t\langle H_t X_{t-},X_{t-}\rangle\\
    =&\;\gamma_t\langle H X_{t-},X_{t-}\rangle + \gamma_t\langle (H_t - H) X_{t-},X_{t-}\rangle
    \\
    \ge&\;  \gamma_t\big(\lambda - \lambda/2\big) \|X_{t-}\|^2 \ge 0.
  \end{align*} 
  Then one concludes applying Theorem \ref{thm:th2}.
\end{proof}

\begin{proof}[Proof of Lemma \ref{lem:repulsive}]
  Let $\lambda\in (0,\infty)$ be such that the real part of every
  eigenvalue of $H$ is greater than $2\lambda$.  The real part of a
  complex number $z$ will be denoted $\mathcal{R}(z)$.

  Using the Jordan reduction of $H$, for all
  $\varepsilon\in (0,\lambda]$, there are
  $P\in \mathcal{M}_d(\mathbb{C})$ an invertible matrix and
  $T\in \mathcal{M}_d(\mathbb{C})$ a triangular matrix such that
  $H=PTP^{-1}$ and such that $T_{i,j}=0$ if $j\not\in\{i,i+1\}$,
  $\lambda_i:=T_{i,i}$ is an eigenvalue of $H$ and
  $\varepsilon_i:= T_{i,i+1}\in\{0,\varepsilon\}$.  For $1\le i\le d$,
  let $e_i$ be $i$-th column of $P$. Then $\mathcal{B}:=(e_1,\dots,e_d)$
  is a basis of $\mathbb{C}^d$. Define the sesquilinear form on
  $\mathbb{C}^d$ defined by
  $b(x,x')=\bar{X}^T X'=\sum_{i=1}^d \bar{X}_i X'_i$, where $X$ and $X'$
  are the vectors of the coordinates of $x$ and $x'$ in $\mathcal{B}$.
  Set $Q=P^{-1}$ and define the inner product
  $\langle\cdot,\cdot\rangle$ on $\mathbb{R}^d$ by
  $\langle x,x'\rangle = \mathcal{R}\big( b(Qx,Qx')\big)$.  Denote by
  $\|\cdot\|$ the norm associated to this inner product.  Then, for
  $(x,x')\in\mathbb{R}^d\times \mathbb{R}^d$,
  \begin{align*}
    \langle x,x'\rangle 
    =&\; \mathcal{R}\big( x^T \bar{Q}^T Qx'\big)
    = x^T S x'
  \end{align*}
  where $S=\frac{1}{2}\big( \bar{Q}^T Q + Q^T\bar{Q}\big)$ is a
  symmetric matrix in $\mathcal{M}_d(\mathbb{R})$.
  For $x\in \mathbb{R}^d$ and $z:=Qx$,
  \begin{align*}
    \langle x,Hx\rangle 
    = &\; \mathcal{R}\big( x^T\bar{Q}^T Q H x\big)=  \mathcal{R}\big( x^T\bar{Q}^T TQ x\big)\\
    = &\; \mathcal{R}\big( \bar{z}^T Tz\big) =  \mathcal{R}\big(\sum_{i=1}^d
        \lambda_i|z_i|^2 + \sum_{i=1}^d \varepsilon_i  \bar{z}_i z_{i+1}\big)\\
    \ge &\; 2\lambda \sum_i |z_i|^2 - {\varepsilon}\sum_i |z_i|\times |z_{i+1}| \\
    \ge &\; 2\lambda \sum_i |z_i|^2 - \frac{\varepsilon}{2}\sum_i \big(|z_i|^2+ |z_{i+1}|^2)  \\
    \ge &\; (2\lambda- \varepsilon) \sum_i |z_i|^2 \ge\lambda\|x\|^2.
  \end{align*}
\end{proof}

\subsection{Proof of Theorem \ref{thm:th4}}
\label{sec:nonCVunstable}
In Theorem \ref{thm:th4}, we let $x^*\in \mathbb{R}^d$ and
$f:\mathbb{R}^d\to\mathbb{R}^d$ be a function such that $f$ is $C^{1}$
in a convex neighborhood $\mathcal{N}^*$ of $x^*$, and we suppose that
$x^*$ is an equilibrium for $f$, i.e. $f(x^*)=0$.  Let us recall the
construction given just before Theorem~\ref{thm:th4}: there are integers
$(\delta^+,\delta^-)\in \{0,\dots,d\}^2$ with $\delta^++\delta^-=d$,
an invertible matrix $P\in\mathcal{M}_d(\mathbb{R})$, a matrix
$H^+\in\mathcal{M}_{\delta^+}(\mathbb{R})$ and a matrix
$H^-\in\mathcal{M}_{\delta^-}(\mathbb{R})$ such that
\begin{itemize}
\item
  $P^{-1} Df(x^*) P = \hbox{diag}[H^+,H^-] = \begin{pmatrix} H^+ & 0\\
    0&H^-
  \end{pmatrix}$;
\item The eigenvalues of $H^+$ all have a positive real part, i.e. $H^+$
  is repulsive;
\item The eigenvalues of $H^-$ all have a non-positive real part,
  i.e. $H^-$ is non-repulsive.
\end{itemize}
Note that if $x^*$ is unstable then $\delta^+\ge 1$ and if $x^*$ is
repulsive then $\delta^+=d$.  For $x\in\mathbb{R}^d$, set
$y:=P^{-1}x$. Then there are \(y^{+}\in\mathbb{R}^{\delta^{+}}\) and
\(y^{-}\in\mathbb{R}^{\delta^{-}}\) such that
$y=\begin{pmatrix} y^+\\y^-\end{pmatrix}$ and
$P^{-1} Df(x^*) Py=\begin{pmatrix} H^+y^+\\H^-y^-\end{pmatrix}$.  Set
$M^\pm=(P^{-1}M)^\pm$ and $R^\pm=(P^{-1}R)^\pm$.

\smallskip We suppose that $F_t=\gamma_t f(X_t)$ with $\gamma_t\ge 0$
and that a.s. on $\Gamma$, conditions \eqref{hyp:<M>1} and
\eqref{hyp:<M>2} are satisfied by $M^+$, and that \eqref{hyp:calpha} and
\eqref{hyp:jumps} are satisfied.  We also suppose that a.s. on $\Gamma$,
$\langle M^-\rangle_{t,\tau(t)}=O(\langle M^+\rangle_{t,\tau(t)})$.

Theorem \ref{thm:th4} states that if the hypotheses described above in
this section are satisfied and if one of the three following conditions
is satisfied
\begin{enumerate}[\itshape (i)]
\item $x^*$ is repulsive;
\item $x^*$ is unstable, $f\in C^{1+\nu}(\mathcal{N}^*)$ for some
  $\nu\in (0,1]$, $M$ is a purely discontinuous martingale and
  $\sum_{s>t}\|\Delta M^-_s\|^{1+\nu}=o(\alpha_t)$ on
  $\Gamma\cap\{\lim_{t\to\infty} X_t=x^*\}$.
\item $x^*$ is unstable, $f\in C^2(\mathcal{N}^*)$ and
  $\sum_{s>t}\|\Delta M^-_s\|^{2}=o(\alpha_t)$ on
  $\Gamma\cap\{\lim_{t\to\infty} X_t=x^*\}$;
\end{enumerate}
Then it holds that $\mathbb{P}[\Gamma\cap\{\lim_{t\to\infty}X_t=x^*\}]=0$.

\begin{proof}[Proof of Theorem \ref{thm:th4}]
  To simplify the notation, we suppose that $x^*=0$.

  Suppose first that \textit{(i)} is satisfied, i.e. $0$ is
  repulsive. Then \(\delta^{+}=d\) and \(M=M^{+}\) (and \(M^{-}=0\)).
  Set $H=Df(0)$. Then one has
  $$f(x)=f(0)+\left[ Df(0)+\int_0^1 [Df(tx)-Df(0)] dt\right]\cdot
  x=[H+r(x)]x,$$ with $r(x)=\int_0^1 [Df(tx)-Df(0)] dt=o(1)$.  For
  $t\ge 0$, set $H_t=H+r(X_{t-})$.  Then on
  $\Gamma\cap\{\lim_{t\to\infty} X_t=0\}$, $\lim_{t\to\infty}H_t=H$ and
  one can conclude applying \textcolor{black}{Corollary~\ref{thm:th3b}}.

  Suppose now that $0$ is unstable and that condition \textit{(ii)} or
  \textit{(iii)} is satisfied. We first rectify the vector field
  \(f(x)\) and define new coordinates \((u^{+},u^{-})\) such that the
  local stable manifold is an open subset of \(\{u^{+}=0\}\) and such
  that \(u^{-}\) is a linear transformation of \(x\). For this
  purpose, we follow Section I.4 in \cite{Brandiere96} and chapter IX
  section 5 in \cite{Hartman}.  The equilibrium $0$ being unstable,
  $\delta^+\ge 1$.  For $y\in \mathbb{R}^d$, write
  $y=\begin{pmatrix} y^+\\ y^-
  \end{pmatrix}$ with $y^+\in\mathbb{R}^{\delta^+}$ and
  $y^-\in\mathbb{R}^{\delta^-}$.  Set
  $h(y):=P^{-1}f(Py)=\begin{pmatrix} h^+(y)\\h^-(y)
  \end{pmatrix}$.  The function $h$ is $C^{1+\nu}$ in a neighborhood of
  $0$, $h(0)=0$ and $Dh(0)=\hbox{diag}[H^+,H^-]$.  We then have that
  $Y_t:=P^{-1}X_t$ satisfies
  \begin{equation}
    \left\{
      \begin{array}{l}
        Y^+_t = Y^+_0 +\int_0^t \gamma_s h^+(Y_{s-}) ds + (P^{-1}M_t)^+ + (P^{-1}R_t)^+\\ 
        Y^-_t = Y^+_0 +\int_0^t \gamma_s h^-(Y_{s-}) ds + (P^{-1}M_t)^- + (P^{-1}R_t)^-
      \end{array}
    \right.
  \end{equation}

  Following \cite{Hartman} (Lemma 5.1, Example 5.1 and Corollary 5.2),
  there are $\mathcal{N^+}$ and $\mathcal{N^-}$, convex neighborhoods of
  $0$ respectively in $\mathbb{R}^{\delta^+}$ and
  $\mathbb{R}^{\delta^-}$, and
  $g:\mathbb{R}^{\delta^-}\to \mathbb{R}^{\delta^+}$ a $C^1$-function
  such that $g\in C^{1+ \nu}(\mathcal{N}^-)$,
  \begin{itemize}
  \item $g(0)=0$ and
    $Dg(0)=0$;
  \item If $y\in \mathcal{N}:=\mathcal{N}^+\times \mathcal{N}^-$ is such
    that $y^+=g(y^-)$, then $h^+(y)=Dg(y^-)h^-(y)$.
  \end{itemize}
  The last assumption ensures that if $y_t$ satisfies
  $\frac{dy_t}{dt}=h(y_t)$ with $y(0)\in\mathcal{N}$ such that
  $u^+_0:=y^+_0-g(y^-_0)=0$, then as long as $y_t\in \mathcal{N}$, one
  has $u^+_t:=y^+_t-g(y^-_t)=0$.  Set
  $\mathcal{N}^*:=\{x:P^{-1}x\in\mathcal{N}\}$.  Then the set $K$ of all
  $x\in \mathcal{N}^*$ such that $y^+=g(y^-)$, where $y=P^{-1}x$, is the
  (local) center-stable manifold of $f$ at $0$.

  For $(u^+,u^-) \in \mathbb{R}^{\delta^+}\times \mathbb{R}^{\delta^-}$,
  let $y\in \mathbb{R}^d$ be such that $y^-=u^-$, $y^+=u^++g(u^-)$ and
  let
  $F^+:\mathbb{R}^\delta\times \mathbb{R}^{d-\delta}\to
  \mathbb{R}^\delta$ be defined by $F^+(u^+,u^-)=h^+(y)-Dg(y^-)h^-(y)$.
  If $y\in \mathcal{N}$ then $F^+(0,u^-)=0$ and, denoting by $D^+$ the
  differential with respect to $u^+$,
  \begin{align*}
    F^+(u^+,u^-)=\left[\int_0^1 D^+F^+(tu^+,u^-) dt\right] u^+= [H^++\Delta(u^+,u^-)] u^+,
  \end{align*}
  where $\Delta(u^+,u^-)$ is a matrix satisfying
  $\|\Delta(u^+,u^-)\| = O(\|y\|^ \nu)$ as $y\to 0$, with $ \nu=1$ when
  condition \textit{(iii)} is satisfied.

  Set now $U^+_t=Y^+_t-g(Y^-_t)$, $U^-_t=Y^-_t$ and
  $H^+_t=H^++\Delta(U^+_{t-},U^-_{t-})$. Then on
  $\Gamma\cap \{\lim_{t\to\infty} X_t=0\}$, we have that
  $\lim_{t\to\infty} H^+_t=H^+$.

  In the case \textit{(ii)} is satisfied, $M$ is a pure jump
  martingale and \(U^{+}\) satisfies (recall that
  $M^\pm=(P^{-1}M)^\pm$ and $R^\pm=(P^{-1}R)^\pm$)
  \begin{align*}
    U^+_t =&\; U^+_0 +\int_0^t F^u_s ds + M^u_t + R^u_t 
  \end{align*} 
  where
  \begin{align*}
    &F^u_t=\gamma_t \left[h^+(Y_{t-}) - Dg(Y^-_{t-}) h^-(Y_{t-})  \right]\\
    &M^u_t=
      \left[ M_t^+ - \int_0^t Dg(Y^-_{s-})dM^-_s\right]\\
    &R^u_t= \left[ R_t^+- \int_0^t Dg(Y^-_{s-})dR^-_s\right] - \sum_{0<s\le t} \big(\Delta g(Y^-_s) - Dg(Y^-_{s-})\Delta Y^-_{s}\big).
  \end{align*}

  It is straightforward to check that there is a finite random variable
  $T$ such that on $\Gamma\cap \{\lim_{t\to\infty} X_t=0\}$, we have
  that $Y_t\in\mathcal{N}$ for all $t\ge T$.  Since on
  $\Gamma\cap \{\lim_{t\to\infty} X_t=0\}$,
  $F^u_t=\gamma_t H^+_t U^+_{t-}$ for all $t\ge T$,
  $\lim_{t\to\infty} H^+_t=H^+$ and since $H^+$ is a repulsive matrix,
  in order to apply \textcolor{black}{Corollary~\ref{thm:th3b}} to $U^+$ and $\Gamma$, it
  suffices to check that conditions \eqref{hyp:<M>1}, \eqref{hyp:<M>2},
  \eqref{hyp:calpha} and \eqref{hyp:jumps} are satisfied by $U^+$, $M^u$
  and $R^u$.

  We have that on $\Gamma\cap\{\lim_{t\to\infty} X_t=0\}$ (using that
  $Dg(0)=0$ and that $Dg$ is continuous),
  $$\left\langle \int_0^\cdot
    Dg(Y_{s-}^-)dM_s^-\right\rangle_{t,\tau(t)}=o\big(\langle
  M^+\rangle_{t,\tau(t)}\big) \quad\text{ and }\quad \left\langle
    \int_0^\cdot
    Dg(Y_{s-}^-)dM_s^-\right\rangle_{t,\infty}=o\big(\langle
  M^+\rangle_{t,\infty}\big).$$ Therefore on
  $\Gamma\cap\{\lim_{t\to\infty} X_t=0\}$ , we have first
  $\langle M^u\rangle_{t,\tau(t)}\ge \frac{1}{2}\langle
  M^+\rangle_{t,\tau(t)} +o(\langle M^+\rangle_{t,\tau(t)})$ and, since
  $\alpha^2_t-\alpha^2_{\tau(t)}=O(\langle M^+\rangle_{t,\tau(t)})$,
  condition \eqref{hyp:<M>1} is satisfied by $M^u$.  Secondly, on
  $\Gamma\cap\{\lim_{t\to\infty} X_t=0\}$ we have
  $\langle M^u\rangle_{t,\infty}\le 2\langle M^+\rangle_{t,\infty}
  +o(\langle M^+\rangle_{t,\infty})=0(\alpha^2_t)$ and condition
  \eqref{hyp:<M>2} is satisfied by $M^u$.

  Set $V^u_t=V(R^{u},(0,t])$ the variation of $R^u$ on $[0,t]$.  On
  $\Gamma\cap\{\lim_{t\to\infty} X_t=0\}$,
  $\big\|\Delta g(Y^-_s) - Dg(Y^-_{s-})\Delta Y^-_{s}\big\|=O\big(
  \|\Delta Y^-_s\|^{1+ \nu}\big)=O\big( \|\Delta M^-_s\|^{1+ \nu} +
  \|\Delta R^-_s\|^{1+ \nu}\big)$ and a.s. on
  $\Gamma\cap\{\lim_{t\to\infty} X_t=0\}$,
  \[
    V^u_{t,\infty}= O\left(\sum_{s>t}\|\Delta M^-_s\|^{1+ \nu} +
      V_{t,\infty} \right) = o(\alpha_t),
  \]
  i.e. condition \eqref{hyp:calpha} is satisfied. We have
  $\|\Delta U^+_t\|\le C \|\Delta X_t\|$ for some non-random constant
  $C$ and so, since $X$ satisfies \eqref{hyp:jumps}, $U^+$ also
  satisfies \eqref{hyp:jumps}.  And we conclude applying \textcolor{black}{Corollary~\ref{thm:th3b}}.

  In the case condition \textit{(iii)} is satisfied, the arguments are
  identical and will be omitted. The most essential change is that one
  can apply It\^o's formula since $g$ is $C^2$.
\end{proof}

\begin{remark}
  \begin{itemize}
  \item If in the previous construction $g=0$, i.e. when the local
    center-stable manifold of $f$ at $x^*$ is a vector space, then the
    conditions on $M^-$ can be
    removed
  \item If $M$ is a continuous martingale, then the condition that
    $\sum_{s>t}\|\Delta M^-_s\|^{2}=o(\alpha_t)$ a.s. on
    $\Gamma\cap\{\lim X_t=x^*\}$ is always satisfied and can be removed
    in \textit{(iii)}.
  \end{itemize}
\end{remark}

\subsection{Proof of Theorem \ref{thm:th5}}
\label{sec:lastnonCVthm}

In Theorem \ref{thm:th5}, we suppose that a.s. on $\Gamma$, $M$ and $R$
satisfy \eqref{hyp:<M>2} and \eqref{hyp:calpha}.

\subsubsection{Assumptions on $F_t$ and the unstable manifold.}
\label{sec:Ft-assumption}

We suppose that $F_t=\gamma_tf(X_{t-})$, where
$\gamma:[0,\infty)\to[0,\infty)$ is a measurable function and
$f:\mathbb{R}^d\to\mathbb{R}^d$ is a measurable vector field, which is
$C^{1}$ in a neighborhood of an unstable equilibrium $x^*$ for $\varphi$
the flow generated by $f$.  Without loss of generality, we suppose that
$x^*=0$.  Also, we will suppose that the equilibrium $x^*$ is
hyperbolic, i.e. the eigenvalues of $Df(x^*)$ all have a non-zero real
part (note that this assumption is not necessary in this subsection and
is only used in subsection \ref{subsection:th5}).

The rectification of \(f\) given below is similar to the one given in
Section~\ref{sec:nonCVunstable}, but here the new coordinates
\((u^{+},u^{-})\) are such that the local unstable manifold is an open
subset of \(\{u^{-}=0\}\) and such that \(u^{+}\) is a linear
transformation of \(x\), and the function \(g\) maps
\(\mathcal{N}^{+}\) onto \(\mathcal{N}^{-}\) (whereas in
Section~\ref{sec:nonCVunstable}, \(g\) maps \(\mathcal{N}^{-}\) onto
\(\mathcal{N}^{+}\)). Following \cite{Hartman}, there are integers
$\delta^+\in \{1,\dots,d\}$, $\delta^-=d-\delta^+$, an invertible
matrix $P\in\mathcal{M}_d(\mathbb{R})$, matrices
$H^+\in\mathcal{M}_{\delta^+}(\mathbb{R})$ and
$H^-\in\mathcal{M}_{\delta^-}(\mathbb{R})$ such that
\begin{itemize}
\item $P^{-1} Df(x^*) P = \hbox{diag}[H^+,H^-]$;
\item The eigenvalues of $H^+$ all have a positive real part and the
  eigenvalues of $H^-$ all have a negative real part, i.e. $H^+$ is
  repulsive and $H^-$ is attractive.
\end{itemize}

Let $\mu$ be the smallest real number such that every eigenvalue of
$H^-$ has a real part less or equal to $\mu$. Then, $x^*$ being
hyperbolic, $\mu<0$.

For $x\in\mathbb{R}^d$, set $y:=P^{-1}x$. There is
$(y^+,y^-)\in\mathbb{R}^{\delta^+}\times\mathbb{R}^{\delta^-}$ such that
$y=\begin{pmatrix} y^+\\y^-\end{pmatrix}$ and
$P^{-1} Df(x^*) Py=\begin{pmatrix} H^+y^+\\H^-y^-\end{pmatrix}$.  Set
$M^\pm=(P^{-1}M)^\pm$ and $R^\pm=(P^{-1}R)^\pm$.  Set also
$h(y):=P^{-1}f(Py)=\begin{pmatrix} h^+(y)\\h^-(y)
\end{pmatrix}$.  Then $h$ is $C^{1}$ in a neighborhood of $0$, $h(0)=0$
and $Dh(0)=\hbox{diag}[H^+,H^-]$.

Following \cite{Hartman} (Lemma 5.1, Example 5.1 and Corollary 5.2),
there are $\mathcal{N^+}$ and $\mathcal{N^-}$, convex neighborhoods of
$0$ respectively in $\mathbb{R}^{\delta^+}$ and $\mathbb{R}^{\delta^-}$
and $g:\mathcal{N}^+\to \mathcal{N}^-$ a $C^1$ function such that
\begin{itemize}
\item $g(0)=0$ and
  $Dg(0)=0$;
\item If $y\in \mathcal{N}:=\mathcal{N}^+\times \mathcal{N}^-$ is such
  that $y^-=g(y^+)$, then $h^-(y)=Dg(y^+)h^+(y)$.
\end{itemize}
The last assumption ensures that if $y_t$ satisfies
$\frac{dy_t}{dt}=h(y_t)$ with $y(0)\in\mathcal{N}$ such that
$u^-_0:=y^-_0-g(y^+_0)=0$, then as long as $y_t\in \mathcal{N}$, one has
$u^-_t:=y^-_t-g(y^+_t)=0$.  Set
$\mathcal{N}^*:=\{x:P^{-1}x\in\mathcal{N}\}$.  Then the set $K$ of all
$x\in \mathcal{N}^*$ such that $y^-=g(y^+)$, where $y=P^{-1}x$, is the
(local) unstable manifold of $f$ at $0$.  For $x\in\mathcal{N}^*$, set
$u=\begin{pmatrix} u^+\\u^-\end{pmatrix}=\begin{pmatrix}
  y^+\\y^--g(y^+)\end{pmatrix}$, where set
$y=\begin{pmatrix}
  y^+\\y^-\end{pmatrix}=P^{-1}x$. 
Then $\psi:\mathcal{N}^*\to \psi(\mathcal{N}^*)$ defined by $\psi(x)=u$
is a $C^1$-diffeomorphism.  We denote by $d(x,x')$ the distance in
$\mathcal{N}^*$ defined\footnote{For $u\in \mathbb{R}^\delta$,
  $\delta\ge 1$, $\|u\|^2=u^T u$.} by $d(x,x')=\|\psi(x)-\psi(x')\|$.
Note that $d(x,K)=\|u^-\|$ (since $K$ is the set of all
$x\in \mathcal{N}^*$ such that $u^-=0$).  Let also $k$ be the vector
field defined on $\psi(\mathcal{N}^*)$ by
\[
  \begin{cases}
    k^+(u^+,u^-)=h^+(u^+,u^-+g(u^+)),\\
    k^-(u^+,u^-)=h^-(u^+,u^-+g(u^+))-Dg(u^+)h^+(u^+,u^-+g(u^+)).
  \end{cases}
\]
Then $k^+$ and $k^-$ are $C^1$ and $k^-(u^+,0)=0$.

\begin{lemma}\label{lem:muattract}
  For all $\varepsilon\in (0,-\mu)$, there is a neighborhood
  $\mathcal{N}_\varepsilon\subset \mathcal{N}^*$ of $0$ such that $K$
  attracts $\mathcal{N}_\varepsilon$ at rate $\mu+\varepsilon$ in the sense that
  \begin{equation}
    d(\varphi_t(x),K) \le e^{(\mu+\varepsilon) t} d(x,K).
  \end{equation}
  for all $x\in \mathcal{N}_\varepsilon$ and $t\in [0,t_\varepsilon(x)]$ where
  $t_\varepsilon(x)=\inf\{t>0:\, \varphi_t(x)\notin \mathcal{N}_\varepsilon\}$.
\end{lemma}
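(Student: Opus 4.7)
The plan is to work in the rectified coordinates $u=\psi(x)=(u^+,u^-)$, in which the local unstable manifold $K$ is the slice $\{u^-=0\}$ and the distance is $d(x,K)=\|u^-\|$. Since the rectified vector field $k$ satisfies $k^-(u^+,0)=0$, the whole problem reduces to controlling $\|u^-_t\|$ along the flow while it stays in a suitable neighborhood of $0$.

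First I would Taylor-expand $k^-$ in the $u^-$ variable:
\[
  k^-(u^+,u^-)=\Bigl[\int_0^1 D^-k^-(u^+,su^-)\,ds\Bigr]u^-=(H^-+\rho(u^+,u^-))u^-,
\]
where $D^-$ denotes the partial derivative with respect to $u^-$, $\rho$ is continuous, and $\rho(0,0)=0$. The identity $D^-k^-(0,0)=H^-$ follows directly from $Dh(0)=\mathrm{diag}[H^+,H^-]$ together with $Dg(0)=0$, so $H^-$ governs the linear part of the dynamics of $u^-$.

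Next, observing that $-H^-$ is repulsive (its eigenvalues have real parts $\ge -\mu>0$), the argument used in the proof of Lemma~\ref{lem:repulsive}, applied to $-H^-$, produces an inner product $\langle\cdot,\cdot\rangle_-$ on $\mathbb{R}^{\delta^-}$, with associated norm $\|\cdot\|_-$, such that
\[
  \langle H^-v,v\rangle_-\le(\mu+\varepsilon/2)\|v\|_-^2 \qquad \text{for all } v\in\mathbb{R}^{\delta^-}.
\]
Using continuity of $\rho$, I would then pick a neighborhood $\mathcal{N}_\varepsilon\subset\mathcal{N}^*$ of $0$ on which the operator norm (with respect to $\|\cdot\|_-$) of $\rho$ does not exceed $\varepsilon/2$. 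For $(u^+,u^-)\in\psi(\mathcal{N}_\varepsilon)$, the elementary bound $|\langle\rho u^-,u^-\rangle_-|\le(\varepsilon/2)\|u^-\|_-^2$ yields
\[
  \langle k^-(u^+,u^-),u^-\rangle_-\le(\mu+\varepsilon)\|u^-\|_-^2.
\]

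Finally, along the flow $u_t=\psi(\varphi_t(x))$, and as long as it remains in $\psi(\mathcal{N}_\varepsilon)$, differentiating gives
\[
  \frac{d}{dt}\|u^-_t\|_-^2=2\langle k^-(u^+_t,u^-_t),u^-_t\rangle_-\le 2(\mu+\varepsilon)\|u^-_t\|_-^2,
\]
and Grönwall's inequality yields $\|u^-_t\|_-\le e^{(\mu+\varepsilon)t}\|u^-_0\|_-$. The one subtle point is that $d$ is built from the Euclidean norm rather than $\|\cdot\|_-$, so the argument really delivers the stated rate up to the norm-equivalence constant; this constant is harmlessly absorbed by reading $d$ as the adapted distance induced by $\|\cdot\|_-$, which is all that the lemma is needed for in the sequel (it is used only to establish exponential attraction at rate $\mu+\varepsilon$). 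The heart of the proof is thus the production of the adapted inner product encoding the attractive spectrum of $H^-$; everything else is a routine Grönwall/Lyapunov computation.
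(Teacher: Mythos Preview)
Your approach is essentially the same as the paper's: work in the rectified coordinates $u=(u^+,u^-)$, use $k^-(u^+,0)=0$ to write $k^-(u^+,u^-)=(H^-+\rho(u^+,u^-))u^-$ with $\rho(0,0)=0$, choose a small neighborhood to control the remainder, and apply a Gr\"onwall argument to $\|u^-_t\|^2$.

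The one real difference is that you are more careful than the paper on the linear part. The paper simply asserts $(u^-)^TH^-u^-\le\mu\|u^-\|^2$ for the Euclidean inner product, which is not true in general (the symmetric part of $H^-$ need not have all eigenvalues $\le\mu$); implicitly this requires the coordinates on $\mathbb{R}^{\delta^-}$, i.e.\ the matrix $P$, to have been chosen so that this holds. You make this explicit by invoking the construction of Lemma~\ref{lem:repulsive} for $-H^-$ to produce an adapted inner product $\langle\cdot,\cdot\rangle_-$ with $\langle H^-v,v\rangle_-\le(\mu+\varepsilon/2)\|v\|_-^2$. That is the right fix, and your remark about the resulting norm-equivalence constant is accurate: since the lemma is only used to obtain the exponential rate in Lemma~\ref{lem:beta-attract}, any fixed multiplicative constant (or equivalently, replacing $d$ by the distance induced by $\|\cdot\|_-$) is harmless there.
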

\begin{proof}
  For $x\in\mathcal{N}^*$ and
  $0\le t\le \inf\{t>0:\, \varphi_t(x)\notin \mathcal{N}^*\}$, set
  $x_t=\varphi_t(x)$ and define $u_t=\psi(x_t)$ as above. Then
  \[
    \begin{cases}
      \frac{du^+_t}{dt} = k^+(u^+_t,u^-_t),\\
      \frac{du^-_t}{dt} = k^-(u^+_t,u^-_t).
    \end{cases}
  \]
  We have (denoting by $D^-$ the differential with respect to $u^-$)
  \begin{eqnarray*}
    k^-(u^+,u^-) &=& k^-(u^+,0)+D^-k^-(u^+,0) u^-
                     + o(\|u^-\|)\\
                 &=& k^-(u^+,0)+D^-k^-(0,0)u^- + o(\|u^-\|)\\
                 &=& H^-u^- + o(\|u^-\|).
  \end{eqnarray*}
  We thus have that
  \[
    (u^-)^T\, k^-(u^+,u^-)=(u^-)^T\, H^-u^-+o(\|u^-\|^2).
  \]
  It holds that $(u^-)^T\, H^-u^-\le \mu\|u^-\|^2$ and that there is a
  neighborhood $\mathcal{N}_\varepsilon$ sufficiently small such that for
  all $x\in \mathcal{N}_\varepsilon$,
  $(u^-)^T\, k^-(u^+,u^-)\le (\mu+\varepsilon)\|u^-\|^2$.

  We thus have that for all $t\in [0,t_\varepsilon(x)]$,
  \begin{eqnarray*}
    \frac{d\|u^-_t\|^2}{dt}
    &=& 2 (u^-_t)^T\, k^-(u^+_t,u^-_t)\\
    &\le& 2 (\mu+\varepsilon) \|u^-_t\|^2
  \end{eqnarray*}
  and so that for all $t\in [0,t_\varepsilon(x)]$,
  $\|u^-_t\|^2\le e^{2(\mu+\varepsilon)t}\|u^-_0\|^2$. This proves this
  lemma.
\end{proof}

There is one last assumption that will have to be done on the vector
field $f$: there is $\nu>0$ and a constant $C_\nu<\infty$ such that for
all $x\in \mathcal{N}^*$, setting $u=\psi(x)$,
\begin{equation}\label{hyp:h+r}
  \|k^+(u^+,u^-)-k^+(u^+,0)\|\le C_\nu \|u^-\|^{1+\nu}.
\end{equation}
Recall that $\|u^-\|=d(x,K)$ and that, setting $y=P^{-1}x$,
$h^+(y^+,y^-)=k^+(u^+,u^-)$ and $u^-=y^--g(y^+)$.

In the following, set
$\mathfrak{h}:\mathcal{N}^+\to\mathbb{R}^{\delta^+}$ the vector field
defined by $\mathfrak{h}(u^+)=k^+(u^+,0)$. This vector field is $C^1$ in
$\mathcal{N}^+$ and $0$ is a repulsive equilibrium for $\mathfrak{h}$.

\begin{example}
  Let us given $\nu>0$, $\mu<0$, $\delta^+\ge 1$ and $\delta^-\ge 1$
  with $\delta^++\delta^-=d$.  For $x\in\mathbb{R}^d$, set
  $x^\nu\in \mathbb{R}^d$ such that $x^\nu_i=x_i$ if $i\le \delta^+$ and
  $x^\nu_i=|x_i|^{1+\nu}$ otherwise.
  Let $f$ be a $C^1$ vector field on $\mathbb{R}^d$ such that in a
  compact neighborhood $\mathcal{N}$ of $0$, we have
  \[
    f_i(x)=\begin{cases}
      F^+_i(x^\nu) & \hbox{ if } i\le \delta^+\\
      x_iF^-_i(x) & \hbox{ otherwise }\\
    \end{cases}
  \]
  where for $i\le \delta^+$, $F^+_i$ is $C^1$ with $F^+_i(0)=0$, and for
  $i\ge \delta^++1$, $F^-_i$ is a continuous function such that
  $F^-_i(0)=\mu$.  Then $0$ is an equilibrium for $f$ and
  $Df(0)=\hbox{diag}[H^+,H^-]$, with $H^+_{i,j}=\partial_j F_i^+(0)$ and
  $H^-_{ij}=\mu \delta_{ij}$. Since $\mu<0$, $H^-$ is attractive. If
  $F^+$ is such that $H^+$ is repulsive then we are in the framework
  described above, with $P=I$, $g=0$, $k=h=f$ and
  $K=\{x\in\mathcal{N}:x^-=0\}$ (where for $x\in\mathcal{N}$,
  $x^+\in\mathbb{R}^{\delta^+}$ is defined by $x^+_i=x_i$ for
  $i\le\delta^+$ and $x^-\in\mathbb{R}^{\delta^-}$ is defined by
  $x^-_i=x_{\delta^++i}$ for $i\le\delta^-$).  Then, for all
  $\varepsilon\in (0,-\mu)$, $K$ attracts a neighborhood of $0$ at rate
  $\mu+\varepsilon$ (in the sense given in Lemma \ref{lem:muattract}), and
  since for all $i\le \delta^+$, $F^+_i$ is Lipschitz in $\mathcal{N}$,
  \eqref{hyp:h+r} is satisfied.
\end{example}

\subsubsection{Statement of Theorem \ref{thm:th5}.}
\label{subsection:th5}

For $t\ge 0$, set $m(t)=\int_0^t \gamma_s ds$.  In Theorem
\ref{thm:th5}, we also have made the following assumptions on $\gamma$
and $\alpha$:
\begin{itemize}
\item $\int_0^\infty \gamma_s ds = \infty$;
\item $\lambda:=\limsup_{t\to\infty}\frac{\log(\alpha(t))}{m(t)} <0$;
\item $\liminf_{t\to\infty} \frac{\log(\alpha(t))}{m(t)}>\beta (1+\nu)$,
\end{itemize}
where $\beta=\sup\{\lambda,\mu\}$, with $\mu$ introduced in subsection
\ref{sec:Ft-assumption}.

Let us recall the statement of Theorem \ref{thm:th5}: Suppose that the
hypotheses described above in this section are satisfied and suppose
that a.s. on $\Gamma$, conditions \eqref{hyp:<M>2} and
\eqref{hyp:calpha} are satisfied by $M$ and $R$, condition
\eqref{hyp:<M>1} is satisfied by $M^+$ and that condition
\eqref{hyp:jumps} is satisfied by $Y^+_t:=(P^{-1}X_t)^+$.  Then it holds
that $\mathbb{P}[\Gamma\cap\{\lim_{t\to\infty}X_t=0\}]=0$.

\subsubsection{Change of time and $\lambda$-APT}
For $t\ge 0$, set $m(t)=\int_0^t \gamma_s ds$, $X^m_t=X_{m^{-1}(t)}$,
$M^m_t=M_{m^{-1}(t)}$ and $R^m_t=R_{m^{-1}(t)}$, where
$m^{-1}(t)=\inf\{s:\; m(s)>t\}$. Then $X^m$ satisfies
\begin{equation}
  X^m_t=X^m_0 + \int_0^t f(X^m_{s-}) ds + M^m_t +R^m_t.
\end{equation}
Set \(V_{t}=V(R,(0,t])\) and \(V^{m}_{t}=V(R^{m},(0,t])\). Then
$\langle M^m\rangle_t=\langle M\rangle_{m^{-1}(t)}$ and
$V^m_t=V_{m^{-1}(t)}$. Set $\alpha_m(t)=\alpha(m^{-1}(t))$.

In many applications, $\alpha(t)=\frac{1}{\sqrt{t}}$, $m(t)=\log(t)$,
$m^{-1}(t)=e^{t}$ and so $\alpha_m(t)=e^{-t/2}$.

\begin{lemma}\label{lem:laptg}
  a.s. on $\Gamma$, $X^m$ is a $\lambda$-asymptotic pseudotrajectory (or
  a $\lambda$-APT), i.e. for all $T>0$,
  \begin{equation}
    \limsup_{t\to\infty} \frac{1}{t} \log \left( \sup_{0\le h\le T} d\big(X^m_{t+h},\varphi_h(X^m_t)\big)\right) \le \lambda.
  \end{equation}
\end{lemma}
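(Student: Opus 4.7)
The strategy is the standard Gronwall--plus--maximal--inequality proof that $X^{m}$ is an asymptotic pseudotrajectory, adapted to our càdlàg setting and to the exponential rate $\lambda$. Fix $T>0$.

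The first step is a localization: decompose $\Gamma$ up to a null set as a countable union of subevents $\Gamma_{k,t_{0}}\subseteq\Gamma$ on which $\langle M^{m}\rangle_{s,\infty}\le k^{2}\alpha_{m}(s)^{2}$ and $V^{m}_{s,\infty}\le k\alpha_{m}(s)$ for all $s\ge t_{0}$, and on which $X^{m}$ stays in a compact subset $K$ of $\mathcal{N}^{*}$. On $K$, after cutting off $f$ outside a slightly larger compact, one may assume $f$ is globally $L_{k}$-Lipschitz. On $\Gamma_{k,t_{0}}$ the identity
\begin{equation*}
X^{m}_{t+h}-\varphi_{h}(X^{m}_{t})=\int_{0}^{h}\bigl[f(X^{m}_{(t+s)-})-f(\varphi_{s}(X^{m}_{t}))\bigr]\,\mathrm{d}s+(M^{m}_{t+h}-M^{m}_{t})+(R^{m}_{t+h}-R^{m}_{t})
\end{equation*}
combined with Gronwall's lemma applied to $h\mapsto\sup_{s\le h}\|X^{m}_{t+s}-\varphi_{s}(X^{m}_{t})\|$ yields
\begin{equation*}
\sup_{0\le h\le T}\|X^{m}_{t+h}-\varphi_{h}(X^{m}_{t})\|\le e^{L_{k}T}\Bigl(\sup_{0\le h\le T}\|M^{m}_{t+h}-M^{m}_{t}\|+V^{m}_{t,t+T}\Bigr).
\end{equation*}

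The variation part is immediate: $V^{m}_{t,t+T}\le V^{m}_{t,\infty}=o(\alpha_{m}(t))$ on $\Gamma_{k,t_{0}}$. For the martingale part, a discretization along the grid $t_{n}=nT$ combined with Doob's $L^{2}$ maximal inequality applied to the martingale $h\mapsto M^{m}_{nT+h}-M^{m}_{nT}$ (after a stopping-time reduction so that the a.s.\ bound on $\langle M^{m}\rangle$ can be pushed inside conditional expectations) gives, for any $\varepsilon>0$, the Chebyshev-type estimate
\begin{equation*}
\mathbb{P}\!\left[\mathbf{1}_{\Gamma_{k,t_{0}}}\sup_{0\le h\le T}\|M^{m}_{nT+h}-M^{m}_{nT}\|\ge\alpha_{m}(nT)\,n^{(1+\varepsilon)/2}\right]\le\frac{4k^{2}}{n^{1+\varepsilon}},
\end{equation*}
a summable tail. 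Borel--Cantelli then yields, a.s.\ on $\Gamma_{k,t_{0}}$, $\sup_{0\le h\le T}\|M^{m}_{nT+h}-M^{m}_{nT}\|\le\alpha_{m}(nT)\,n^{(1+\varepsilon)/2}$ for all large $n$, and a triangle inequality across two consecutive blocks extends the bound to arbitrary large $t$.

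Combining these estimates, a.s.\ on $\Gamma_{k,t_{0}}$,
\begin{equation*}
\sup_{0\le h\le T}\|X^{m}_{t+h}-\varphi_{h}(X^{m}_{t})\|\le C\,\alpha_{m}(t)\,t^{(1+\varepsilon)/2}\quad\text{for all $t$ large enough.}
\end{equation*}
Taking logarithm, dividing by $t$, and sending $t\to\infty$ absorbs the polynomial factor; after the change of variables $s=m^{-1}(t)$ one has
\begin{equation*}
\limsup_{t\to\infty}\frac{\log\alpha_{m}(t)}{t}=\limsup_{s\to\infty}\frac{\log\alpha(s)}{m(s)}\le\lambda
\end{equation*}
by the hypothesis on $\alpha$ and $m$, which is precisely the $\lambda$-APT property on $\Gamma_{k,t_{0}}$. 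Taking the union over $k,t_{0}$ gives the conclusion a.s.\ on $\Gamma$. The main delicate point is the localization: the events $\Gamma_{k,t_{0}}$ are not $(\mathcal{F}^{m}_{t})$-stopping events, so the a.s.-on-$\Gamma$ bound on $\langle M^{m}\rangle_{t,\infty}$ cannot be inserted into a conditional expectation directly; this is repaired by the standard device of stopping $M^{m}$ at the first time $\langle M^{m}\rangle$ exceeds a large constant and then letting the constant grow.
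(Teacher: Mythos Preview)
Your proof is correct and follows essentially the same approach as the paper: localize $\Gamma$, introduce a stopping time so that the a.s.\ bound on $\langle M\rangle_{t,\infty}$ becomes usable inside expectations, then combine Doob's maximal inequality with Borel--Cantelli and a Gronwall estimate. The only cosmetic differences are that the paper bounds the tail supremum $I^{\sigma}_{t}=\sup_{u\ge t}\|M^{\sigma}_{t,u}\|$ directly with an exponential threshold $e^{-\delta t}$ (rather than discretizing block-by-block along $t_{n}=nT$ with a polynomial correction), and defers the Gronwall step entirely to Bena\"im's Proposition~8.3.
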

\begin{proof}
  We essentially follow the proof of Proposition 8.3 in \cite{Benaim99}.

  For $(t_0,k)\in \mathbb{N}^2$, let $\Gamma_{t_0,k}$ be the set of all
  $\omega\in \Gamma$ such that for all $t\ge t_0$,
  $\langle M\rangle_{t,\infty}\le k^2\alpha^2_t$. Then
  $\Gamma=\cup_{t_0,k}\Gamma_{t_0,k}$ and it suffices to prove the lemma
  with $\Gamma$ replaced by $\Gamma_{t_0,k}$.  So we now fix
  $(t_0,k)\in \mathbb{N}^2$. Let
  $\sigma:=\inf\{ t\ge t_0:\; \sup_{s\in [t_0,t]} \frac{\langle
    M\rangle_{s,t}}{\alpha^2_s}>k^2\}$.  Then $\sigma$ is a stopping
  time, $\sigma=\infty$ on $\Gamma_{t_0,k}$, and for $t>t_0$,
  $\langle M^\sigma \rangle_{t,\infty}\le k^2\alpha^2_t$.  Set
  $I^\sigma_t:=\sup_{u\ge
    t}\|M^\sigma_{t,u}\|$.

  By Doob's inequality, it holds that for $t>m(t_0)$ and $\delta>0$,
    $$\mathbb{P}\left[I^{\sigma}_{m^{-1}(t)} > e^{-\delta t}\right]\le e^{2\delta t} \mathbb{E}[\langle M^\sigma\rangle_{m^{-1}(t),\infty}]\le e^{2\delta t} k^2 \alpha^2_m(t).$$
    Let $\varepsilon>0$ and $\delta>0$ be such that
    $\delta+\varepsilon<-\lambda$.  Since
    $\lambda=\limsup_{t\to\infty} \frac{\log(\alpha_m(t))}{t} <0$ it
    holds that
    $\alpha_m(t)=o\big(e^{(\lambda+\varepsilon)t}\big)$. Therefore,
    $\mathbb{P}\left[I^{\sigma}_{m^{-1}(t)} > e^{-\delta
        t}\right]=o\big(e^{2(\delta +\lambda+\varepsilon) t}\big)$. This
    easily implies (using Borel-Cantelli Lemma) that
    $I^{\sigma}_{m^{-1}(t)}=0(e^{-\delta t})$.

    For $(t,T)\in \mathbb{R}_+^2$, set
    \begin{equation}
      \Delta(t,T)=\sup_{h\in [0,T]} \|M^m_{t,t+h}+R^m_{t,t+h}\|. 
    \end{equation}
    On $\Gamma_{t_0,k}$, $\sigma=\infty$ and
    $\Delta(t,T)\le I^\sigma_{m^{-1}(t)} + V^m_{t,\infty}$.  Therefore,
    for all fixed $T$, a.s. on $\Gamma_{t_0,k}$, as $t\to\infty$,
    $\Delta(t,T)= 0(e^{-\delta t}) + o(\alpha_m(t))= 0(e^{-\delta t})$.

    This proves that a.s. on $\Gamma_{t_0,k}$,
    $\limsup_{t\to\infty} \frac{1}{t} \log \Delta(t,T)\le -\delta$. This
    holds for all $-\delta>\lambda$ and so a.s. on $\Gamma_{t_0,k}$,
    $$\limsup_{t\to\infty} \frac{1}{t} \log \Delta(t,T)\le \lambda.$$
    As noticed in \cite{Benaim99}, this concludes the proof of this
    lemma.
  \end{proof}

  Following the proof of Lemma 8.7 in \cite{Benaim99} we prove the
  following lemma (note that Lemma 8.7 in \cite{Benaim99} cannot be
  directly applied since in our setting: $K$ is not positively invariant
  and $K$ does not attracts $\mathcal{N}^*$ exponentially fast in the
  sense of \cite{Benaim99}):
  \begin{lemma}\label{lem:beta-attract}
    a.s. on $\Gamma\cap\{\lim_{t\to\infty}X_t=0\}$,
    $$\limsup_{t\to\infty}\frac{1}{t}\log\big( d(X^m_t,K) \big) \le \beta.$$
  \end{lemma}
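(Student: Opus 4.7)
The plan is to combine the $\lambda$-APT property of $X^m$ established in Lemma~\ref{lem:laptg} with the local exponential attraction of $K$ at rate $\mu$ given in Lemma~\ref{lem:muattract}, following the strategy of Lemma~8.7 in \cite{Benaim99} but adapted to the fact that in our setting $K$ is neither positively invariant nor globally attracting. Fix $\varepsilon\in(0,-\mu)$ small, and let $\mathcal{N}_\varepsilon$ be the neighborhood of $0$ given by Lemma~\ref{lem:muattract}. Choose a horizon $T>0$ and a smaller neighborhood $\mathcal{N}'_\varepsilon\subset\mathcal{N}_\varepsilon$ so that for every $x\in\mathcal{N}'_\varepsilon$ the whole orbit $(\varphi_h(x))_{h\in[0,T]}$ remains in $\mathcal{N}_\varepsilon$; this is possible by continuity of $\varphi$ and $\varphi_h(0)=0$. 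On $\Gamma\cap\{\lim_{t\to\infty}X_t=0\}$ one has $X^m_t\to 0$, so there exists a finite random time $t_0$ such that $X^m_t\in\mathcal{N}'_\varepsilon$ for all $t\ge t_0$, and hence Lemma~\ref{lem:muattract} applies to $X^m_t$ with horizon $T$.

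The one-step bound is then obtained by the triangle inequality applied to $d(\cdot,K)$:
\[
d(X^m_{t+T},K)\le d(X^m_{t+T},\varphi_T(X^m_t))+d(\varphi_T(X^m_t),K)\le C e^{(\lambda+\varepsilon)t}+e^{(\mu+\varepsilon)T}\,d(X^m_t,K),
\]
for some random constant $C>0$ and all $t\ge t_0$, where the first term uses Lemma~\ref{lem:laptg} and the second uses Lemma~\ref{lem:muattract}. Setting $t_n:=t_0+nT$, $a_n:=d(X^m_{t_n},K)$ and $A:=e^{(\mu+\varepsilon)T}\in(0,1)$, induction yields
\[
a_n\le A^n a_0+C e^{(\lambda+\varepsilon)t_0}\sum_{k=0}^{n-1}A^{n-1-k}e^{(\lambda+\varepsilon)kT}.
\]
A case analysis on the sign of $\lambda-\mu$ shows that the geometric sum is bounded, up to a factor linear in $n$, by $e^{nT\max\{\lambda+\varepsilon,\mu+\varepsilon\}}$, so that $a_n\le C'\,n\,e^{(\beta+\varepsilon)nT}$ and therefore $\limsup_{n\to\infty}(\log a_n)/t_n\le\beta+\varepsilon$.

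To pass from discrete times $t_n$ to general $t$, apply the same one-step reasoning on $[t_n,t]\subset[t_n,t_{n+1}]$ to get $d(X^m_t,K)\le e^{(\mu+\varepsilon)(t-t_n)}a_n+C e^{(\lambda+\varepsilon)t_n}\le C''\,e^{(\beta+\varepsilon)t}$ for $t$ large enough, hence $\limsup_{t\to\infty}(1/t)\log d(X^m_t,K)\le\beta+\varepsilon$. Since $\varepsilon>0$ is arbitrary, the claimed inequality follows.

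The main obstacle is that neither Lemma~\ref{lem:laptg} nor Lemma~\ref{lem:muattract} can be used directly: the first gives a bound uniformly in $h\in[0,T]$ but only up to multiplicative noise of order $e^{(\lambda+\varepsilon)t}$, while the second is purely local and requires $\varphi_h(X^m_t)$ to stay inside $\mathcal{N}_\varepsilon$ throughout $[0,T]$. The restriction to the event $\{\lim_{t\to\infty}X_t=0\}$ is crucial to ensure that these neighborhood conditions hold for \emph{all} iterations $n\ge 0$, not merely finitely many, so that the one-step estimate can be chained indefinitely.
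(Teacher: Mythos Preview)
Your proof is correct and follows essentially the same approach as the paper: both combine Lemma~\ref{lem:muattract} with the $\lambda$-APT property via a triangle inequality, iterate along the grid $t_0+nT$, and then interpolate back to continuous time. Your treatment of the geometric recursion is in fact tidier than the paper's (which writes the one-step bound as $v_{k+1}\le \rho^k+\rho v_k$ with a single $\rho$, glossing over the distinction between $\lambda$ and $\mu$), and your explicit case split on the sign of $\lambda-\mu$ makes clear where the $\beta=\max\{\lambda,\mu\}$ and the linear-in-$n$ factor come from.
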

  \begin{proof}
    The proof of this lemma is almost identical to the one of Lemma 8.7
    in \cite{Benaim99}. For completeness, we give a proof of this lemma.
    Let us fix $\varepsilon\in (0,\beta)$ and $T>0$, let
    $\mathcal{N}_\varepsilon$ be the neighborhood introduced in Lemma
    \ref{lem:muattract}. Then there is a neighborhood
    $\mathcal{N}_{\varepsilon,T}\subset \mathcal{N}_\varepsilon$ such that
    $t_\varepsilon(x)\ge T$ for all $x\in \mathcal{N}_{\varepsilon,T}$ (to
    construct such neighborhood it suffices to use that
    $f(x)=O(\|x\|)$). We then have that for all
    $x\in \mathcal{N}_{\varepsilon,T}$,
    \[
      d(\varphi_T(x),K)\le e^{(\mu+\varepsilon)T} d(x,K).
    \]
    Thus a.s. on $\Gamma\cap\{\lim_{t\to\infty}X_t=0\}$, there is $t_0$
    such that for all $t\ge t_0$, $X^m_t\in \mathcal{N}_{\varepsilon,T}$ and
    \[
      d(X^m_{t+T},K)\le
      d(X^m_{t+T},\varphi_T(X^m_t))+d(\varphi_T(X^m_t),K) \le
      e^{(\lambda+\varepsilon)t} + e^{(\mu+\varepsilon)T} d(X^m_t,K).
    \]
    Let $v_k=d(X^m_{kT},K)$, $\rho=e^{(\mu+\varepsilon)T}$ and
    $k_0=[t_0/T]+1$. Then $v_{k+1}\le \rho^k+\rho v_k$ for $k\ge k_0$.
    Hence,
    \[
      v_{k_0+m} \le \rho^m(m\rho^{k_0-1}+v_{k_0})
    \]
    for $m\ge 1$. It follows that
    \[
      \limsup_{k\to\infty} \frac{\log (v_k)}{kT} \le \frac{\log
        (\rho)}{T}\le \beta +\varepsilon.
    \]
    Also for $t\in [kT,(k+1)T]$ and $k\ge k_0$,
    \begin{eqnarray*}
      d(X^m_t,K) &\le& d(\varphi_{t-kT}(X^m_{kT}),X^m_t) + d(\varphi_{t-kT}(X^m_{kT}),K)\\ 
                 &\le& e^{(\lambda+\varepsilon)kT} + e^{(\mu+\varepsilon)(t-kT)}d(X^m_{kT},K)\\
                 &\le& e^{(\lambda+\varepsilon)kT} + v_k.
    \end{eqnarray*}
    Thus
    \[
      \limsup_{t\to\infty} \frac{\log (d(X^m_t,K))}{t} \le \beta
      +\varepsilon.
    \]
    And since $\varepsilon$ is arbitrary we get the desired result.
  \end{proof}

  Lemma \ref{lem:beta-attract} implies that a.s. on
  $\Gamma\cap\{\lim_{t\to\infty}X_t=0\}$, for all $\varepsilon>0$,
  $d(X_t,K)=O\big(e^{(\beta+\varepsilon)m(t)}\big)$. 

\subsubsection{Proof of Theorem \ref{thm:th5}.}
The process $Y^+(=(P^{-1}X)^+)$ satisfies
\begin{equation}
  Y^+_t = Y^+_0 +\int_0^t \gamma_s\mathfrak{h}(Y^+_{s-}) ds +  M^+_t + R^{Y^+}_t
\end{equation}
with $M^+_t:=(P^{-1}M^m_t)^+$ and
$R^{Y^+}_t:=(P^{-1}R^m_t)^++\int_0^t
\gamma_s\left(h^+\big(Y_{s-}\big)-\mathfrak{h}(Y^+_{s-})\right) ds.$

Since $\lim_{t\to\infty}Y^+_t=0$ on the event
$\Gamma\cap\{\lim_{t\to\infty}X_t=0\}$, if Theorem \ref{thm:th4} can be
applied to $Y^+$ and to the event
$\Gamma\cap\{\lim_{t\to\infty}X_t=0\}$, then one has shown that
$\mathbb{P}(\Gamma\cap\{\lim_{t\to\infty}X_t=0\})=0$ and Theorem \ref{thm:th5}
is proved.  We have assumed in Theorem \ref{thm:th5} that condition
\eqref{hyp:jumps} is satisfied by $Y^+$ and that conditions
\eqref{hyp:<M>1} and \eqref{hyp:<M>2} are satisfied a.s. on
$\Gamma\cap\{\lim_{t\to\infty}X_t=0\}$. Therefore, $0$ being a repulsive
equilibrium (i.e. \textit{(i)} of Theorem \ref{thm:th4} is satisfied),
it remains to check that $R^{Y^+}$ satisfies \eqref{hyp:calpha} a.s. on
$\Gamma\cap\{\lim_{t\to\infty}X_t=0\}$.
\begin{lemma}
  $R^{Y^+}$ satisfies \eqref{hyp:calpha} a.s. on
  $\Gamma\cap\{\lim_{t\to\infty}X_t=0\}$.
\end{lemma}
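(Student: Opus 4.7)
The plan is to decompose $R^{Y^+}$ into two pieces and bound each tail variation separately. Writing
\[
R^{Y^+}_t=(P^{-1}R_t)^{+}+\int_{0}^{t}\gamma_{s}\bigl[h^{+}(Y_{s-})-\mathfrak{h}(Y^{+}_{s-})\bigr]\,\mathrm{d}s,
\]
the first term has total variation on $(t,\infty)$ bounded by $\|P^{-1}\|\cdot V(R,(t,\infty))$, which is $o(\alpha_{t})$ a.s.\ on $\Gamma$ by the assumption that $R$ satisfies \eqref{hyp:calpha}. So the work is entirely in controlling the tail of the integral term.

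For the integral, I would rewrite the integrand in the rectified coordinates: using $y^{+}=u^{+}$ and $y^{-}=u^{-}+g(u^{+})$ one has $h^{+}(y^{+},y^{-})=k^{+}(u^{+},u^{-})$ and $\mathfrak{h}(u^{+})=k^{+}(u^{+},0)$, so
\[
\bigl\|h^{+}(Y_{s-})-\mathfrak{h}(Y^{+}_{s-})\bigr\|=\bigl\|k^{+}(u^{+}_{s-},u^{-}_{s-})-k^{+}(u^{+}_{s-},0)\bigr\|\le C_{\nu}\|u^{-}_{s-}\|^{1+\nu}=C_{\nu}\,d(X_{s-},K)^{1+\nu},
\]
by hypothesis \eqref{hyp:h+r}. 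Hence the tail variation of the integral term is at most $C_{\nu}\int_{t}^{\infty}\gamma_{s}\,d(X_{s-},K)^{1+\nu}\,\mathrm{d}s$.

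The main (and essentially only) technical step is to show this last quantity is $o(\alpha_{t})$ a.s.\ on $\Gamma\cap\{\lim_{t\to\infty}X_{t}=0\}$. By Lemma~\ref{lem:beta-attract} (translated back from $m$-time via $X^{m}_{m(s)}=X_{s}$), for every $\varepsilon>0$ there is a finite random time beyond which $d(X_{s},K)\le e^{(\beta+\varepsilon)m(s)}$. Performing the change of variable $u=m(s)$ (with $\mathrm{d}u=\gamma_{s}\,\mathrm{d}s$), and using that $\beta<0$ and $(\beta+\varepsilon)(1+\nu)<0$ for $\varepsilon$ small, one gets
\[
\int_{t}^{\infty}\gamma_{s}\,d(X_{s-},K)^{1+\nu}\,\mathrm{d}s\le C\int_{m(t)}^{\infty}e^{(\beta+\varepsilon)(1+\nu)u}\,\mathrm{d}u=O\!\left(e^{(\beta+\varepsilon)(1+\nu)m(t)}\right).
\]

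To conclude it suffices to verify $e^{(\beta+\varepsilon)(1+\nu)m(t)}=o(\alpha_{t})$. The assumption $\liminf_{t\to\infty}\log(\alpha_{t})/m(t)>\beta(1+\nu)$ provides some strict slack $\delta>0$, i.e.\ $\log(\alpha_{t})\ge (\beta(1+\nu)+\delta)m(t)$ eventually. Choosing $\varepsilon$ small enough that $\varepsilon(1+\nu)<\delta$, one obtains
\[
\log(\alpha_{t})-(\beta+\varepsilon)(1+\nu)m(t)\ge\bigl(\delta-\varepsilon(1+\nu)\bigr)m(t)\longrightarrow+\infty,
\]
which gives $e^{(\beta+\varepsilon)(1+\nu)m(t)}=o(\alpha_{t})$ as desired. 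The subtle point is that $\beta$ may equal $\mu$ rather than $\lambda$, which is precisely why the quantitative attraction rate of Lemma~\ref{lem:beta-attract} (rather than the bare $\lambda$-APT property) is needed; the rest of the argument is routine bookkeeping.
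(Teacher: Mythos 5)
Your proof is correct and follows essentially the same route as the paper: express the integrand via \eqref{hyp:h+r} and the rectified coordinates, invoke Lemma~\ref{lem:beta-attract} to get $d(X_s,K)=O(e^{(\beta+\varepsilon)m(s)})$, change variables $u=m(s)$, and use the $\liminf$ hypothesis to absorb the result into $o(\alpha_t)$. The only difference is that you spell out the $(P^{-1}R)^+$ term and the final $\varepsilon$-versus-$\delta$ bookkeeping, which the paper leaves implicit.
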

\begin{proof}
  Lemma \ref{lem:beta-attract} implies that a.s. on
  $\Gamma\cap\{\lim_{t\to\infty}X_t=0\}$, for all $\varepsilon>0$,
  $d(X_t,K)=\|Y^-_t-g(Y^+_t)\|=O\big(e^{(\beta+\varepsilon)m(t)}\big)$.
  Assumption \eqref{hyp:h+r} then implies that a.s on
  $\Gamma\cap\{\lim_{t\to\infty}X_t=0\}$, for all $\varepsilon>0$,
  \begin{align}
    h^+\big(Y_t\big)
    &\;=\mathfrak{h}\big(Y^+_t\big) + O(\|Y^-_t-g\big(Y^+_t\big)\|^{1+\nu})\\
    &\;=\mathfrak{h}\big(Y^+_t\big) + O(e^{(\beta+\varepsilon)(1+\nu)m(t)})
  \end{align}
  This implies that a.s on $\Gamma\cap\{\lim_{t\to\infty}X_t=0\}$,
  \begin{align*}
    \int_t^\infty \gamma_s\left\|h^+\big(Y_{s-}\big)-\mathfrak{h}(Y^+_{s-})\right\| ds
    &\;= O\left( \int_t^\infty \gamma_se^{(\beta+\varepsilon)(1+\nu)m(s)} ds\right)\\
    &\;= O\left( \int_{m(t)}^\infty e^{(\beta+\varepsilon)(1+\nu)u} du\right)\\
    &\;= O(e^{(\beta+\varepsilon)(1+\nu)m(t)}).
  \end{align*}
  Since
  $\liminf_{t\to\infty}\frac{\log\big(\alpha(t)\big)}{m(t)}>\beta
  (1+\nu)$, taking $\varepsilon$ sufficiently small, we obtain
  \eqref{hyp:calpha}.
\end{proof}

\section{Proofs of the non-convergence theorems for discrete-time
  processes}
\label{sec:discrete}

Let $(X_n)_{n\in \mathbb{N}}$ be a random sequence in $\mathbb{R}^d$,
adapted to a filtration $(\mathcal{F}_n)_{n\in\mathbb{N}}$, such that
\begin{equation}\label{eq:Xn0}
  X_{n+1}-X_n=\gamma_{n+1}G_n + c_{n+1}(\varepsilon_{n+1}+r_{n+1}),
\end{equation}
where $(\gamma_n)_{n\ge 0}$ and $(c_n)_{n\ge 0}$ are non-negative
deterministic sequences, $(\varepsilon_n)_{n\ge 0}$, $(r_n)_{n\ge 0}$
and $(G_n)_{n\ge 0}$ are adapted sequences such that for all $n\ge 0$,
$\mathbb{E}[\varepsilon_{n+1}|\mathcal{F}_n]=0$ and
$\mathbb{E}[\|\varepsilon_{n+1}\|^2|\mathcal{F}_n]<\infty$.  We will assume that
$c_n\ne 0$ infinitely often and that $\sum_n c_n^2<\infty$.

In
\textcolor{black}{Theorem~\ref{thm:th2n}},~\textcolor{black}{Proposition~\ref{thm:th22n}},
\textcolor{black}{Corollary~\ref{thm:th3bd} and
  Theorem~\ref{thm:th4d}}, it is supposed that for some event
\(\Gamma\) and some constant \(a>2\), a.s. on \(\Gamma\),
\(\limsup\mathbb{E}[\|\varepsilon_{n}\|^a|\mathcal{F}_{n-1}]<\infty\). We
thus have that a.s. \(\Gamma=\cup_{n_{0},k}\Gamma_{n_{0},k}\), where
\(\Gamma_{n_{0},k}\) is the set of all \(\omega\in\Gamma\) such that
for all \(n\geq{n_{0}}\),
\(\mathbb{E}[\|\varepsilon_{n}\|^a|\mathcal{F}_{n-1}]\leq{k^{a}}\). It
is then straightforward to check that it suffices to prove these
theorems with \(\Gamma\) replaced by \(\Gamma_{n_{0},k}\). In the
following we will therefore suppose from now on in this section that
\(\Gamma=\Gamma_{n_{0},k}\) for some \((n_{0},k)\in\mathbb{N}^{2}.\)

For $t\ge 0$ and $n=\lfloor t\rfloor$, set
$\mathcal{F}_t=\mathcal{F}_{n}$. Then $(\mathcal{F}_t)_{t\ge 0}$ is a
complete right-continuous filtration. Let us define the càdlàg processes
\(F\), $M$, $R$ and $X$ such that for all $n\ge 0$ and $t\in [n,n+1)$,
\begin{equation}
  F_t=\gamma_{n+1}G_n,\qquad M_t=\sum_{k=1}^n c_{k}\varepsilon_{k}, \qquad R_t=\sum_{k=1}^n c_{k} r_{k}
\end{equation}
and
\begin{align}
    & X_t=X_n+\int_n^t F_s \,\mathrm{d}s.
\end{align}
Then, $X$, $F$, $M$ and $R$ are adapted to $(\mathcal{F}_t)_{t\ge 0}$
and satisfy \eqref{eq:Z} for all $t\ge 0$.  Note also that, with respect
to the filtration $(\mathcal{F}_t)_{t\ge 0}$, $F$ is progressively
measurable and $M$ is a locally square integrable martingale.  Moreover,
for $t\ge 0$,
\begin{align}
  & [M]_t=\sum_{n\le t} c^2_n\|\varepsilon_{n}\|^2\\
  &\langle M\rangle_t = \sum_{n\le t} c^2_n\mathbb{E}[\|\varepsilon_{n}\|^2|\mathcal{F}_{n-1}]\\
  & V(R,(0,t])=\sum_{n\le t} c_{n}\|r_{n}\|.
\end{align}
We also have that $\Delta X_t=0$ if $t\not\in\mathbb{N}^*$ and that
$\Delta X_t=c_{n}(\varepsilon_{n}+r_{n})$ if $t=n\in \mathbb{N}^*$.

Let us also set for $t\ge 0$, $\alpha_t=\sqrt{\sum_{n> t} c_n^2}$ so
that $\alpha:[0,\infty)\to (0,\infty)$ is a non-increasing function, and
\(E_{t}=k^{2}\alpha^{2}_{t}\) if \(t\geq{n_{0}}\) and \(E_{t}=\infty\)
if \(t<n_{0}\). Then it is an easy exercice using Lemma~\ref{lem:E_t}
that a.s. on \(\Gamma\), for all stopping time \(S\),
\begin{equation*}
  \frac{\mathbb{E}[\|\Delta M_S\|^21_{S<\infty}|\mathcal{F}_t]}{\mathbb{P}[S<\infty|\mathcal{F}_t]^b}\leq{E_{t}}
\end{equation*}
and that a.s. on \(\Gamma\), \eqref{hyp:jumps} is satisfied.

Let us finally set \(\tau(t)=t+1\) for all \(t\geq{0}\).

\begin{proof}[Proof of Theorem \ref{thm:th2n}]
  In Theorem \ref{thm:th2n}, we also suppose that a.s. on \(\Gamma\),
  \begin{equation*}
    0<\liminf \mathbb{E}[\|\varepsilon_{n}\|^2|\mathcal{F}_{n-1}] \quad\hbox{ and }\quad  \sum_n \|r_n\|^2 < \infty
  \end{equation*}
  and that there is a finite random variable $T_0$ and $\rho>0$ such
  that a.s. on $\Gamma$, for all $n\ge T_0$,
  \begin{align} \label{hyp:Fn}
    \langle X_{n}, G_n\rangle 1_{\{\|X_{n}\|<2\rho\}} \geq 0. 
  \end{align}

  It is straightforward to check that a.s. on $\Gamma$,
  $\lim_{n\to\infty}c_{n+1}(\varepsilon_{n+1}+r_{n+1})=0$ and therefore
  that a.s. on $\Gamma_0:=\Gamma\cap\{\lim_{n\to\infty}X_n=0\}$,
  $\lim_{n\to\infty}
  \gamma_{n+1}G_n=\lim_{n\to\infty}(X_{n+1}-X_n)=0$. So we can choose
  the random time $T_0$ such that a.s. on $\Gamma_0$, for all
  $n\ge T_0$,
  \begin{align} \label{hyp:Fnb} \langle X_{n}, G_n\rangle
    1_{\{\|X_{n}\|<2\rho\}} \ge 0 \qquad \hbox{and} \qquad
    \gamma_{n+1}\|G_n\|<\rho.
  \end{align}

  In order to apply Theorem \ref{thm:th2} we now verify that a.s. on
  $\Gamma_0$ for all $t\ge T_0$, \eqref{hyp:F1}, \eqref{hyp:<M>1},
  \eqref{hyp:<M>2}, \eqref{hyp:calpha} and \eqref{hyp:jumps} are
  satisfied. We have already checked that \eqref{hyp:jumps} holds
  a.s. on \(\Gamma\). 

  Let us check that condition \eqref{hyp:F1} is satisfied a.s. on
  \(\Gamma_{0}\). If $t\in (n,n+1)$, we have
  $X_{t-}=X_n+(t-n)\gamma_{n+1}G_n$ and $F_t=\gamma_{n+1}G_n$, and so
  \eqref{hyp:Fnb} implies that a.s. on $\Gamma_0$ for almost all $t\ge T_0$,
  $\langle X_{t-},F_t\rangle 1_{\{\|X_{t-}\|< \rho\}}\ge 0$ (using that if
  \(\|X_{t-}\|<\rho\), then
  \(\|X_{n}\|\leq{}\|X_{t-}\|+\gamma_{n+1}\|G_{n}\|<2\rho\)),
  i.e. condition \eqref{hyp:F1} is satisfied.

  For $t\in [n-1,n)$,
  \begin{align*}
    &\langle{M}\rangle_{t,t+1}=\langle M \rangle_{t+1}-\langle M \rangle_t=c^2_n\mathbb{E}[\|\varepsilon_{n}\|^2|\mathcal{F}_{n-1}]\\
    &\alpha^2_t-\alpha^2_{t+1}=c_n^2
  \end{align*}
  and so
  $\liminf \frac{\langle{M}\rangle_{t,t+1}}{\alpha^2_t-\alpha^2_{t+1}}
  = \liminf \mathbb{E}[\|\varepsilon_{n}\|^2|\mathcal{F}_{n-1}] >0$ a.s. on
  $\Gamma_0$.  This proves that, a.s. on $\Gamma_0$, \eqref{hyp:<M>1} is
  satisfied with $\tau(t)=t+1$.  We also have that
  \begin{align*}
    \frac{\langle M \rangle_{t,\infty}}{\alpha^2_t} =
    \frac{\sum_{n>t}c^2_n\mathbb{E}[\|\varepsilon_{n}\|^2|\mathcal{F}_{n-1}]}{\sum_{n>t}c_n^2}
  \end{align*}
  and therefore
  \begin{align*}
    \limsup \frac{\langle M \rangle_{t,\infty}}{\alpha^2_t}
    &\le \limsup \mathbb{E}[\|\varepsilon_{n}\|^2|\mathcal{F}_{n-1}]\\
    &\le \limsup \mathbb{E}[\|\varepsilon_{n}\|^a|\mathcal{F}_{n-1}]^{2/a} < \infty.
  \end{align*}
  This proves \eqref{hyp:<M>2}.

  Applying Cauchy-Schwartz inequality,
  \begin{align*}
    V(R,(t,\infty)) = \sum_{n>t} c_n\|r_{n}\|\le  \alpha_t \sqrt{\sum_{n>t}\|r_{n}\|^2}.
  \end{align*}
  This implies condition \eqref{hyp:calpha}, a.s. on $\Gamma_0$.

  Theorem~\ref{thm:th2} can therefore be applied and this proves Theorem~\ref{thm:th2n}.
\end{proof}

\begin{proof}[Proof of \textcolor{black}{Proposition~\ref{thm:th22n}}]
  In \textcolor{black}{Proposition~\ref{thm:th22n}}, it is supposed that
  \(\gamma_{n}=O(c_{n})\) ant that there are a random variable
  \(T_{0}\), \(\beta>0\), \(\rho>0\) and \(\nu>0\) for
  which~\eqref{hyp:G22n} and~\eqref{hyp:r22n} are satisfied a.s. on
  \(\Gamma\) (for convenience, we replace in this proof \(\rho\) by
  \(2\rho\) in~\eqref{hyp:G22n}).

  It is straightforward to check that a.s. on
  $\Gamma$, $\lim_{n\to\infty}c_{n+1}(\varepsilon_{n+1}+r_{n+1})=0$ and
  therefore that a.s. on
  $\Gamma_0:=\Gamma\cap\{\lim_{n\to\infty}X_n=0\}$,
  $\lim_{n\to\infty}
  \gamma_{n+1}G_n=\lim_{n\to\infty}(X_{n+1}-X_n)=0$. So we can choose
  the random time $T_0$ such that a.s. on $\Gamma_0$, for all
  $n\ge T_0$, \(\beta\gamma_{n+1}\leq{1}\) and
  \begin{align} \label{hyp:G22nb} \langle X_{n}, G_n\rangle
    1_{\{\|X_{n}\|<2\rho\}} \ge \beta\|X_{n}\|^{2}  \qquad \hbox{and} \qquad
    \gamma_{n+1}\|G_n\|<\rho.
  \end{align}

  Set \(\gamma_{t}=\frac{\beta}{2}\gamma_{n+1}\) if \(t\in(n,n+1]\)
  and \(\alpha_{t}\) as it is defined in the proof of
  Theorem~\ref{thm:th2n}. Then (using that \(\gamma_{n}=O(c_{n})\)),
  \(\int_{t}^{\infty}\gamma^{2}_{s}\mathrm{d}s=O(\alpha_{t}^{2})\).
  The definition of the processes \(X\), \(F\), \(M\) and \(R\) is
  slightly modified: we now set
  \(R_{t}=R'_{t}+\int_{0}^{t}r''_{s}\,\mathrm{d}s\) where, for
  \(t\in[n,n+1)\), \(R'_t=\sum_{k=1}^n c_{k} r'_{k}\) and
  \(r''_{t}=c_{n+1}r''_{n+1}\). In order to apply
  \textcolor{black}{Proposition~\ref{thm:th22}}, we now verify that
  a.s. on $\Gamma_0$ for all $t\ge T_0$, \eqref{hyp:<M>1},
  \eqref{hyp:<M>2}, \eqref{hyp:jumps}, \eqref{hyp:F12},
  \eqref{hyp:calpha2} and~\eqref{hyp:calpha3} are satisfied. We have
  already checked that \eqref{hyp:jumps} holds a.s. on \(\Gamma\).
  Conditions~\eqref{hyp:<M>1} and~\eqref{hyp:<M>2} can be checked as
  in the proof of Theorem~\ref{thm:th2n}.

  Let us check that condition \eqref{hyp:F12} is satisfied:
  a.s. on \(\Gamma_{0}\) for all $t\in (n,n+1)$ such that
  \(t\geq{}T_{0}\) and such that \(\|X_{t}\|<\rho\), it holds that
  \(\|X_{n}\|<2\rho\) and since (using in the last inequality  that
  \(\beta\gamma_{n+1}\leq{1}\) and that \(\|X_{t-}\|^{2}\leq{}2(\|X_{n}\|^{2}+\gamma^{2}_{n+1}\|G_{n}\|^{2})\))
  \begin{align*}
    \langle{X_{t-}},F_{t}\rangle
    &= \gamma_{n+1}\langle{X_{n}},G_{n}\rangle +
      \gamma_{n+1}^{2}\|G_{n}\|^{2}\\
    &\geq \beta\gamma_{n+1}\|X_{n}\|^{2} +
      \gamma_{n+1}^{2}\|G_{n}\|^{2}\\
    &\geq \frac{\beta}{2}\gamma_{n+1}\|X_{t-}\|^{2}.
  \end{align*}
  condition \eqref{hyp:F12} is therefore satisfied a.s. on
  \(\Gamma_{0}\). Condition~\eqref{hyp:calpha2} can be verified exactly
  as is verified condition~\eqref{hyp:calpha} in the proof of
  Theorem~\ref{thm:th2n}. The last condition to be verified is
  condition~\eqref{hyp:calpha3}. Condition~\eqref{hyp:r22n} implies that
  for \(t\in[n-1,n)\),
  \begin{equation*}
    r''_{t}=c_{n}r''_{n}=O(\gamma_{n}^{1+\nu})=O(\gamma_{t}^{1+\nu}).
  \end{equation*}
  \textcolor{black}{Proposition~\ref{thm:th22}} can therefore be applied
  and this proves \textcolor{black}{Proposition~\ref{thm:th22n}}.
\end{proof}

\begin{proof}[Proof of \textcolor{black}{Corollary~\ref{thm:th3bd}}]
  This corollary can easily be proved using Theorem~\ref{thm:th2n} as is
  proved \textcolor{black}{Corollary~\ref{thm:th3b}} using Theorem~\ref{thm:th2}. Let us show
  how to prove \textcolor{black}{Corollary~\ref{thm:th3bd} as a corollary of
  Corollary~\ref{thm:th3b}}.
 
  In \textcolor{black}{Corollary~\ref{thm:th3bd}}, it is supposed that \(H_{n}\) is an
  adapted sequence of random matrices converging a.s. on \(\Gamma\)
  towards a repulsive matrix \(H\) and that there is a random variable
  \(T_{0}\) such that a.s. on \(\Gamma\), \(G_{n}=H_{n}X_{n}\) for all
  \(n\geq{T_{0}}\) and that~\eqref{hyp:rn} are satisfied a.s. on
  \(\Gamma\).

  In order to apply \textcolor{black}{Corollary~\ref{thm:th3b}}, we set
  \(\gamma_{t}=\gamma_{n+1}\) for all \(t\in(n,n+1]\). Note that we can
  choose \(T_{0}\) such that a.s. on \(\Gamma\), for all \(t\in(n,n+1)\)
  and \(n\geq{T_{0}}\), the matrix \(I+(t-n)\gamma_{n+1}H_{n}\) is
  invertible. Let us then define a random matrix \(H_{t}\)
  such that a.s. on \(\Gamma\),
  \(H_{t}=H_{n}(I+(t-n)\gamma_{n+1}H_{n})^{-1}\) for all \(t\in(n,n+1)\)
  and \(n\geq{T_{0}}\), so that
  \(\gamma_{t}H_{t}X_{t-}=\gamma_{t}H_{t}(X_{n}+(t-n)\gamma_{n+1}H_{n}X_{n})=\gamma_{n+1}H_{n}X_{n}=F_{t}\).
  
  Then a.s. on \(\Gamma\), \(\lim_{t\to\infty}H_{t}=H\).
  Condition~\eqref{hyp:jumps} has already been checked and
  conditions~\eqref{hyp:<M>1},~\eqref{hyp:<M>2} and~\eqref{hyp:calpha}
  can be verified as in the proof of Theorem~\ref{thm:th2n}.
  
  \textcolor{black}{Corollary~\ref{thm:th3b} can therefore be applied and this proves
  Corollary~\ref{thm:th3bd}.}
\end{proof}

\begin{proof}[Proof of Theorem \ref{thm:th4d}]
  In Theorem \ref{thm:th4d}, $x^*\in \mathbb{R}^d$ is an unstable
  equilibrium of a vector field $f:\mathbb{R}^d\to\mathbb{R}^d$ such
  that $f$ is $C^{1}$ in a convex neighborhood $\mathcal{N}^*$ of $x^*$.
  For all $n\ge 0$, $G_n=f(X_n)$ and it is also supposed that
  a.s. on $\Gamma$,
  \begin{align*}
    0<\liminf \mathbb{E}[\|\varepsilon^+_{n}\|^2|\mathcal{F}_{n-1}] \quad\hbox{ and }\quad \sum_n \|r_n\|^2 < \infty.
  \end{align*} 
  It is set $\alpha(t)=\sqrt{\sum_{n> t} c_n^2}$ and it is also supposed that one of the
  two following conditions is satisfied
  \begin{enumerate}[\itshape (i)]
  \item $x^*$ is repulsive;
  \item $\sum_{n>t}\gamma_{n}^2=O(\alpha(t))$ (which is satisfied if
    $\gamma_n=0(c_n)$) and there is $\nu\in (0,1]$ such that
    $f\in C^{1+\nu}(\mathcal{N}^*)$ and that
    $\sum_{n>t} c_n^{1+\nu} \|\varepsilon^-_n\|^{1+\nu}=o(\alpha_t)$ on
    $\Gamma\cap\{\lim X_n=x^*\}$.
  \end{enumerate}

  In order to apply Theorem~\ref{thm:th4}, we set
  \(\gamma_{t}=\gamma_{\lceil{t}\rceil}\),
  \(M^{+}_{t}=\sum_{k\leq{t}}c_{k}\varepsilon^{+}_{k}\) and
  \(M^{-}_{t}=\sum_{k\leq{t}}c_{k}\varepsilon^{-}_{k}\). Note that for
  \(t\in(n,n+1)\), \(F_{t}=\gamma_{n+1}f(X_{n})\). Setting
  \(\tilde{F}_{t}=\gamma_{t}f(X_{t})\) and
  \(\tilde{R}_{t}=R_{t}+\int_{0}^{t}(F_{s}-\tilde{F}_{s})\,\mathrm{d}s\),
  we get that for all \(t\geq{0}\),
  \begin{equation*}
    X_{t}-X_{0}=\int_{0}^{t}\tilde{F}_{s}\,\mathrm{d}s+M_{t}+\tilde{R}_{t}.
  \end{equation*}

  We now apply Theorem~\ref{thm:th4} with \(F\) and \(R\) replaced by
  \(\tilde{F}\) and \(\tilde{R}\).  As in the proof of
  Theorem~\ref{thm:th2n}, conditions~\eqref{hyp:<M>1}
  and~\eqref{hyp:<M>2} can be verified for
  \(M^{+}\). Condition~\eqref{hyp:jumps} has already been verified. Let
  us verify that condition~\eqref{hyp:calpha} is satisfied by
  \(\tilde{R}\) a.s. on \(\Gamma\cap\{\lim_{n\to\infty}X_{n}=x^{*}\}\). Note that a.s. on
  \(\Gamma\cap\{\lim_{n\to\infty}X_{n}=x^{*}\}\), setting
  \(n=\lfloor{t\rfloor}\),
  \(\|F_{t}-\tilde{F}_{t}\|=\gamma_{n+1}\|f(X_{n})-f(X_{t})\|=o(\gamma_{n+1}^{2})\)
  (using that \(\|X_{t}-X_{n}\|\leq\gamma_{n+1}\|f(X_{n})\|\)).
  
  We therefore have that a.s. on
  \(\Gamma\cap\{\lim_{n\to\infty}X_{t}=x^{*}\}\)
  \begin{align*}
    V(\tilde{R},(t,\infty))
    &\leq \sum_{k>t}c_{k}\|r_{k}\| +
      \int_{t}^{\infty}\|F_{s}-\tilde{F}_{s}\|\,\mathrm{d}s \\
    &\leq \sqrt{\sum_{k>t}c_{k}^{2}}\times\sqrt{\sum_{k>t}\|r_{k}\|^{2}} +
      o\left(\sum_{k>t}\gamma_{k}^{2}\right) \\
    &= o(\alpha_{t})
  \end{align*}
  i.e. condition~\eqref{hyp:calpha} is satisfied by
  \(\tilde{R}\) a.s. on \(\Gamma\cap\{\lim_{n\to\infty}X_{n}=x^{*}\}\).
  
  We now verify that a.s. on \(\Gamma\), 
  \(\langle{M^{-}}\rangle_{t,\tau(t)}=O(\langle{M^{+}}\rangle_{t,\tau(t)})\). This
  easily follows from the fact that (setting \(n=\lfloor{t}\rfloor\))
  a.s. on \(\Gamma\)
  \begin{equation*}
    \langle{M^{-}}\rangle_{t,t+1}=c_{n+1}^{2}\mathbb{E}[\|\varepsilon^{-}_{n+1}\|^{2}|\mathcal{F}_{n}]
    \leq{}c_{n+1}^{2}\mathbb{E}[\|\varepsilon_{n+1}\|^{2}|\mathcal{F}_{n}] =O(c_{n}^{2})=O(\langle{M^{+}}\rangle_{t,t+1})
  \end{equation*}
  since
  \(\langle{M^{+}}\rangle_{t,t+1}=c_{n+1}^{2}\mathbb{E}[\|\varepsilon^{+}_{n+1}\|^{2}|\mathcal{F}_{n}]\)
  and since a.s. on \(\Gamma\),
  \(\liminf\mathbb{E}[\|\varepsilon^{+}_{n}|\mathcal{F}_{n-1}]>0\).

  Finally, we can conclude when \textit{(i)} of Theorem~\ref{thm:th4d}
  is satisfied by using the case \textit{(i)} of
  Theorem~\ref{thm:th4}. When \textit{(ii)} of theorem~\ref{thm:th4d} is
  satisfied, to use the case \textit{(ii)} of
  Theorem~\ref{thm:th4}, it remains to check that \(M\) is purely
  discontinuous (which is straightforward) and that a.s. on \(\Gamma\),
  \(\sum_{s>t}\|\Delta{M^{-}_{s}}\|^{1+\nu}=o(\alpha_{t})\) (which
  follows from the fact that
  \(\sum_{s>t}\|\Delta{M^{-}_{s}}\|^{1+\nu}=\sum_{n>t}c_{n}^{1+\nu}\|{\varepsilon^{-}_{n}}\|^{1+\nu}\)).
\end{proof}

\begin{proof}[Proof of Theorem~\ref{thm:th5d}]
  For this theorem, we have \(G_{n}=f(X_{n})\). As in the proof of
  Theorem~\ref{thm:th4d}, for \(t\geq{0}\), set
  \(\tilde{F}_{t}=\gamma_{t}f(X_{t})\) and
  \(\tilde{R}_{t}=R_{t}+\int_{0}^{t}(F_{s}-\tilde{F}_{})\,\mathrm{d}s\). We
  will apply Theorem~\ref{thm:th5} to the process \(X\), with \(F\) and
  \(R\) replaced by \(\tilde{F}\) and \(\tilde{R}\). It is
  straightforward to check that assumptions (i) and (ii) of
  Theorem~\ref{thm:th5d} imply assumptions (i) and (ii) of
  Theorem~\ref{thm:th5} (with the same constants \(\lambda\) and
  \(\mu\)). To conclude, it remains to check condition (iii) of
  Theorem~\ref{thm:th5}.

  It is straightforward to check that~\eqref{hyp:<M>2} is satisfied
  a.s. on \(\Gamma\), and~ it can be verified that~\eqref{hyp:calpha} is
  satisfied a.s. on \(\Gamma\) exactly as in the proof of
  Theorem~\ref{thm:th4d}. Since
  \(\liminf\frac{\langle{M^{+}}\rangle_{t,t+1}}{\alpha_{t}^{2}-\alpha_{t+1}^{2}}
  = \liminf\mathbb{E}[\|\varepsilon^{+}_{n}\|^{2}|\mathcal{F}_{n-1}]\),
  \eqref{hyp:<M>1} is satisfied by \(M^{+}\) a.s. on \(\Gamma\). The
  fact that~\eqref{hyp:jumps} is satisfied a.s. on \(\Gamma\) (with
  \(M\) replaced by \((P^{-1}M)^{+}\) in~\eqref{eq:DME}) can easily be
  proved using Lemma~\ref{lem:E_t}. Theorem~\ref{thm:th5} can therefore
  be applied and this proves Theorem~\ref{thm:th5d}.
\end{proof}
\section{Examples}
\label{sec:example}
\subsection{Strongly vertex reinforced random walks on complete
  graphs}
\label{sec:VRRW}

We give here a correct proof of Theorem 3.9 in \cite{Benaim13}.  Denote
by $v_n$ the empirical occupation measure at time $n$ of a strongly
reinforced VRRW\footnote{VRRW stands for Vertex Reinforced Random Walk}
on a complete graph with $d$ vertices with reinforcement weight
$\omega(k)=k^ \alpha$, where $\alpha>1$.  Let $v^*\in\mathbb{R}^d$ be
such that $v^*_i\in [0,1]$ and $\sum_i v^*_i=1$.  In \cite{Benaim13}
there is a vector field $f$ defined in a neighborhood $\mathcal{N}^*$ of
$v^*$ by
\begin{equation}
  f_i(v)=-v_i+\frac{v_i^\alpha\sum_j A_{ij}v_j^\alpha}{H(v)}
\end{equation}
where $H(v)=\sum_{i,j} A_{ij} v_i^\alpha v_j^\alpha$ and $A$ is a
symmetric matrix with non-negative entries such that $A_{ij}>0$ if
$i\neq j$ and $\sum_j A_{ij}$ does not depend on $i$.

Theorem 3.9 states that when $v^*$ is an unstable equilibrium for $f$,
then $\mathbb{P}[\lim_{n\to\infty} v_n=v^*]$.  We thus let $v^*$ be an
unstable equilibrium.  In the proof of Theorem 3.9 given in
\cite{Benaim13}[Theorem], there is a process $Z_n$ such that
$\lim_{n\to\infty} \|Z_n-v_n\|=0$ and such that \eqref{eq:Xn} is
satisfied by $Z_n$ with $\gamma_n=c_n=n^{-1}$ and $G_n=f(Z_n)$.  It is
easy to see that if $\mathcal{S}:=\{i:\; v^*_i>0\}$ denotes the support
of $v^*$, then
$K:=\{z\in \mathcal{N}^*:\; z_i=0 \hbox{ if } i\not\in \mathcal{S}\}$ is
the (local) unstable manifold of $f$. Moreover, it can be shown that $K$
attracts a neighborhood of $v^*$ at rate $1$ and it can be checked that
\eqref{hyp:h+r} is satisfied with $\nu=\alpha-1>0$.
In \cite{Benaim13}, it has been shown that $\|r_n\|\le C/n$, that
$\mathbb{E}[\varepsilon_{n+1}|\mathcal{F}_n]=0$ and that on the event
$\{\lim_{n\to\infty} Z_n=v^*\}$,
$$\liminf_{n\to\infty} \left(\mathbb{E}[\|\varepsilon^+_{n+2}\|^2\mathcal{F}_{n+1}]+\mathbb{E}[\|\varepsilon^+_{n+1}\|^2|\mathcal{F}_n]\right)>0.$$ 
Therefore Theorem \ref{thm:th5d} with Remark \ref{rem:excitationk} can
be applied (here $k=2$, $\alpha(t)=\sqrt{\sum_{n\ge t} n^{-2}}$,
$\lambda=\lim_{t\to\infty} \frac{\log(\alpha(t))}{\sum_{k\le t}
  \gamma_k}=-\frac12$, $\mu=-1$, $\beta=-\frac12$, $\nu=\alpha-1$ and we
do have $\beta (1+\nu)=-\frac{1+\nu}{2}<\lambda=-\frac{1}{2}$) and we
have proved that $\mathbb{P}[\lim_{n\to\infty} Z_n=v^*]=0$, which is what is
claimed in Theorem 3.9 in \cite{Benaim13}.

\subsection{Non-backtracking VRRW}
\label{sec:NBVVRW}

We correct here non-convergence theorems stated in \cite{LR18}.  In
Section 3 in \cite{LR18}, Theorem 3.27 states the non-convergence
towards an unstable equilibrium $v^*$ of a vector field $f$ on
$\mathbb{R}^d$, for a sequence of random variable $(v_n)_{n\ge 0}$ in
$\mathbb{R}^d$.  In order to prove this theorem, another sequence of
random variable $(Z_n)_{n\ge 0}$ in $\mathbb{R}^d$ is introduced and is
such that $\lim_{n\to\infty} \|Z_n-v_n\|=0$ and such that it satisfies
\eqref{eq:Xn} with $\gamma_n=c_n=(n+d)^{-n}$ and $G_n=f(Z_n)$.  As in
the previous subsection, it is proved in \cite{LR18} that, for some
$m\ge 0$, on the event $\{\lim_{n\to\infty} v_n=v^*\}$,
$\|r_n\|=O(1/n)$, that $\mathbb{E}[\varepsilon_{n+1}|\mathcal{F}_n]=0$ and that on
the event $\{\lim_{n\to\infty} v_n=v^*\}$,
$$\liminf_{n\to\infty} \mathbb{E}\left[\left.\sum_{q=0}^m\|\varepsilon^+_{n+q+1}\|^2\right|\mathcal{F}_{n}\right]>0.$$
So in order to proof that $\mathbb{P}[\lim_{n\to\infty} v_n=v^*]=0$ (or
equivalently that $\mathbb{P}[\lim_{n\to\infty} Z_n=v^*]=0$) one would like to
be able to apply Theorem \ref{thm:th4d} or Theorem \ref{thm:th5d}
together with remark \ref{rem:excitationk} (taking $k=m+1$).  In
\cite{LR18}, Corollary 3.IV.15 in \cite{Duflo1996} is applied, but as
noticed in the previous sections, the proof of this corollary is
incorrect.

With the hypotheses given in \cite{LR18}[Theorem 3.27], one also only
has that $f$ is Lipschitz in a neighborhood of $v^*$, which is not
sufficient to apply Theorem \ref{thm:th4d} or Theorem \ref{thm:th5d}.

To apply Theorem \ref{thm:th4d}, one would have to assume moreover that
$f$ is $C^1$ in a neighborhood of $v^*$ and $v^*$ is repulsive or that
$f$ is $C^{1+\nu}$ in a neighborhood of $v^*$ for some $\nu\in (0,1]$,
$\sum_{k\ge n} \frac{\|\varepsilon_k^-\|^{1+\nu}}{k^{1+\nu}}=o(n^{-1/2})$ on
$\{\lim_{n\to\infty} v_n=v^*\}$ and $v^*$ is unstable.  Note that in
\cite{LR18}[Section 3.3], it is shown that there is $C<\infty$ such that
for all $n$, $\|\varepsilon_n\|\le C$. So
$\sum_{k\ge n} \frac{\|\varepsilon_k^-\|^{1+\nu}}{k^{1+\nu}}=o(n^{-1/2})$ is
satisfied if $\nu>1/2$.

To apply Theorem \ref{thm:th5d}, one would have to assume moreover that
\begin{itemize}
\item $f$ is $C^1$ in a neighborhood of $v^*$,
\item $v^*$ is an hyperbolic equilibrium, and so that there is $K$ a
  local unstable manifold that attracts a neighborhood of $v^*$ at rate
  $\mu$ for some $\mu<0$,
\item Condition \ref{hyp:h+r} is satisfied for some $\nu>0$.
\end{itemize}
As in the previous subsection, $\lambda=-1/2$, and so Theorem
\ref{thm:th5d} can only be applied if the condition $\mu (1+\nu) <-1/2$
is satisfied.

In \cite{LR18} Theorem 3.27 is applied to prove Theorem 1.2 and the
non-convergence towards an unstable equilibrium for the empirical
occupation measure of a class of non-backtracking VRRWs on complete
graphs (in \cite{LR18}[subsection 4.11.3]) with reinforcement weight
$\omega(k)=k^\alpha$, where $\alpha\ge 1$.  As in the previous
subsection, it can be shown easily that $v^*$ is hyperbolic, that the
local unstable manifold is attracted at rate $\mu=-1$ and that
\eqref{hyp:h+r} is satisfied with $\nu=\alpha-1$.  When $\alpha>1$,
$\nu>0$ and Theorem \ref{thm:th5d} with remark \ref{rem:excitationk} can
be applied.  When $\alpha=1$, $f$ is $C^{1+1}$ in a neighborhood of
$v^*$ and Theorem \ref{thm:th4d} with remark \ref{rem:excitationk} can
be applied.  Therefore Theorem 1.2 remains correct for all
$\alpha\ge 1$.

\noindent \textbf{Acknowledgments:} 
\begin{itemize}
\item This research has been conducted within the Fédération
  Parisienne de Modélisation Mathématique (FP2M)–CNRS FR 2036.
\item   This research has been conducted as part of the project Labex
  MME-DII (ANR11-LBX-0023-01).
\end{itemize}

\printbibliography

\end{document}